\newcommand{\bbbt}{\mathbb{T}}
\newcommand{\be}{\begin{equation}}
	\newcommand{\ee}{\end{equation}}
\newcommand{\bea}{\begin{eqnarray}}
	\newcommand{\eea}{\end{eqnarray}}
\newcommand{\bean}{\begin{eqnarray*}}
	\newcommand{\eean}{\end{eqnarray*}}
\newcommand{\brray}{\begin{array}}
	\newcommand{\erray}{\end{array}}
\newcommand{\biearray}{\begin{IEEEarray}{rCl}}
	\newcommand{\eiearray}{\end{IEEEarray}}
\newcommand{\newsection}[1]{\setcounter{equation}{0}
	\setcounter{definition}{0}
	\section{#1}}
\newtheorem{definition}{Definition}[section]
\newtheorem{theorem}[definition]{Theorem}
\newtheorem{lemma}[definition]{Lemma}
\newtheorem{proposition}[definition]{Proposition}
\newtheorem{corollary}[definition]{Corollary}
\newtheorem{remark}[definition]{Remark}
\newcommand\restr[2]{{
		\left.\kern-\nulldelimiterspace 
		#1 
		\littletaller 
		\right|_{#2} 
}}
\newcommand{\littletaller}{\mathchoice{\vphantom{\big|}}{}{}{}}
\begin{document}
	\author{{\sc Shreema Subhash Bhatt, Bipul Saurabh}}
	\title{ $K$-stability of $C^*$-algebras  generated by isometries and unitaries with twisted commutation relations}
	\maketitle



\maketitle


\begin{abstract}
	In this article, we define a family of $C^*$-algebras that  are generated by a finite set of unitaries and isometries satisfying  certain twisted commutation relations and prove their $K$-stability. This family includes the $C^*$-algebra  of doubly non-commuting isometries and free twist of isometries.
	Next, we consider the $C^*$-algebra  $A_{\mathcal{V}}$ generated by an $n$-tuple of $\mathcal{U}$-twisted  isometries  $\mathcal{V}$ with respect to a fixed  $n\choose 2$-tuple  $\mathcal{U}=\{U_{ij}:1\leq i<j \leq n\}$ of commuting unitaries (see \cite{NarJaySur-2022aa}).
	Identifying any point of the joint spectrum $\sigma(\mathcal{U})$ of the commutative $C^{*}$-algebra generated by $(\{U_{ij}:1\leq i<j \leq n\})$ with a skew-symmetric matrix, we show that the algebra $A_{\mathcal{V}}$
	is $K$-stable under the assumption that $\sigma(\mathcal{U})$ does not contain any degenerate, skew-symmetric matrix.
	Finally, we prove the same result for the $C^*$-algebra  generated by a tuple of free $\mathcal{U}$-twisted  isometries.
\end{abstract}
\textbf{keywords}: Isometries; von Neumann-Wold decomposition; $K$-stability; quasi unitary; noncommutative torus.

\textbf{AMS Subject Class}: 46L85, 46L80, 47L55.

\setcounter{page}{1}

\section{Introduction}
Given a unital $C^*$-algebra $\mathcal{A}$, one can attach two abelian groups $K_0(\mathcal{A})$, and $K_1(\mathcal{A})$ to $\mathcal{A}$.
These invariants played a crucial role in the classification of  purely infinite simple separable amenable unital $C^*$-algebras
which satisfy the Universal Coefficient Theorem (UCT) (see \cite{Phi-2000aa}).   However, they  do not distinguish $\mathcal{A}$, and $\mathcal{K} \otimes \mathcal{A}$, where $\mathcal{K}$ is the $C^*$-algebra of compact operators. One therefore calls them stable $K$-groups.  On the other hand,
there are
homotopy groups $\pi_k(U_m(\mathcal{A}))$ of  the group $U_m(\mathcal{A})$ of $m \times m$ unitary matrices over $\mathcal{A}$, which  are collectively called   non-stable $K$-groups of $\mathcal{A}$.
There is a canonical  inclusion map $i_n$ from $\mathcal{A}$ to $M_n(\mathcal{A})$. By functoriality, this induces a map $(i_n)_*$ from $\pi_k(U_m(\mathcal{A}))$ to $\pi_k(U_m(M_n(\mathcal{A})))$. We say that a $C^*$-algebra $\mathcal{A}$ is $K$-stable if  $(i_n)_*$ is an isomorphism for all $n \in \mathbb{N}$. For a $K$-stable $C^*$-algebra $\mathcal{A}$, $\pi_k(U_m(\mathcal{A}))$ is canonically isomorphic to $K_0(\mathcal{A})$ for $k$ even, and for $k$ odd, it is the same as $K_1(\mathcal{A})$.   This property comes in handy in many situations where the direct computation of non-stable $K$-groups is difficult, which is often the case.  Even for the algebra of complex numbers, these groups are unknown to date. The first instance of computation of non-stable $K$-groups can be traced back to
Rieffel (\cite{Rie-1987aa}), who  computed these groups for irrational non-commutative $m$-torus $\mathcal{A}_{\theta}$,  and established  its $K$-stability.
By introducing the notion of a quasi unitary, Thomsen (\cite{Tho-1991aa}) extended the  non-stable $K$-theory to  nonunital $C^*$-algebras. He put the non stable $K$-theory under the framework of homology theory, which makes the computation of these groups more tractable. Under some mild assumptions on a locally compact Hausdorff space $X$, Seth and Vaidynathan (\cite{SetVai-2020aa}) showed that a continuous $C_0(X)$-algebra is $K$-stable if all of its fibers are $K$-stable.  %

Isometries play central roles in both Operator theory and Operator algebra.
The classical von-Neumann Wold Decomposition Theorem provides a fundamental understanding of an isometry  on separable Hilbert spaces. It states that up to unitarily equivalence, such  an isometry is either a shift, or a unitary, or a direct sum of a shift and a unitary. Using this, Coburn (\cite{Cob-1967aa}) classified all $C^*$-algebras which are generated by an isometry.
Later,  Berger, Coburn, and Lebow  (\cite{BerCobLeb-1978aa}) studied  the representation theory of the $C^*$-algebra generated by a commuting  tuple isometries acting on a separable Hilbert space, and under some additional hypothesis, they identified all Fredholm operators in such a $C^*$-algebra.   Many $C^*$-algebras which are   generated by a tuple of isometries exhibiting certain twisted commutation relations have been investigated  (see \cite{Rie-1987aa}, \cite{Mor-2013aa}, \cite{MarPau-2016aa}, \cite{NarJaySur-2022aa} and references therein). In this paper,  we take up certain families of
$C^*$-algebras whose homomorphic avatars encompass all the examples studied in these articles. We discuss the general form of a representation of such noncommutative spaces,   explore their topological structure,  and  prove their $K$-stability. One of our main motivations  is to study nontrivial geometries of these spaces in the sense of Connes (\cite{Con-1994aa}), and this article is a step forward. To prove that a geometrical data (a spectral triple) is nontrivial, one needs to pair it with  $K$-groups, which we are trying to compute here. Moreover, $K$-stabilty allows us to take unitaries or projections in the algebra itself rather than going into matrix algebra, where pairing may be more difficult.

Here is a brief outline of the contents of this paper. To that end, it is necessary to first define certain families of $C^*$-algebras. Let $m,n \in \mathbb{Z}_+$ and let $\Theta=\{\theta_{ij}: 1\leq i<j \leq m+n\}$ be an $m+n\choose 2$-tuple of scalars. Let  $B_{\Theta}^{m,n}$ be the universal $C^*$-algebra generated by $m$ unitaries $s_1, s_2, \cdots s_m$, and $n$ isometries $s_{m+1}, \cdots  ,s_{m+n}$ such that  $s_is_j=e^{2\pi\iota \theta_{ij}}s_js_i,\mbox{ for } 1\leq i<j \leq m+n$. Similarly, one defines $C_{\Theta}^{m,n}$ by imposing a stronger commutation  relation, namely $s_i^*s_j=e^{-2\pi\iota\theta_{ij}}s_js_i^*, \mbox{ for } 1\leq i<j \leq m+n$.  In the next section, we review representation theory of $C_{\Theta}^{m,n}$  by invoking the von Neumann-Wold decomposition proved in \cite{NarJaySur-2022aa}. We show that $C_{\Theta}^{m,n}$ has a unique nontrivial minimal ideal.  Moreover, we embed  $C_{\Theta}^{m,n}$ faithfully  in the $C^*$-algebra of bounded linear operators acting on a Hilbert space.
In section 3, we produce a chain of short exact sequences of $C^*$-algebras associated to $C_{\Theta}^{m,n}$, and  compute $K$-groups of $C_{\Theta}^{m,n}$ with explicit generators.
Employing the Five  lemma of homology theory and a result of \cite{Tho-1991aa}, we  prove  $K$-stability of any closed ideal and any homomorphic image of $C_{\Theta}^{m,n}$.  As a consequence, we get  their non-stable $K$-groups as well.  Next, we prove that they are in the bootstrap category $\mathcal{N}$, hence satisfy the Universal Co-efficient Theorem (abbreviated as UCT).

Section $4$ can be looked upon as the heart of the present paper. It deals with a family of   more complicated $C^*$-algebras, namely $B_{\Theta}^{m,n}$.  These $C^*$-algebras are not even exact.   Weber \cite{Mor-2013aa} described all representations  of $B_{\Theta}^{0,n}$ for $n=2$. However, for $n>2$, its representation theory is not known.  We first  describe a general form of  a  representation, say $\pi$, of $B_{\Theta}^{m,n}$ in terms of certain parameters. Using this and a truncation technique, we prove that any homomorphic image of the ideal $J$ generated by the defect projection of the isometry $s_1s_2\cdots s_{m+n}$ is stable.  Exploiting the universal properties of $B_{\Theta}^{m,n}$ and irrational noncommtative torus, we produce a  short exact sequence of $C^*$-algebras whose middle $C^*$-algebra is  $\pi(B_{\Theta}^{m,n})$, end $C^*$-algebra is noncommutative torus, and the associated kernel is $J$.     Invoking the Five lemma, we prove $K$-stability of any homomorphic image of $B_{\Theta}^{m,n}$. As a consequence,  we get $K$-stability of $B_{\Theta}^{m,n}$ as well as  its closed ideals.

In  section $5$, we take  a $n \choose 2$-tuple $\mathcal{U}=\{U_{ij}\}_{1\leq  i<j\leq n}$ of  commuting unitaries  with joint spectrum $X \subset \bbbt^{n \choose 2}$.
Consider  the $C^*$-algebra $A$ generated by an $n$-tuple of $\mathcal{U}$-twisted isometries.
We first establish $A$ as a continuous $C(X)$-algebra with fiber isomorphic to a homomorphic image of $C_{\Theta}^{0,n}$.  Under the assumption that $X$ does not contain any degenerate skew-symmetric matrix, we show that $A$ is $K$-stable.  Finally, we prove the $K$-stability  for the
$C^*$-algebra generated by an $n$-tuple of free $\mathcal{U}$-twisted isometries. In the last section, we discuss some further directions for investigation.

Throughout the paper, all algebras and Hilbert spaces are assumed to be separable and defined over the field $\mathbb{C}$.  Let $\mathbb{N}_0=\mathbb{N} \cup \{0\}$.
The set  $\{e_{n}:n\in \mathbb{N}_0\}$ denotes the standard orthonormal basis for the Hilbert space $\ell^{2}(\mathbb{N}_0)$. The letter $p_{ij}$ denotes the rank one operator mapping $e_i$ to $e_j$. We denote $p_{00}$ by $p$. The letter $S$ denotes the unilateral shift $e_{n}\mapsto e_{n-1}$ on $\ell^{2}(\mathbb{N}_0)$. The number operator $e_{n}\mapsto ne_{n}$ on $\ell^{2}(\mathbb{N}_0)$ is denoted by $N$. The closed ideal of an algebra generated by elements $a_{1},\cdots,a_{n}$ is denoted by $\langle a_{1},\cdots,a_{n}\rangle$. The Toeplitz algebra generated by the unilateral shift is denoted by $\mathcal{T}$. The symbol $\mathcal{K}$ is reserved for the algebra of compact operators.
By the symbol $\overrightarrow{\prod_{j=1}^{n}}s_{i}$, we mean $s_{1}\cdots s_{n}$.
The parameter  $\Theta=\{\theta_{ij} \in \mathbb{R}:1\leq i<j\leq n\}$ denote a $n \choose 2$-tuple of real numbers. Let $M_{\Theta}$ be the associated $n\times n$ skew-symmetric matrix such that the $ij$-th entry of  $M_{\Theta}$ is $\theta_{ij}$ for $i<j$. Let $\bigwedge_n$ be the set of $n \times n$ nondegenerate  skew-symmetric matrices (see \cite{Phi-2006aa}).
For any subset $I$ of  $\{1,2,\cdots ,n\}$, define
$\Theta_I=\{\theta_{ij}: i<j, i,j \in I\}$. For $1\leq m \leq n$,
$$\Theta_{[m]}=\{\theta_{ij}:   1 \leq i< j \leq m\}\,  \mbox{ and } \Theta_{[\hat{m}]}=\{\theta_{ij}: i<j,\,   i \neq m, j \neq m\}.$$

A word of caution: the number $n$
may vary in different cases, and consequently, the sizes of the associated matrices and tuples may also change. However, the value of $n$ will always be clear from the context.
\newsection{Tensor twist of  isometries}
In this section, we give a full description of irreducible representations of the $C^*$-algebras $C_{\Theta}^{m,n}$. All these results can be derived from (\cite{MarPau-2016aa}, \cite{Mor-2013aa},  \cite{NarJaySur-2022aa}).  However, to make the paper self-contained, we provide a brief sketch of its proof.

\begin{definition} \label{isometry}
	Set $C^{1,0}=C(\bbbt)$ and $C^{0,1}=\mathcal{T}$. For $m+n>1$ and  $\Theta=\{\theta_{ij}:1 \leq i <j \leq m+n\}$,  define $C_{\Theta}^{m,n}$ to be the universal $C^*$-algebra generated by
	$s_1, s_2, \cdots s_{m+n}$ satisfying the following relations;
	\begin{IEEEeqnarray*}{rCll}
		s_i^*s_j&=&e^{-2\pi \mathrm{i} \theta_{ij} }s_js_i^*,\,\,& \mbox{ if } 1\leq i<j \leq m+n;\\
		s_i^*s_i&=& 1,\,\, & \mbox{ if } 1\leq i\leq m+n;\\
		s_is_i^*&=&1, \,\, & \mbox{ if }  1\leq i \leq m.
	\end{IEEEeqnarray*}
\end{definition}

\begin{remark} \label{remark1}
	Note that
	\begin{enumerate}[(i)]
		\item The $C^*$-algebra $C_{\Theta}^{m+n,0}$ is the noncommtative $(m+n)$-torus. We write $\mathcal{A}_{\Theta}^{m+n}$ for $C_{\Theta}^{m+n,0}$.
		\item Following \cite{Mor-2013aa}, we call the $C^*$-algebra $C_{\Theta}^{0,n}$ the universal $C^*$-algebra generated by a tensor twist of $n$ isometries.
		We denote $C_{\Theta}^{0,n}$ by $C_{\Theta}^{n}$.
		
		\item There is a chain of  canonical maps $\beta_l$, $1\leq l \
		\leq m+n$;
		\[
		C_{\Theta}^{0,m+n} \xrightarrow{ \beta_0 } C_{\Theta}^{1,m+n-1} \xrightarrow{\beta_1} C_{\Theta}^{2,m+n-2} \cdots \cdots
		\xrightarrow{\beta_{m+n-1}} C_{\Theta}^{m+n,0}.
		\]
		mapping the canonical generators of $C_{\Theta}^{l-1,m+n-l+1}$ to the canonical generators of $C_{\Theta}^{l,m+n-l}$.
		\item The ordering of the generator and the paramater $\Theta$ are related as follows.
		
		If we change the order of the generators by a permutation $P$ then the associated skew symmetric matrix will be $PM_{\Theta}P$, whose strictly upper triangular entries will be   the   replacement for the parameter $\Theta$.
	\end{enumerate}
\end{remark}
\begin{remark} We will denote the standard generators $s_i$ of $C_{\Theta}^{m,n}$ by $s_i^{m,n}$ whenever there is a possibility of confusion.
\end{remark}

\noindent  We will now describe all irreducible representations of $C_{\Theta}^{m,n}$. Denote by  $\mathcal{A}^{m}_{\Theta}$  the $m$-dimensional rotation algebra  with parameter $\Theta$ if $m >1$ and $C(\mathbb{T})$ if $m=1$. 	Fix $m,n \in \mathbb{N}_0$ such that $m+n\geq 1$,
define   $$\Sigma_{m,n}=\{I\subset \{1,2,\cdots ,m+n\}: \{1,2,\cdots m\} \subset I\} \mbox{ and } \Theta_I=\{\theta_{ij}:1 \leq i<j \leq m+n, i,j\in I\}.$$
Fix $I=\{i_1<i_2<\cdots  < i_r\} \in \Sigma_{m+n}$. Let $I^c=\{j_1<j_2<\cdots < j_s\}$. 	Let $\rho:\mathcal{A}_{\Theta_I}^{|I|} \rightarrow \mathcal{L}(K)$ be a unital representation. Take
\[
\mathcal{H}^{I}= K
\otimes \underbrace{\ell^2(\mathbb{N}_0)\otimes \ell^2(\mathbb{N}_0)\otimes \cdots \otimes\ell^2(\mathbb{N}_0)}_{m+n-|I| \mbox{ copies }}.
\]
Define a map $\pi_{(I,\rho)}$ of $C_{\Theta}^{m,n}$ as follows:
\begin{IEEEeqnarray*}{rCll}
	\pi_{(I,\rho)} :C_{\Theta}^{m,n} &\rightarrow &\mathcal{L}(\mathcal{H}^{I})\\
	s_{j_l} &\mapsto&
	1 \otimes 1^{\otimes^{l-1}}\otimes S^* \otimes e^{2\pi \textrm{i} \theta_{j_lj_{l+1}} N}\otimes  \cdots \otimes e^{2\pi \textrm{i} \theta_{j_lj_{s}} N}, &\quad \mbox{ for } 1 \leq l \leq s, \\
	s_{i_l} &\mapsto&
	\pi(s_{i_l}) \otimes \lambda_{i_l,j_1}\otimes  \lambda_{i_l,j_2} \otimes  \cdots \otimes  \lambda_{i_l,j_s}&\quad \mbox{ for } 1 \leq l \leq r,
\end{IEEEeqnarray*}
where $\lambda_{i_l,j_k}$ is  $e^{-2\pi \textrm{i} \theta_{i_l,j_k} N} $ if $i_l > j_k$ and $e^{2\pi \textrm{i} \theta_{i_l,j_k} N}$  if $i_l < j_k$. Since  $	\{\pi_{(I,\rho)}(s_i)\}_{1\leq i \leq m+n}$ satisfy the defining relations of $C_{\Theta}^{m,n}$, it follows that 	$\pi_{(I,\rho)}$  is a representation of $C_{\Theta}^{m,n}$. Also,  if $\rho$ is irreducible then  $\pi_{(I,\rho)}$ is also  irreducible  as by taking action of  appropriate operators and using irreducibility of $\rho$, one can show that any invariant subspace of $\mathcal{H}^I$ contains  the subspace spanned by $\{h \otimes e_0\otimes \cdots e_0: h\in K\}$.  Moreover, if $\rho$ and $\rho^{\prime}$ are unitarily equivalent then so are  $\pi_{(I,\rho)}$ and  $\pi_{(I,\rho^{\prime})}$.
In what follows, we will give a sketch of the proof that these  irreducible representations exhaust the set of all irreducible representations of $C_{\Theta}^{m,n}$ up to unitary equivalence. We refer the reader to \cite{NarJaySur-2022aa} for more details. 	

\begin{theorem}   \label{representations}
	The set $\{\pi_{(I,\rho)}: I \subset \Sigma_{m,n}, \rho \in \widehat{\mathcal{A}_{\Theta_I}^{|I|}}\}$ gives all irreducible representations of $C_{\Theta}^{m,n}$ upto unitarily equivalence.
\end{theorem}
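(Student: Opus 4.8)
\emph{Proof plan.}
The representations $\pi_{(I,\rho)}$, and the fact that they are irreducible when $\rho$ is, have already been dealt with; what is left is to show that every irreducible representation of $C_{\Theta}^{m,n}$ is unitarily equivalent to one of them. The plan is to reduce first to the purely isometric algebra and then to run the von Neumann--Wold machinery of \cite{NarJaySur-2022aa}. For the reduction, note that $C_{\Theta}^{m,n}$ is the quotient of $C_{\Theta}^{0,m+n}$ by the closed ideal generated by $\{1-s_is_i^{*}:1\leq i\leq m\}$ --- equivalently, the target of the composite $\beta_{m-1}\circ\cdots\circ\beta_{0}$ of Remark \ref{remark1}(iii) --- so the irreducible representations of $C_{\Theta}^{m,n}$ are exactly those irreducible representations $\pi$ of $C_{\Theta}^{0,m+n}$ for which $\pi(s_i)$ is unitary for $1\leq i\leq m$. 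Since in the formula defining $\pi_{(I,\rho)}$ the operator $\pi_{(I,\rho)}(s_i)$ carries a tensor factor equal to the non-unitary shift $S^{*}$ precisely when $i\notin I$, it is unitary iff $i\in I$; hence the statement for $C_{\Theta}^{0,m+n}$, restricted to those $I$ with $\{1,\dots,m\}\subseteq I$, i.e.\ $I\in\Sigma_{m,n}$, yields the statement for $C_{\Theta}^{m,n}$. So from now on I take $m=0$, write $N=m+n$, and let $\pi\colon C_{\Theta}^{0,N}\to\mathcal L(\mathcal H)$ be irreducible, $V_i:=\pi(s_i)$.

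The next step is to invoke the von Neumann--Wold decomposition of \cite{NarJaySur-2022aa} for the $N$-tuple $(V_1,\dots,V_N)$: it provides pairwise orthogonal projections $\{P_I:I\subseteq\{1,\dots,N\}\}$ in the von Neumann algebra generated by the $V_i$, with $\sum_I P_I=1$, such that on each $P_I\mathcal H$ the isometry $V_i$ is a pure shift for $i\notin I$ and a unitary for $i\in I$. Because each $V_j$ transforms every defect projection $1-V_iV_i^{*}$ into itself up to a scalar phase --- a direct consequence of the relation $s_i^{*}s_j=e^{-2\pi\mathrm{i}\theta_{ij}}s_js_i^{*}$ --- the $P_I$ commute with all the $V_i$ and so are central in $\pi(C_{\Theta}^{0,N})''$; irreducibility then forces $P_I=1$ for exactly one $I$, say $I=\{i_1<\dots<i_r\}$ with complement $I^{c}=\{j_1<\dots<j_s\}$.

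It then remains to identify $\pi$ on $\mathcal H=P_I\mathcal H$. Here $(V_{j_1},\dots,V_{j_s})$ is a tuple of pure isometries that pairwise twist-commute, and the simultaneous Wold model of \cite{NarJaySur-2022aa} --- whose common wandering subspace is $K:=\bigcap_{l}\ker V_{j_l}^{*}$ --- gives a unitary $\mathcal H\cong K\otimes\ell^{2}(\mathbb N_0)^{\otimes s}$ taking $V_{j_l}$ to $1\otimes 1^{\otimes(l-1)}\otimes S^{*}\otimes e^{2\pi\mathrm{i}\theta_{j_lj_{l+1}}N}\otimes\cdots\otimes e^{2\pi\mathrm{i}\theta_{j_lj_{s}}N}$. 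Feeding this into the relations $V_{j_l}^{*}V_i=e^{\mp2\pi\mathrm{i}\theta_{j_li}}V_iV_{j_l}^{*}$ --- and, since each $V_i$ with $i\in I$ is unitary, also into $V_iV_{j_l}^{*}=e^{\pm2\pi\mathrm{i}\theta_{ij_l}}V_{j_l}^{*}V_i$ --- a short computation in matrix units (showing that $K\otimes e_0^{\otimes s}$ is reducing for each $V_i$, $i\in I$, and that the off-diagonal blocks in every shift coordinate vanish) forces $V_i=\rho(s_i)\otimes\lambda_{i,j_1}\otimes\cdots\otimes\lambda_{i,j_s}$ for $i\in I$, with $\lambda_{i,j_k}=e^{-2\pi\mathrm{i}\theta_{i,j_k}N}$ if $i>j_k$ and $e^{2\pi\mathrm{i}\theta_{i,j_k}N}$ if $i<j_k$, and $\rho(s_i):=V_i|_{K}$. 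The $\rho(s_i)$, $i\in I$, are unitaries satisfying the twisted relations, which --- using unitarity to upgrade these to $\rho(s_i)\rho(s_{i'})=e^{2\pi\mathrm{i}\theta_{ii'}}\rho(s_{i'})\rho(s_i)$ --- makes $\rho$ a representation of $\mathcal A_{\Theta_I}^{|I|}$ on $K$, and $\rho$ is irreducible since any reducing subspace $K_0\subseteq K$ for $\rho$ would make $K_0\otimes\ell^{2}(\mathbb N_0)^{\otimes s}$ reducing for $\pi$. This gives $\pi\cong\pi_{(I,\rho)}$, as required.

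I expect the genuinely hard part to be the two structural facts imported from \cite{NarJaySur-2022aa}: the existence of the central partition $\{P_I\}$ and the explicit simultaneous Wold model of the pure component. The first is delicate precisely because two generators of $C_{\Theta}^{0,N}$ only satisfy the \emph{weak} twisted relation $s_i^{*}s_j=e^{-2\pi\mathrm{i}\theta_{ij}}s_js_i^{*}$ (and not $s_is_j=e^{2\pi\mathrm{i}\theta_{ij}}s_js_i$), so the one-variable Wold subspaces of a single $V_i$ need not reduce the whole algebra and one is forced to decompose all the isometries simultaneously. The remaining ingredients --- the reduction to $m=0$, the matrix-unit computation pinning down the $V_i$ with $i\in I$, and the descent of $\rho$ to the rotation algebra --- are routine, the only care needed being the bookkeeping of the phase signs prescribed by Remark \ref{remark1}(iv).
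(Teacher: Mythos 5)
Your proposal follows essentially the same route as the paper: both invoke the von Neumann--Wold decomposition of Theorem 3.6 in \cite{NarJaySur-2022aa} for the tuple $(\pi(s_1),\dots,\pi(s_{m+n}))$, use irreducibility to force exactly one summand $\mathcal{H}_I$ to survive, restrict to the joint wandering subspace to obtain an irreducible representation $\rho$ of $\mathcal{A}_{\Theta_I}^{|I|}$, and identify $\pi$ with $\pi_{(I,\rho)}$ via the resulting tensor-product model. Your preliminary reduction to the case $m=0$ and the more explicit matrix-unit computation pinning down $\pi(s_i)$ for $i\in I$ are minor organizational additions, not a different argument.
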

\begin{proof}
	It suffices to show that any irreducible representation  of $C_{\Theta}^{m,n}$ is unitarily equivalent to  $\pi_{(I,\rho)}$ for some $I \subset \Sigma_{m,n}$ and $ \rho \in \widehat{\mathcal{A}_{\Theta_I}^{|I|}}$. For that, take  $\pi$  to be an irreducible  representation of $C_{\Theta}^{m,n}$ acting on the Hilbert space $\mathcal{H}$.  Let $T_i=\pi(s_i)$ for $1 \leq i \leq m+n$.  Then it follows from Theorem $3.6$ \cite{NarJaySur-2022aa} that the tuple $(T_1,T_2, \cdots T_{m+n})$ admits the von-Neumann Wold decomposition. Therefore,  we have
	\begin{enumerate}[(i)]
		\item $\mathcal{H}=\oplus_{I \subset \Sigma_{m,n}}\mathcal{H}_I,$
		\item $	\restr{T_j}{\mathcal{H}_I}$  is a shift if $ j \notin I$, and $	\restr{T_j}{\mathcal{H}_I}$  is a unitary if $ j \in I$,
		\item $\mathcal{H}_I$ are reducing subspace of $\pi$ for $I \subset \Sigma_{m,n}$,
		\item $\mathcal{H}_I= \oplus_{\ell \in \mathbb{N}_0^{m+n-|I|}} \textbf{T}_{I^{c}}^{\ell}V_{I^c}$,
		where $V_{I^c}=\cap_{\ell \in \mathbb{N}_0^{|I|}}\,\textbf{T}_{I^{c}}^{\ell}(\cap_{j\notin I} \ker T_j^*)$ (see Theorem $3.6$ \cite{NarJaySur-2022aa}).
	\end{enumerate}
	Since $\pi$ is irreducible, there exists $I$ such that $\mathcal{H}_I $ is nontrivial, and for $I^{\prime} \neq I$, one has $\mathcal{H}_{I^{\prime}}=\{0\}$.  Moreover, using the commutation relations, it follows that  $V_{I^c}$ is an invariant subspace for $\{s_i:i\in I\}$.
	Let $C_{I}$ be the $C^*$-algebra generated by $\{T_i:i\in I\}$.  It is not difficult to verify that the generators $\{T_i\}_{i \in I}$   satisfy the defining   commutation relations of  $\mathcal{A}_{\Theta_{I}}^{|I|}$ and hence there exists a surjective homomorphism  from $\mathcal{A}_{\Theta_{I}}^{|I|}$ to $C_I$.  Using this, one can  define
	$$\rho:\mathcal{A}_{\Theta_{I}}^{|I|}\rightarrow C_I \rightarrow \mathcal{L}(V_{I^c}); \quad  s_i \mapsto T_i \mapsto  \restr{T_i}{V_{I^c}}=\restr{\pi(s_i)}{V_{I^c}}, \quad \mbox{ for }  i \in I.$$
	Then $\rho$ is an irreducible  representation of $\mathcal{A}_{\Theta_{I}}^{|I|}$. Moreover, from part $(iv)$, one can see that
	$$ \mathcal{H}_I\cong V_{I^c} \otimes \underbrace{\ell^2(\mathbb{N}_0)\otimes \ell^2(\mathbb{N}_0)\otimes \cdots \otimes\ell^2(\mathbb{N}_0)}_{m+n-|I| \mbox{ copies }}.$$
	Using these facts,  it is straightforward to see that $\pi$ is unitarily equivalent to $\pi_{I,\rho}$.
\end{proof}

\begin{proposition} \label{injective} Let $I \in \Sigma_{m,n}$.  Then the  map $\Phi_I: \mathcal{A}_{\Theta_{I}}^{|I|} \rightarrow C_{\Theta}^{|I|,m+n-|I|}$ sending $s_i^{m,0} \mapsto s_i^{|I|,m+n-|I|}$ is an injective homomorphism.
\end{proposition}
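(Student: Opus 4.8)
The plan is to obtain $\Phi_I$ from the universal property of the noncommutative torus, and then to prove injectivity by compressing a suitable representation of $C_{\Theta}^{|I|,m+n-|I|}$ onto a one-dimensional ``vacuum'' factor on which it reduces to a faithful representation of $\mathcal{A}_{\Theta_I}^{|I|}$.

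First I would invoke Remark \ref{remark1}(iv) to relabel the canonical generators so that $I=\{1,\dots,p\}$ with $p=|I|$; then the first $p$ generators $t_1,\dots,t_p$ of $C_{\Theta}^{p,\,m+n-p}$ are unitaries, and $t_i^*t_j=e^{-2\pi\mathrm{i}\theta_{ij}}t_jt_i^*$ together with $t_it_i^*=t_i^*t_i=1$ gives, after multiplying on the left by $t_i$ and then on the right by $t_i$, the relation $t_it_j=e^{2\pi\mathrm{i}\theta_{ij}}t_jt_i$ for $1\le i<j\le p$. Thus $t_1,\dots,t_p$ satisfy exactly the defining relations of $\mathcal{A}_{\Theta_I}^{p}$, so the universal property produces a $*$-homomorphism $\Phi_I$ with $\Phi_I(s_i)=t_i$; being a $*$-homomorphism it is contractive, so it only remains to prove the reverse inequality $\|\Phi_I(a)\|\ge\|a\|$.

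For that I would pick any faithful unital representation $\rho\colon\mathcal{A}_{\Theta_I}^{p}\to\mathcal{L}(K)$ (for instance the universal one) and feed it into the representation $\pi:=\pi_{(J,\rho)}$ of $C_{\Theta}^{p,\,m+n-p}$ from Theorem \ref{representations}, where $J=\{1,\dots,p\}$ is the least element of $\Sigma_{p,\,m+n-p}$; it acts on $\mathcal{H}=K\otimes\ell^2(\mathbb{N}_0)^{\otimes(m+n-p)}$. By the explicit formula for $\pi_{(J,\rho)}$, each $t_i$ (for $1\le i\le p$) maps to $\rho(s_i)\otimes D_i$ with $D_i$ a tensor product of operators of the form $e^{\pm2\pi\mathrm{i}\theta N}$, so $D_i$ and $D_i^*$ both fix $\xi_0:=e_0\otimes\cdots\otimes e_0$ (as $Ne_0=0$). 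Hence $K\otimes\mathbb{C}\xi_0\cong K$ is invariant under $\pi\circ\Phi_I$, and on it $\pi(\Phi_I(a))$ acts exactly as $\rho(a)$ for every $a$ (one checks this on words in the $t_i$ and $t_i^*$ and passes to the limit). Compressing by the projection onto $K\otimes\mathbb{C}\xi_0$ then yields $\|\Phi_I(a)\|\ge\|\pi(\Phi_I(a))\|\ge\|\rho(a)\|=\|a\|$, so $\Phi_I$ is isometric, hence injective.

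The argument is short and I do not anticipate a serious obstacle; the only thing demanding care is the index bookkeeping — verifying that after the relabelling of Remark \ref{remark1}(iv) the torus $\mathcal{A}_{\Theta_J}^{p}$ feeding into $\pi_{(J,\rho)}$ is indeed $\mathcal{A}_{\Theta_I}^{p}$, i.e., that $\Theta_J$ is the restriction of the permuted parameters to $\{1,\dots,p\}$. An alternative, representation-free route would compose the canonical surjection $C_{\Theta}^{p,\,m+n-p}\twoheadrightarrow\mathcal{A}_{\Theta}^{m+n}$ of Remark \ref{remark1}(iii) with the conditional expectation of $\mathcal{A}_{\Theta}^{m+n}$ onto the fixed-point algebra of the gauge action rescaling the generators outside $I$; this composite is contractive and restricts to the identity on $\Phi_I(\mathcal{A}_{\Theta_I}^{p})$, which again forces $\Phi_I$ to be isometric.
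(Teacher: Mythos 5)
Your proof is correct and follows essentially the same route as the paper: the paper also obtains injectivity by observing that $\pi_{(I,\rho)}$ restricted to the vacuum subspace $K\otimes e_0\otimes\cdots\otimes e_0$ recovers $\rho$ (i.e.\ every representation of $\mathcal{A}_{\Theta_I}^{|I|}$ factors through $\Phi_I$), which is exactly your compression argument with a faithful $\rho$. The only cosmetic difference is that you phrase the conclusion as an isometric inequality rather than as a factorization of representations.
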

\begin{proof} Without loss of generality, we will assume that  $I=\{1,2,\cdots |I|\}$. Let     $C_I$ be the $C^*$-subalgebra of $C_{\Theta}^{m,n}$ generated by $s_i^{|I|,m+n-|I|}; i \in I$. By restricting the codomain, we get the following homomorphism
	$$ \Phi_I: \mathcal{A}_{\Theta_{I}}^{|I|} \rightarrow C_I; \quad s_i^{|I|,0} \mapsto s_i^{|I|,m+n-|I|}.$$
	To prove the claim, it is enough to show that any representation $\rho$ acting on $K$  factors through $\Phi_I$. Observe that,
	$$ \pi_{(I,\rho)}(s^{|I|,m+n-|I|}_i)(h \otimes e_0\otimes  \cdots \otimes e_0)= \rho(s_i^{|I|,0})(h) \otimes e_0 \cdots \otimes e_0; \mbox { for } 1\leq i \leq |I|.$$
	Thus,  by identifying $K$ with $K \otimes e_0\otimes \cdots \otimes e_0$, we get the following commutative diagram.
	\[
	\begin{tikzcd}
		\mathcal{A}_{\Theta_{I}}^{|I|} \arrow[rd, "\rho" '] \arrow[r, "\Phi_I"] & C_I \arrow[d, "\pi_{(I,\rho)}"] \\
		& \mathcal{L}(K)
	\end{tikzcd}
	\]
	This proves the claim.\end{proof}
\begin{proposition} \label{exact-sequence1}
	Let  $m,n \in \mathbb{N}_0$ such that  $m+n \geq 1$ and let  $1 \leq l \leq n$.  Let $J_l^{m,n}$ denote the ideal of $C_{\Theta}^{m,n}$ generated by the defect projection of  $ \prod_{1 \leq i \leq l}s_{m+i}^{m,n}$. Then one has the following short exact sequence $\chi_{m,n}^l$ of $C^*$-algebras.
	\[
	\chi_{m,n}^l: \quad 0\longrightarrow J_l^{m,n} \xrightarrow{i} C_{\Theta}^{m,n}\xrightarrow{\beta_{m}^l}  C_{\Theta}^{m+l,n-l}\longrightarrow  0,
	\]
	where $\beta_m^l=\beta_{m+l-1}\circ \cdots \circ\beta_{m}$.
\end{proposition}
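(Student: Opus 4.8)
The plan is to identify $\beta_m^l$ with the quotient homomorphism $C_\Theta^{m,n}\to C_\Theta^{m,n}/J_l^{m,n}$ by matching universal properties; exactness of $\chi_{m,n}^l$ is then immediate. First I would dispose of the routine points. By Remark~\ref{remark1}(iii) each $\beta_k$ sends canonical generators to canonical generators, so the composite $\beta_m^l$ sends $s_i^{m,n}\mapsto s_i^{m+l,n-l}$; its image is a $C^*$-subalgebra containing a generating set, hence $\beta_m^l$ is surjective. Write $V:=s_{m+1}^{m,n}s_{m+2}^{m,n}\cdots s_{m+l}^{m,n}$. A telescoping computation using $(s_i^{m,n})^*s_i^{m,n}=1$ gives $V^*V=1$, so $V$ is an isometry and $1-VV^*$ is its defect projection; thus $J_l^{m,n}=\langle 1-VV^*\rangle$. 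In $C_\Theta^{m+l,n-l}$ the generators $s_{m+1}^{m+l,n-l},\dots,s_{m+l}^{m+l,n-l}$ are unitaries, so $\beta_m^l(V)$ is a unitary and $\beta_m^l(1-VV^*)=0$. Hence $J_l^{m,n}\subseteq\ker\beta_m^l$, and $\beta_m^l$ factors as $\beta_m^l=\overline{\beta}\circ q$ with $q\colon C_\Theta^{m,n}\twoheadrightarrow B:=C_\Theta^{m,n}/J_l^{m,n}$ the quotient map and $\overline{\beta}\colon B\twoheadrightarrow C_\Theta^{m+l,n-l}$ the induced surjection, $\overline{\beta}(\overline{s}_i)=s_i^{m+l,n-l}$.

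The substantive step is to show that the images $\overline{s}_{m+1},\dots,\overline{s}_{m+l}$ of $s_{m+1}^{m,n},\dots,s_{m+l}^{m,n}$ in $B$ are unitaries. In $B$ one has $\overline{V}\,\overline{V}^*=1$. For $0\le j\le l$ put $U_j:=\overline{s}_{m+1}\cdots\overline{s}_{m+j}$ (with $U_0=1$) and $W_j:=U_jU_j^*$, each a projection, and note $U_j^*U_j=1$ since $U_j$ is a product of isometries. Because $\overline{s}_{m+j}$ is an isometry, $W_{j-1}-W_j=U_{j-1}\bigl(1-\overline{s}_{m+j}\overline{s}_{m+j}^*\bigr)U_{j-1}^*\ge 0$, so $1=W_0\ge W_1\ge\cdots\ge W_l=\overline{V}\,\overline{V}^*=1$, forcing $W_j=1$ for all $j$. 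Hence $U_{j-1}\bigl(1-\overline{s}_{m+j}\overline{s}_{m+j}^*\bigr)U_{j-1}^*=0$; since $1-\overline{s}_{m+j}\overline{s}_{m+j}^*\ge0$ this gives $\bigl(1-\overline{s}_{m+j}\overline{s}_{m+j}^*\bigr)U_{j-1}^*=0$, and taking adjoints and multiplying on the left by $U_{j-1}^*$ (using $U_{j-1}^*U_{j-1}=1$) yields $\overline{s}_{m+j}\overline{s}_{m+j}^*=1$ for $1\le j\le l$. I expect this short positivity argument to be the only place where anything genuinely needs to be checked.

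Finally I would close the loop via universal properties. The generators $\overline{s}_1,\dots,\overline{s}_{m+n}$ of $B$ satisfy the twisted commutation relations and $\overline{s}_i^*\overline{s}_i=1$ for all $i$ (inherited from $C_\Theta^{m,n}$), $\overline{s}_i\overline{s}_i^*=1$ for $1\le i\le m$ (inherited), and $\overline{s}_i\overline{s}_i^*=1$ for $m+1\le i\le m+l$ (just proved) --- precisely the defining relations of $C_\Theta^{m+l,n-l}$. Hence the universal property of $C_\Theta^{m+l,n-l}$ produces a homomorphism $\psi\colon C_\Theta^{m+l,n-l}\to B$ with $\psi(s_i^{m+l,n-l})=\overline{s}_i$. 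On generators, $\overline{\beta}\circ\psi=\mathrm{id}$ and $\psi\circ\overline{\beta}=\mathrm{id}$, so $\overline{\beta}$ and $\psi$ are mutually inverse isomorphisms; in particular $\ker\beta_m^l=J_l^{m,n}$. Together with the surjectivity of $\beta_m^l$ and the injectivity of $i$, this shows that $\chi_{m,n}^l$ is a short exact sequence of $C^*$-algebras.
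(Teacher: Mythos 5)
Your proof is correct and follows essentially the same route as the paper's: both establish $J_l^{m,n}\subseteq\ker\beta_m^l$ and then invoke the universal property of $C_{\Theta}^{m+l,n-l}$ to factor any representation (equivalently, the quotient map) killing $J_l^{m,n}$ through $\beta_m^l$, giving the reverse inclusion. The only difference is that you work with the quotient algebra directly and supply the monotone-projection/positivity argument showing that each $\overline{s}_{m+j}$, $1\le j\le l$, becomes unitary once $1-VV^*$ is killed --- a step the paper asserts without proof when it says the images ``satisfy the defining relations of $C_{\Theta}^{m+l,n-l}$.''
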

\begin{proof}
	It is enough to show that $\ker (\beta_{m}^l)=J_l^{m,n}$. Clearly, $J_l^{m,n} \subset \ker (\beta_{m}^l)$. Now, consider
	a representation $\zeta$ of  $C_{\Theta}^{m,n}$ on the Hilbert space $\mathcal{H}$ which vanishes on $J_l^{m,n}$.
	This implies that  $\{\zeta(s_i):1\leq \textrm{i} \leq m+n\}$ satisfy the defining relations of $C_{\Theta}^{m+l,n-l}$.
	By the universal property of  $C_{\Theta}^{m+l,n-l}$,
	we get a representation $\varsigma$ of $C_{\Theta}^{m+l,n-l}$ such that $\varsigma\circ \beta_m^l=\zeta$. This proves that $\ker (\beta_{m}^l) \subset J_l^{m,n}$ and hence the claim.
\end{proof}

\begin{theorem} \label{faithful representation}
	Let $ \rho: \mathcal{A}_{\Theta_{[m]}}^{m} \rightarrow \mathcal{L}(\mathcal{H})$ be a faithful representation of $\mathcal{A}_{\Theta_{[m]}}^{m}$.  Then $\pi_{I,\rho}$ is a faithful representation of $C_{\Theta}^{m,n}$.
\end{theorem}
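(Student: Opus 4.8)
The plan is to proceed by induction on $n$. In the base case $n = 0$, the statement is that a faithful representation of $\mathcal{A}_{\Theta_{[m]}}^m$ gives a faithful representation $\pi_{I,\rho}$ of $C_\Theta^{m,0} = \mathcal{A}_\Theta^m$ (with $I = \{1,\dots,m\}$), which is immediate since $\pi_{I,\rho} = \rho$. For the inductive step, I would use the short exact sequence $\chi_{m,n}^1$ from Proposition \ref{exact-sequence1},
\[
0 \longrightarrow J_1^{m,n} \xrightarrow{\ i\ } C_\Theta^{m,n} \xrightarrow{\ \beta_m\ } C_\Theta^{m+1,n-1} \longrightarrow 0,
\]
and argue that $\pi_{I,\rho}$ is injective by checking it separately on the ideal $J_1^{m,n}$ and modulo the ideal. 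A homomorphism out of an extension is injective provided its restriction to the ideal is injective and the induced map on the quotient is injective; so the two things to verify are (a) $\pi_{I,\rho}\restriction_{J_1^{m,n}}$ is faithful, and (b) the representation of $C_\Theta^{m+1,n-1}$ induced on the quotient is (unitarily equivalent to) $\pi_{I',\rho}$ for the corresponding index set, so that the inductive hypothesis applies.

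For step (b): composing the block of $\pi_{I,\rho}$ with the identification $C_\Theta^{m,n}/J_1^{m,n} \cong C_\Theta^{m+1,n-1}$ amounts to replacing the first shift tensor factor $S^*$ (sitting in the $j_1$-slot) by the unitary $1$, i.e.\ quotienting the Toeplitz algebra $\mathcal{T}$ by $\mathcal{K}$ to get $C(\bbbt)$; concretely one realizes the quotient representation on the subspace where that tensor leg is replaced by $\ell^2(\mathbb{Z})$ and checks it is exactly $\pi_{I',\rho}$ for $I' = I \cup \{m+1\}$ (after the reindexing of Remark \ref{remark1}(iv)). Since $\rho$ is still faithful on $\mathcal{A}_{\Theta_{[m]}}^m$ and $\Theta_{[m]}$ does not change, the inductive hypothesis gives faithfulness on the quotient. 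The bookkeeping with the permutation-of-generators convention and the exact identification of which $\lambda$-factors appear is routine but must be done carefully.

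The main obstacle is step (a): showing $\pi_{I,\rho}$ is faithful on the ideal $J_1^{m,n}$. The ideal $J_1^{m,n}$ is generated by the defect projection $1 - s_{m+1}s_{m+1}^*$, and under $\pi_{I,\rho}$ this projection becomes $1 \otimes (1 - SS^*) \otimes (\text{unitaries}) = 1 \otimes p \otimes 1 \otimes \cdots \otimes 1$ (a rank-one projection in one leg), so $\pi_{I,\rho}(J_1^{m,n})$ is a concrete algebra of operators. I expect $J_1^{m,n}$ to be isomorphic to $\mathcal{K}(\ell^2(\mathbb{N}_0)) \otimes$ (a homomorphic image of a smaller algebra of the same type), or more precisely to a tensor product involving $\mathcal{K}$ and $C_{\Theta_{[\hat{m+1}]}}^{m,n-1}$-type building blocks, and the key point is that conjugating the generators of $J_1^{m,n}$ by the appropriate "diagonal" unitaries built from the $e^{2\pi\mathrm{i}\theta N}$ factors untwists the relations, so that faithfulness reduces to faithfulness of $\rho$ together with faithfulness of the standard representations of Toeplitz algebras and of smaller rotation algebras — which again follows by the inductive hypothesis applied to a $C_\Theta$-algebra with fewer isometries. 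One has to identify $\pi_{I,\rho}\restriction_{J_1^{m,n}}$ precisely enough to invoke these facts; I would do this by exhibiting $J_1^{m,n}$, via the von Neumann–Wold picture underlying Theorem \ref{representations}, as $\mathcal{K} \otimes (\text{lower algebra})$ and checking that $\pi_{I,\rho}$ restricts to $\mathrm{id}_\mathcal{K} \otimes (\text{a faithful rep of the lower algebra})$, the latter being handled by induction. This spectral/Wold-decomposition identification of the ideal is the technical heart of the argument.
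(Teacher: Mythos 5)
Your route is genuinely different from the paper's: the paper proves this in three lines by observing that every irreducible representation of $C_{\Theta}^{m,n}$ is some $\pi_{(I',\rho')}$ (Theorem \ref{representations}, via the Wold decomposition) and that all of these factor through $\pi_{(I,\rho)}$ when $\rho$ is faithful, so $\ker\pi_{(I,\rho)}$ lies in every primitive ideal and is zero. Your induction along the extension $0\to J_1^{m,n}\to C_{\Theta}^{m,n}\to C_{\Theta}^{m+1,n-1}\to 0$ is structurally sound as a reduction: $\ker\pi_{(I,\rho)}\cap J_1^{m,n}=0$ together with injectivity of the induced map on the quotient does force $\ker\pi_{(I,\rho)}=0$, and your step (a) is plausible given Proposition \ref{twisted}.

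The gap is in step (b). To invoke the inductive hypothesis for $C_{\Theta}^{m+1,n-1}$ you must exhibit the quotient representation as $\pi_{(I',\rho')}$ for a \emph{faithful} representation $\rho'$ of the larger rotation algebra $\mathcal{A}_{\Theta_{[m+1]}}^{m+1}$; writing ``$\pi_{I',\rho}$'' with the original $\rho$ is a type mismatch, since $\rho$ only represents $\mathcal{A}_{\Theta_{[m]}}^{m}$. The natural candidate is the representation on $\mathcal{H}\otimes\ell^{2}(\mathbb{Z})$ sending $s_i\mapsto\rho(s_i)\otimes e^{\pm 2\pi\mathrm{i}\theta_{i,m+1}N}$ and $s_{m+1}\mapsto 1\otimes(\text{bilateral shift})$, i.e.\ the regular representation of $\mathcal{A}_{\Theta_{[m]}}^{m}\rtimes\mathbb{Z}\cong\mathcal{A}_{\Theta_{[m+1]}}^{m+1}$ (Proposition \ref{cross-product}) induced from $\rho$. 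Its faithfulness does not follow from ``$\rho$ is still faithful and $\Theta_{[m]}$ does not change''; it is precisely the nontrivial content of this step and needs amenability of $\mathbb{Z}$ (reduced $=$ full crossed product), because the theorem carries no nondegeneracy hypothesis on $\Theta$ and $\mathcal{A}_{\Theta_{[m+1]}}^{m+1}$ need not be simple. Separately, the induced map lands in the abstract quotient $\pi_{(I,\rho)}(C_{\Theta}^{m,n})/\pi_{(I,\rho)}(J_1^{m,n})$, and identifying that with the concrete representation on $K\otimes\ell^{2}(\mathbb{Z})\otimes\cdots$ is not a matter of ``realizing it on a subspace'' ($\ell^{2}(\mathbb{Z})$ is not a subspace of $\ell^{2}(\mathbb{N}_0)$ here); it requires a Toeplitz-type norm estimate or an appeal to universal properties. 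Until these two points are supplied the induction does not close.
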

\begin{proof} It is not difficult to see  that  if a representation $\tilde{\rho}$ factors through $\rho$ then   $\pi_{I,\tilde{\rho}}$ factors through $\pi_{I,\rho}$. Hence by Theorem (\ref{representations}),  all irreducible representations of $C_{\Theta}^{m,n}$ factors through $\pi_{I,\rho}$, proving the claim.
\end{proof}	

\begin{corollary} \label{kernel1} Let $ \rho: \mathcal{A}_{\Theta_I}^{|I|} \rightarrow \mathcal{L}(\mathcal{H})$ be a faithful representation of $\mathcal{A}_{\Theta_I}^{|I|}$. Then $\ker \pi_{I,\rho}=J_I^{m,n}$.
\end{corollary}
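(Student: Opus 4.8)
The plan is to reduce the statement to Theorem~\ref{faithful representation} via the short exact sequence of Proposition~\ref{exact-sequence1}. First, using Remark~\ref{remark1}(iv) I would permute the generators $s_1,\dots,s_{m+n}$ (replacing $\Theta$ by the strictly upper triangular part of the permuted matrix, which affects nothing below) so as to assume $I=\{1,2,\dots,k\}$ with $k=|I|$; since $\{1,\dots,m\}\subset I$ we have $k\ge m$, and I set $l=k-m$. With this normalization $\Theta_I=\Theta_{[k]}$, and $J_I^{m,n}=J_l^{m,n}$ is the ideal of Proposition~\ref{exact-sequence1} generated by the defect projection of $s_{m+1}^{m,n}\cdots s_{m+l}^{m,n}$. (If $l=0$, i.e.\ $I=\{1,\dots,m\}$, then $J_I^{m,n}=0$ and the claim is exactly Theorem~\ref{faithful representation}, so I may assume $l\ge 1$.)

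Next I would show that $\pi_{I,\rho}$ factors through $\beta_m^l$. From the explicit formula defining $\pi_{(I,\rho)}$, for each $i\in I$ the operator $\pi_{I,\rho}(s_i)=\rho(s_i)\otimes\lambda_{i,j_1}\otimes\cdots\otimes\lambda_{i,j_s}$ is a tensor product of unitaries: each $\lambda$ has the form $e^{\pm 2\pi\mathrm{i}\theta N}$, and $\rho(s_i)$ is unitary because the canonical generators of $\mathcal{A}_{\Theta_I}^{|I|}$ are unitary. Hence $\pi_{I,\rho}(s_1),\dots,\pi_{I,\rho}(s_{m+n})$ satisfy all the defining relations of $C_{\Theta}^{m+l,n-l}$: the twisted commutation relations and the isometry relations because they already hold in $C_{\Theta}^{m,n}$, and the additional relations $s_is_i^*=1$ for $i\le m+l=k$ because each such $\pi_{I,\rho}(s_i)$ is unitary. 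By the universal property of $C_{\Theta}^{m+l,n-l}$ there is a representation $\widetilde\pi$ of $C_{\Theta}^{m+l,n-l}$ with $\widetilde\pi(s_i^{m+l,n-l})=\pi_{I,\rho}(s_i^{m,n})$, and since $\beta_m^l(s_i^{m,n})=s_i^{m+l,n-l}$ we obtain $\widetilde\pi\circ\beta_m^l=\pi_{I,\rho}$.

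Comparing the defining formulas term by term, $\widetilde\pi$ is precisely the representation $\pi_{(I,\rho)}$ of $C_{\Theta}^{m+l,n-l}$, now with $I=\{1,\dots,k\}$ regarded as the largest element of $\Sigma_{k,n-l}$; since $\Theta_I=\Theta_{[k]}$ and $\rho$ is faithful on $\mathcal{A}_{\Theta_I}^{|I|}=\mathcal{A}_{\Theta_{[k]}}^{k}$, Theorem~\ref{faithful representation} applied to $C_{\Theta}^{k,n-l}$ (its proof uses only Theorem~\ref{representations} and the factoring lemma, hence is valid verbatim for this algebra) shows that $\widetilde\pi$ is faithful. Therefore $\ker\pi_{I,\rho}=(\beta_m^l)^{-1}(\ker\widetilde\pi)=(\beta_m^l)^{-1}(0)=\ker\beta_m^l$, and by Proposition~\ref{exact-sequence1} this equals $J_l^{m,n}=J_I^{m,n}$, as required.

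The step I would treat most carefully is the normalization: verifying that the permutation supplied by Remark~\ref{remark1}(iv) indeed carries $J_I^{m,n}$ onto $J_{|I|-m}^{m,n}$ and $\Theta_I$ onto $\Theta_{[|I|]}$ for the permuted parameter, so that the invocation of Proposition~\ref{exact-sequence1} and of Theorem~\ref{faithful representation} for the quotient algebra is legitimate. There is no deep obstacle beyond this bookkeeping; the rest is an unwinding of the definitions of $\pi_{(I,\rho)}$ and $\beta_m^l$.
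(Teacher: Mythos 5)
Your proposal is correct and follows exactly the route the paper intends: its own proof is the one-line assertion that the corollary is an immediate consequence of Theorem~\ref{faithful representation} and Proposition~\ref{exact-sequence1}, and your argument simply fills in the details of that deduction (normalizing $I$ via Remark~\ref{remark1}(iv), factoring $\pi_{I,\rho}$ through $\beta_m^l$, and identifying the induced representation of the quotient with the faithful $\pi_{(I,\rho)}$ of $C_{\Theta}^{k,n-l}$). No discrepancy to report.
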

\begin{proof} It is an immediate consequence of Theorem (\ref{faithful representation}) and Proposition (\ref{exact-sequence1}).
\end{proof}	
Define a topology $\mathscr{T}$ on $\Sigma_{m,n}$ as follows. Call a subset $Z$ open if whenever $I\subset Z$ then $I^{\prime} \subset Z$ for every subset $I^{\prime} \subset I$.
\begin{corollary} \label{topology}
	Let $m,n\in \mathbb{N}_0$ such that $m \geq 2$. Let $\Theta \in \bigwedge_{m+n}$ such that $\Theta_I \in\bigwedge_{|I|}$ for all $I \in \Sigma_{m,n}$. For $I \in \Sigma_{m,n}$, let $J_I$ be the ideal of $C_{\Theta}^{m,n}$  generated by the defect projection of the isometry $\prod_{i \in I} s_i$. Then
	\[
	\mbox{Prim}(C_{\Theta}^{m,n})=\{J_I: I \in \Sigma_{m,n}\}.
	\]
	Moreover, the hull-kernel topology on 	$\mbox{Prim}(C_{\Theta}^{m,n})$ is same as $\tau$.
\end{corollary}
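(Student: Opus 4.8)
The plan is to read the underlying set of $\mbox{Prim}(C_{\Theta}^{m,n})$ off Theorem~\ref{representations} and Corollary~\ref{kernel1}, and then to recover the hull--kernel topology by determining exactly when $J_{I'}\subseteq J_I$. For the underlying set: since $|I|\geq m\geq 2$ and $\Theta_I\in\bigwedge_{|I|}$ for every $I\in\Sigma_{m,n}$, each rotation algebra $\mathcal{A}_{\Theta_I}^{|I|}$ is simple (this is precisely the significance of the condition $\Theta_I\in\bigwedge_{|I|}$; see \cite{Phi-2006aa}). Hence every irreducible representation $\rho$ of $\mathcal{A}_{\Theta_I}^{|I|}$ is faithful, so by Corollary~\ref{kernel1} the kernel of $\pi_{(I,\rho)}$ equals $J_I$ regardless of $\rho$; here $J_I$ as in the statement agrees with the ideal $J_I^{m,n}$ of Proposition~\ref{exact-sequence1} after reordering the generators (by Remark~\ref{remark1}(iv), noting that conjugating by the unitary $\prod_{i\in I,\, i\leq m}s_i$ leaves the ideal generated by the defect projection of $\prod_{i\in I}s_i$ unchanged). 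Since Theorem~\ref{representations} says the $\pi_{(I,\rho)}$ are all the irreducible representations and every primitive ideal is the kernel of one of them, we conclude $\mbox{Prim}(C_{\Theta}^{m,n})=\{J_I:I\in\Sigma_{m,n}\}$.

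Next I would establish that $J_{I'}\subseteq J_I$ if and only if $I'\subseteq I$. By Proposition~\ref{exact-sequence1} (after reordering the generators so that the indices in $I$ come first), $C_{\Theta}^{m,n}/J_I\cong C_{\Theta'}^{|I|,\,m+n-|I|}$ for a suitably permuted parameter $\Theta'$, in which the image $\bar{s}_i$ is unitary for $i\in I$ and a proper isometry for $i\notin I$. If $I'\subseteq I$, then $\prod_{i\in I'}\bar{s}_i$ is a product of unitaries, hence unitary, so its defect projection vanishes and therefore $J_{I'}\subseteq J_I$. Conversely, suppose $I'\not\subseteq I$ and pick $k\in I'\setminus I$. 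Moving the unitary factors $\bar{s}_i$ with $i\in I'\cap I$ to the left --- which by the commutation relations only introduces a unimodular scalar --- one writes the image of $\prod_{i\in I'}s_i$ as $uv$ with $u$ unitary and $v=\prod_{i\in I'\setminus I}\bar{s}_i$ a product of proper isometries whose leading factor is $\bar{s}_k$. Then the defect projection of $uv$ equals $u(1-vv^*)u^*$, and $1-vv^*\geq 1-\bar{s}_k\bar{s}_k^*\neq 0$ because $\bar{s}_k$ acts as a proper isometry in any faithful representation of $C_{\Theta'}^{|I|,\,m+n-|I|}$ (one exists by Theorem~\ref{faithful representation}). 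Hence the defect projection of $\prod_{i\in I'}s_i$ is nonzero in $C_{\Theta}^{m,n}/J_I$, i.e.\ $J_{I'}\not\subseteq J_I$. In particular the ideals $J_I$ are pairwise distinct, so $I\mapsto J_I$ is a bijection $\Sigma_{m,n}\to\mbox{Prim}(C_{\Theta}^{m,n})$.

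Finally I would transport the topology along this bijection. For $Z\subseteq\Sigma_{m,n}$ put $\mathcal{I}_Z=\bigcap_{I\in Z}J_I$. Since $\Sigma_{m,n}$ is finite (it has $2^n$ elements), $\mathcal{I}_Z$ is a finite intersection, so a primitive --- hence prime --- ideal $J_{I'}$ contains $\mathcal{I}_Z$ exactly when it contains some $J_I$ with $I\in Z$, i.e.\ exactly when $I'\supseteq I$ for some $I\in Z$ by the previous step. Thus the hull of $\mathcal{I}_Z$ is the $\subseteq$-upward closure of $Z$; as every hull--kernel closed set is of the form $\mbox{hull}(\ker C)=\mbox{hull}(\mathcal{I}_{Z})$, the hull--kernel closed sets are precisely the upward-closed subsets of $(\Sigma_{m,n},\subseteq)$. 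By definition the $\mathscr{T}$-open sets are exactly the downward-closed subsets, hence the $\mathscr{T}$-closed sets are exactly the upward-closed ones, and the two topologies coincide.

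The step I expect to be the main obstacle is the converse half of the containment claim: one has to be certain that, after absorbing the unitary factors indexed by $I'\cap I$, the defect projection of $\prod_{i\in I'}s_i$ genuinely survives in the quotient $C_{\Theta}^{m,n}/J_I$. Once it is clear that a proper isometry stays proper in a faithful representation, everything else is routine bookkeeping with Theorem~\ref{representations}, Corollary~\ref{kernel1}, and the simplicity of nondegenerate noncommutative tori.
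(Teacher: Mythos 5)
Your proposal is correct and follows the same route as the paper, whose entire proof is the one-line observation that each $\mathcal{A}_{\Theta_I}^{|I|}$ is simple so that Corollary (\ref{kernel1}) applies. You have merely supplied the details the paper leaves implicit, namely the verification that $J_{I'}\subseteq J_I$ exactly when $I'\subseteq I$ and the resulting identification of the hull--kernel closed sets with the upward-closed subsets of $\Sigma_{m,n}$, both of which are accurate.
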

\begin{proof} Note that,  $\mathcal{A}_{\Theta_I}^{|I|}$ is simple for all  $I \in \Sigma_{m,n}$. Hence by Corollary \ref{kernel1}, we get the claim.
\end{proof}	

Define the Hilbert space
\[
\mathcal{H}^{m,n}=\underbrace{\ell^2(\mathbb{Z})\otimes \ell^2(\mathbb{Z})\otimes \cdots \otimes\ell^2(\mathbb{Z})}_{m \mbox{ copies }}
\otimes \underbrace{\ell^2(\mathbb{N})\otimes \ell^2(\mathbb{N})\otimes \cdots \otimes\ell^2(\mathbb{N})}_{n \mbox{ copies }}.
\]

For $1 \leq i \leq m+n$, define the operator $s_i^{m,n}$ to be an operator on $\mathcal{H}^{m,n}$ given as follows:
\[
T_i^{m,n}= 1^{\otimes^{i-1}}\otimes S^* \otimes
e^{2\pi \textrm{i} \theta_{i,i+1}N}\otimes e^{2\pi \textrm{i} \theta_{i,i+2}N}\otimes \cdots \otimes e^{2\pi \textrm{i} \theta_{i,m+n}N}.
\]
Since $\{T_i^{m,n}\}_{1\leq i \leq m+n}$ satisfy the defining relations of $C_{\Theta}^{m,n}$, we get a representation
\[
\Psi^{m,n}:C_{\Theta}^{m,n} \rightarrow \mathcal{L}(\mathcal{H}^{m,n}); \quad  s_i^{m,n} \mapsto T_i^{m,n}.
\]

\begin{theorem} \label{faithful}
	The representation $\Psi^{m,n}$ is faithful.
\end{theorem}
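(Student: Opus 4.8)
The plan is to recognise $\Psi^{m,n}$ as one of the induced representations $\pi_{(I,\rho)}$ built in Section~2 and then to quote Theorem~\ref{faithful representation}. Concretely, I would take $I=\{1,2,\dots,m\}\in\Sigma_{m,n}$, so that $\mathcal{A}_{\Theta_I}^{|I|}=\mathcal{A}_{\Theta_{[m]}}^{m}$, and let $\rho=\Psi^{m,0}\colon\mathcal{A}_{\Theta_{[m]}}^{m}\to\mathcal{L}\bigl(\ell^2(\mathbb{Z})^{\otimes m}\bigr)$ be the representation $s_i^{m,0}\mapsto T_i^{m,0}$. The statement then reduces to two claims: (a) $\rho=\Psi^{m,0}$ is a \emph{faithful} representation of the noncommutative $m$-torus $\mathcal{A}_{\Theta_{[m]}}^{m}$; and (b) after the obvious identification $\mathcal{H}^{m,n}\cong\ell^2(\mathbb{Z})^{\otimes m}\otimes\ell^2(\mathbb{N}_0)^{\otimes n}$ (taking $K=\ell^2(\mathbb{Z})^{\otimes m}$ in the construction of $\pi_{(I,\rho)}$) one has $\Psi^{m,n}=\pi_{(I,\rho)}$. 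Granting these, Theorem~\ref{faithful representation} applied to the faithful representation $\rho$ immediately gives that $\pi_{(I,\rho)}=\Psi^{m,n}$ is faithful.

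For (a): when $m=0$ this is the identity representation of $\mathbb{C}$ and when $m=1$ it is multiplication by $z$ on $L^2(\bbbt)\cong\ell^2(\mathbb{Z})$, both faithful; in general $\Psi^{m,0}$ implements the canonical tracial state of $\mathcal{A}_{\Theta_{[m]}}^{m}$ via $a\mapsto\langle\Psi^{m,0}(a)\xi_0,\xi_0\rangle$ with $\xi_0=e_0^{\otimes m}$, and since that trace is faithful and $\langle\Psi^{m,0}(a^{*}a)\xi_0,\xi_0\rangle=\|\Psi^{m,0}(a)\xi_0\|^{2}$, any $a$ with $\Psi^{m,0}(a)=0$ must vanish; equivalently, $\Psi^{m,0}$ is unitarily equivalent to the GNS (left regular) representation of that trace. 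For (b): here $I^{c}=\{m+1<m+2<\dots<m+n\}$, so the recipe for $\pi_{(I,\rho)}$ puts, for $1\le l\le n$, the adjoint shift $S^{*}$ in the $(m+l)$-th tensor slot followed by the phases $e^{2\pi\mathrm{i}\theta_{m+l,m+l+1}N},\dots,e^{2\pi\mathrm{i}\theta_{m+l,m+n}N}$, which is exactly $T_{m+l}^{m,n}$; and for $1\le i\le m$ every $j\in I^{c}$ satisfies $i<j$, so the factor $\lambda_{i,j}$ equals $e^{2\pi\mathrm{i}\theta_{i,j}N}$, whence $\pi_{(I,\rho)}(s_i)=T_i^{m,0}\otimes e^{2\pi\mathrm{i}\theta_{i,m+1}N}\otimes\dots\otimes e^{2\pi\mathrm{i}\theta_{i,m+n}N}=T_i^{m,n}$. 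Since the two representations agree on every generator, $\Psi^{m,n}=\pi_{(I,\rho)}$.

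The only point carrying genuine content is (a): faithfulness of $\Psi^{m,0}$ for an \emph{arbitrary} parameter $\Theta_{[m]}$, including rational ones where $\mathcal{A}_{\Theta_{[m]}}^{m}$ fails to be simple, which rests on the existence of the canonical faithful trace on the noncommutative torus. Step (b) is purely a matter of matching tensor-slot positions and the $\lambda_{i,j}$-phases in the two defining formulas, and the conclusion is then a direct invocation of Theorem~\ref{faithful representation}.
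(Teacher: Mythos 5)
Your proposal is correct and follows the paper's own argument: the paper likewise identifies $\Psi^{m,n}$ with $\pi_{(I,\rho)}$ for $I=\{1,\dots,m\}$ and $\rho$ the representation of $\mathcal{A}_{\Theta_{[m]}}^{m}$ on $\ell^2(\mathbb{Z})^{\otimes m}$, and then invokes Theorem \ref{faithful representation} together with the faithfulness of $\rho$. The only difference is that you supply the justification (via the canonical faithful trace and its GNS representation) for the faithfulness of $\rho$, which the paper asserts without proof.
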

\begin{proof} One can see that $\Psi^{m,n}=\pi_{I,\rho} $, where $I=\{1,2,\cdots m\}$ and
	$$\rho:  \mathcal{A}_{\Theta_{[m]}}^{m} \longrightarrow \mathcal{L}(\ell^2(\mathbb{Z})^{\otimes m});$$
	given by
	\[ \rho(s_i^{m,n})= 1^{\otimes^{i-1}}\otimes S^* \otimes
	e^{2\pi \textrm{i} \theta_{i,i+1}N}\otimes e^{2\pi \textrm{i} \theta_{i,i+2}N}\otimes \cdots \otimes e^{2\pi \textrm{i} \theta_{i,m}N}, \mbox{ for } 1 \leq i \leq m.
	\]
	The claim follows by Theorem (\ref{faithful representation}) and the fact that $\rho$ is faithful.
\end{proof}

From now on, we will identify $C_{\Theta}^{m,n}$ with its image under the map $ \Psi^{m,n}$ and the generators $s_i^{m,n}$ of $C_{\Theta}^{m,n}$
with $T_i^{m,n}$ for each
$1\leq i \leq m+n$.

\section{Stable and Non-stable $K$-groups of $C_{\Theta}^{m,n}$}
In this section, we first compute $K$ groups of $C_{\Theta}^{m,n}$ for all $m,n \in \mathbb{N}_{0}$. Then we recall from (\cite{SetVai-2020aa},\cite{Tho-1991aa}) the notion of $K$-stability, and  prove that the $C^*$-algebra  $C_{\Theta}^{m,n}$ is $K$-stable if $m+n>1$ and
$\Theta  \in \bigwedge_{m+n}$.

\begin{proposition} \label{exact-sequence}
	Let  $m,n \in \mathbb{N}_0$ such that  $m+n \geq 1$ and let  $1 \leq l \leq n$.  Let $I_{m,n}$ denote the ideal of $C_{\Theta}^{m,n}$ generated by the defect projection of  $s_{m+1}^{m,n}$. Then one has the following short exact sequence $\chi_{m,n}$ of $C^*$-algebras.
	\begin{IEEEeqnarray}{rCl}\label{exactsequence}
		\chi_{m,n}: \quad 0\longrightarrow I_{m,n} \xrightarrow{i} C_{\Theta}^{m,n}\xrightarrow{\beta_m}  C_{\Theta}^{m+1,n-1}\longrightarrow  0.
	\end{IEEEeqnarray}
\end{proposition}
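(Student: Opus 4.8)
The plan is to recognise this as the special case $l=1$ of Proposition~\ref{exact-sequence1}. With $l=1$ the isometry $\prod_{1\le i\le 1}s_{m+i}^{m,n}$ is simply $s_{m+1}^{m,n}$, so the ideal $J_1^{m,n}$ there coincides with $I_{m,n}$ here, while $\beta_m^1=\beta_m$. Hence once Proposition~\ref{exact-sequence1} is available there is, strictly speaking, nothing left to prove; I would state the proposition in its own right only because the sequence $\chi_{m,n}$ is the basic building block used repeatedly in the $K$-theory computations of this section. For completeness I would nonetheless record the short direct argument, whose entire content is the identification $\ker\beta_m=I_{m,n}$: given that, exactness at $C_{\Theta}^{m,n}$ is immediate, exactness at $I_{m,n}$ is just injectivity of the inclusion, and surjectivity of $\beta_m$ is clear since $\beta_m$ sends the canonical generators of $C_{\Theta}^{m,n}$ onto those of $C_{\Theta}^{m+1,n-1}$ (the well-definedness of $\beta_m$ on all of $C_{\Theta}^{m,n}$ being the content of Remark~\ref{remark1}(iii)).

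For the inclusion $I_{m,n}\subseteq\ker\beta_m$, I would note that in $C_{\Theta}^{m+1,n-1}$ the generator $s_{m+1}$ is a unitary, so its defect projection $1-s_{m+1}s_{m+1}^{*}$ vanishes; since $\beta_m$ carries the defect projection of $s_{m+1}^{m,n}$ to this element, and $I_{m,n}$ is the ideal it generates, $\beta_m(I_{m,n})=0$.

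For the reverse inclusion I would argue representation-theoretically, exactly as in Proposition~\ref{exact-sequence1}. Let $\zeta$ be any representation of $C_{\Theta}^{m,n}$ with $\zeta(I_{m,n})=0$; then $\zeta$ annihilates the defect projection of $s_{m+1}^{m,n}$, so $\zeta(s_{m+1}^{m,n})$ is a unitary, and consequently the operators $\{\zeta(s_i^{m,n})\}_{1\le i\le m+n}$ satisfy the defining relations of $C_{\Theta}^{m+1,n-1}$. By the universal property of $C_{\Theta}^{m+1,n-1}$ there is a representation $\varsigma$ with $\varsigma\circ\beta_m=\zeta$, whence $\ker\beta_m\subseteq\ker\zeta$. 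Taking $\zeta$ to be a faithful representation of the quotient $C_{\Theta}^{m,n}/I_{m,n}$ composed with the quotient map (equivalently, applying this to a separating family of irreducible representations via Theorem~\ref{representations}) forces $\ker\beta_m\subseteq I_{m,n}$, completing the proof. There is no genuine obstacle; the only point requiring a moment's care is precisely that $\beta_m$ is defined on the full $C^{*}$-algebra and not merely on a dense $*$-subalgebra, which is guaranteed by the universal description in Remark~\ref{remark1}(iii).
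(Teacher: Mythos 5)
Your proposal is correct and matches the paper exactly: the paper's entire proof is the observation that this is Proposition (\ref{exact-sequence1}) with $l=1$, which is precisely your opening move. The additional direct argument you record is just an unwinding of the proof of Proposition (\ref{exact-sequence1}) and is consistent with it.
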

\begin{proof} The proof follows  from Proposition (\ref{exact-sequence1}) by putting $l=1$.
\end{proof}	

\begin{proposition} \label{twisted}
	Let $I_{m,n}$ be the ideal of $C_{\Theta}^{m,n}$ generated by $1-s_{m+1}^{m,n}(s_{m+1}^{m,n})^*$. Then one has
	\[I_{m,n}
	\cong \mathcal{K}\otimes C_{\Theta_{[\widehat{m+1}]}}^{m,n-1}.
	\]
\end{proposition}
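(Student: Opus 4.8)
The plan is to carry out the whole argument inside the faithful representation $\Psi^{m,n}$ of Theorem~\ref{faithful}, where $I_{m,n}$ becomes concretely visible and the isomorphism can be implemented by an explicit unitary. The first step is to pin down the defect projection: since the $(m+1)$-st tensor leg of $s_{m+1}^{m,n}$ is the non-unitary isometry $S^{*}$ with $1-S^{*}(S^{*})^{*}=p$, while its tail on the remaining legs is unitary, one gets $1-s_{m+1}^{m,n}(s_{m+1}^{m,n})^{*}=q:=1^{\otimes m}\otimes p\otimes 1^{\otimes(n-1)}$, so $I_{m,n}=\langle q\rangle$. Next I would regroup $\mathcal H^{m,n}\cong\mathcal H_{0}\otimes\ell^{2}(\mathbb N_{0})\otimes\mathcal H_{1}$ with $\mathcal H_{0}=\ell^{2}(\mathbb Z)^{\otimes m}$ (legs $1,\dots,m$) and $\mathcal H_{1}=\ell^{2}(\mathbb N_{0})^{\otimes(n-1)}$ (legs $m+2,\dots,m+n$), noting $\mathcal H_{0}\otimes\mathcal H_{1}=\mathcal H^{m,n-1}$, and write $s_{i}^{m,n}=A_{i}\otimes D_{i}\otimes B_{i}$ for $i\le m$ (with $D_{i}=e^{2\pi\mathrm i\theta_{i,m+1}N}$ on leg $m+1$), $s_{m+1}^{m,n}=1\otimes S^{*}\otimes R$ with $R$ the unitary tail on $\mathcal H_{1}$, and $s_{j}^{m,n}=1\otimes 1\otimes C_{j}$ for $j>m+1$ (these act trivially on leg $m+1$). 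A routine check shows that on $\mathcal H_{0}\otimes\mathcal H_{1}$ the operators $A_{i}\otimes B_{i}$ ($i\le m$) and $1\otimes C_{j}$ ($j>m+1$) are exactly the images of the canonical generators under $\Psi^{m,n-1}$ at parameter $\Theta_{[\widehat{m+1}]}$ (the $(m+1)$-st exponential factor of each $s_i^{m,n}$, $i\le m$, is precisely the one that disappears); hence, by Theorem~\ref{faithful} applied one level down, they generate a faithful copy of the unital algebra $B:=C_{\Theta_{[\widehat{m+1}]}}^{m,n-1}$.

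The heart of the argument is the ``untwisting'' unitary $W:=\sum_{k\ge 0}1_{\mathcal H_{0}}\otimes p_{kk}\otimes (R^{*})^{k}$ on $\mathcal H^{m,n}$. Using that $R$ is diagonal (so commutes with each $B_{i}$), that $S^{*}e^{2\pi\mathrm i\theta N}=e^{-2\pi\mathrm i\theta}e^{2\pi\mathrm i\theta N}S^{*}$, and that $S^{*}p_{kk}=p_{k,k+1}$, a short computation gives $WqW^{*}=q$, $Ws_{i}^{m,n}W^{*}=s_{i}^{m,n}$ for $i\le m$, $Ws_{m+1}^{m,n}W^{*}=1\otimes S^{*}\otimes 1$, and $Ws_{j}^{m,n}W^{*}=1\otimes e^{-2\pi\mathrm i\theta_{m+1,j}N}\otimes C_{j}$ for $j>m+1$. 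The point to stress is that although $W$ does \emph{not} fully straighten the generators with $j>m+1$, each conjugated generator still lies in $\mathcal L(\ell^{2}(\mathbb N_{0}))\otimes B$ (realized on $\mathcal H^{m,n}$ by swapping leg $m+1$ to the front), where one uses $1\in B$ for the $(m+1)$-st and the $j>m+1$ generators and $A_{i}\otimes B_{i}\in B$ for $i\le m$. Consequently $W C_{\Theta}^{m,n}W^{*}\subseteq\mathcal L(\ell^{2}(\mathbb N_{0}))\otimes B$, and since $\mathcal K(\ell^{2}(\mathbb N_{0}))\otimes B$ is a closed two-sided ideal of $\mathcal L(\ell^{2}(\mathbb N_{0}))\otimes B$ containing $q=p\otimes 1_{B}$, this already yields $W I_{m,n}W^{*}\subseteq\mathcal K(\ell^{2}(\mathbb N_{0}))\otimes B$.

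For the reverse inclusion I would produce generators. The elements $(s_{m+1}^{m,n})^{j}\,q\,\big((s_{m+1}^{m,n})^{*}\big)^{k}$ lie in $I_{m,n}$, and conjugation by $W$ carries them to the matrix units $1\otimes p_{kj}\otimes 1$, whence $\mathcal K(\ell^{2}(\mathbb N_{0}))\otimes 1_{B}\subseteq W I_{m,n}W^{*}$. Compressing the remaining conjugated generators between two such matrix units — e.g. computing $(1\otimes p_{0b}\otimes 1)(Ws_{i}^{m,n}W^{*})(1\otimes p_{c0}\otimes 1)$ and the analogous products for $s_{j}^{m,n}$ — produces, up to a unimodular scalar, $p_{cb}\otimes g$ for every canonical generator $g$ of $B$ and all $c,b\ge 0$. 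Since $W I_{m,n}W^{*}$ is a $C^{*}$-algebra and these elements together with $\mathcal K\otimes 1_{B}$ generate $\mathcal K(\ell^{2}(\mathbb N_{0}))\otimes B$, the reverse inclusion follows. Combining the two, conjugation by $W$ restricts to an isomorphism $I_{m,n}\xrightarrow{\ \cong\ }\mathcal K\otimes C_{\Theta_{[\widehat{m+1}]}}^{m,n-1}$.

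The main obstacle, as I see it, is precisely the fact that one cannot hope to straighten all the generators simultaneously: $W$ leaves a residual rotation $e^{-2\pi\mathrm i\theta_{m+1,j}N}$ on the legs $j>m+1$, and the proof only closes because (a) $W$ untwists $s_{m+1}^{m,n}$ completely, which is what makes the $\mathcal K$-matrix units surface cleanly, and (b) every conjugated generator nevertheless lands in $\mathcal L(\ell^{2}(\mathbb N_{0}))\otimes B$, which powers the ideal-theoretic squeeze in both directions. Secondary technical points are the bookkeeping identifying the generators of $B$ with the $\Psi^{m,n-1}$-generators at parameter $\Theta_{[\widehat{m+1}]}$, and the degenerate boundary cases ($n=1$, and then $m\le 1$) in which $C_{\Theta_{[\widehat{m+1}]}}^{m,n-1}$ is to be read as the appropriate rotation algebra, as $C(\mathbb T)$, or as $\mathbb C$ when no generators remain.
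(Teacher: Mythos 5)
Your proof is correct, and it takes a genuinely more self-contained route than the paper. The paper's own argument is essentially a two-step citation of Weber's machinery: it first identifies $I_{m,n}$ with a twisted tensor product $\mathcal{K}\otimes_{\varphi} C_{\Theta_{[\widehat{m+1}]}}^{m,n-1}$ via Weber's definition $(3.1)$, and then invokes Weber's Lemma $2.2$ to remove the twist. Your argument performs both steps at once, explicitly, inside the faithful representation $\Psi^{m,n}$: the diagonal unitary $W=\sum_{k}1\otimes p_{kk}\otimes (R^{*})^{k}$ is exactly the kind of untwisting unitary that underlies Weber's Lemma $2.2$, and your two-sided squeeze --- $WI_{m,n}W^{*}\subseteq\mathcal{K}\otimes B$ because $\mathcal{K}\otimes B$ is an ideal of $\mathcal{L}(\ell^{2}(\mathbb{N}_{0}))\otimes B$ containing $WqW^{*}=p\otimes 1$, and the reverse inclusion by compressing the conjugated generators between the matrix units coming from $(s_{m+1}^{m,n})^{j}q\big((s_{m+1}^{m,n})^{*}\big)^{k}$ --- replaces the appeal to Weber's identification of the ideal with a twisted tensor product. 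I have checked the key computations: $Ws_{i}^{m,n}W^{*}=s_{i}^{m,n}$ for $i\le m$ (all relevant factors on legs $m+1,\dots,m+n$ are diagonal), $Ws_{m+1}^{m,n}W^{*}=1\otimes S^{*}\otimes 1$ since $(R^{*})^{l+1}RR^{l}=1$, $Ws_{j}^{m,n}W^{*}=1\otimes e^{-2\pi\mathrm{i}\theta_{m+1,j}N}\otimes C_{j}$ from $R^{*}C_{j}R=e^{-2\pi\mathrm{i}\theta_{m+1,j}}C_{j}$, and $p_{0b}D_{i}p_{c0}=p_{cb}$ because the diagonal factors act as the identity on $e_{0}$; the generated algebra computation $(p_{cb}\otimes g)(p_{b'c}\otimes g')=p_{b'b}\otimes gg'$ then closes the reverse inclusion. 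What your version buys is independence from Weber's formalism, a concrete spatial implementation of the isomorphism, and an immediate proof of Remark \ref{rem} (that $1-s_{m+1}^{m,n}(s_{m+1}^{m,n})^{*}$ corresponds to $p\otimes 1$), since $W$ fixes $q$; the cost is the page of routine verification you outline. The boundary cases you flag are indeed harmless: for $n=1$ one has $R=1$, $W=1$, and $B=\mathcal{A}_{\Theta_{[m]}}^{m}$ (read as $C(\mathbb{T})$ or $\mathbb{C}$ for $m\le 1$), and the same squeeze applies verbatim.
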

\begin{proof} Define an automorphism $\varphi: C_{\Theta_{[\widehat{m+1}]}}^{m,n-1} \rightarrow C_{\Theta_{[\widehat{m+1}]}}^{m,n-1}$ given by;
	\[
	\varphi(s_i^{m,n-1})= \big(\Pi_{l=m+2}^{m+n-1}e^{2\pi \textrm{i} \theta_{m+1,l}}\big) s_i^{m,n-1}
	\]
	for $1\leq i \leq m+n, i \neq m+1$.  Using the definition $(3.1)$ given in \cite{Mor-2013aa}, one can verify that
	\[
	I_{m,n}\cong \mathcal{K}\otimes_{\varphi} C_{\Theta_{[\widehat{m+1}]}}^{m,n-1}.
	\]
	By applying Lemma $2.2$ in \cite{Mor-2013aa}, we get
	\[
	I_{m,n}\cong \mathcal{K}\otimes_{\varphi} C_{\Theta_{[\widehat{m+1}]}}^{m,n-1} \cong \mathcal{K}\otimes C_{\Theta_{[\widehat{m+1}]}}^{m,n-1}.
	\]
\end{proof}

\begin{remark} \label{rem} Note that by Lemma $2.2$ in \cite{Mor-2013aa},  the element $1-s_{m+1}^{m,n}(s_{m+1}^{m,n})^* \in C_{\Theta}^{m,n}$ can be identified with $p \otimes 1 \in \mathcal{K} \otimes C_{\Theta_{[\widehat{m+1}]}}^{m,n-1}$ under the above isomorohism.
\end{remark}
Consider the map $\tau_{m,n}: C_{\Theta_{[\widehat{m+1}]}}^{m,n} \longrightarrow C_{\Theta_{[\widehat{m+1}]}}^{m,n}$
given by
\[
\tau_{m,n}(s_i^{m,n})=\begin{cases}
	s_{m+1}^{m+1,n}\Psi^{m+1,n}(s_i^{m+1,n})(s_{m+1}^{m+1,n})^* & \mbox{ if } 1 \leq i \leq m; \cr
	s_{m+1}^{m+1,n}\Psi^{m+1,n}(s_{i+1}^{m+1,n})(s_{m+1}^{m+1,n})^* & \mbox{ if } m+1 \leq i \leq m+n; \cr
\end{cases}
\]
Using the explicit description of the operators $T_i^{m,n}$ in the previous section, it is not difficult to verify that  $\{\tau_{m,n}(s_i^{m,n}): 1\leq i \leq m+n\}$ satisfy the defining relations (\ref{isometry}) of $C_{\Theta_{[\widehat{m+1}]}}^{m,n}$.
This shows  that $\tau_{m,n}$ is an automorphism of $C_{\Theta_{[\widehat{m+1}]}}^{m,n}$.

\begin{proposition}\label{cross-product}
	Let $m,n \in \mathbb{N}_0$ such that $m+n\geq 1$. Let $\Theta \in \mathbb{R}^{m+n \choose 2}$. Then
	one has the following.
	\[
	C_{\Theta_{[\widehat{m+1}]}}^{m,n}\rtimes_{\tau_{m,n}} \mathbb{Z} \cong  C_{\Theta}^{m+1,n}.
	\]
\end{proposition}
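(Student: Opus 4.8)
The plan is to prove the isomorphism by producing mutually inverse $*$-homomorphisms coming from universal properties. Write $A:=C_{\Theta_{[\widehat{m+1}]}}^{m,n}$ with canonical generators $r_1,\dots ,r_{m+n}$ ($r_1,\dots ,r_m$ unitary, $r_{m+1},\dots ,r_{m+n}$ isometric) and let $\theta'_{ij}$ ($1\le i<j\le m+n$) denote its parameter; write $B:=C_{\Theta}^{m+1,n}$ with canonical generators $t_1,\dots ,t_{m+1+n}$ ($t_1,\dots ,t_{m+1}$ unitary, $t_{m+2},\dots ,t_{m+1+n}$ isometric). Let $\sigma\colon\{1,\dots ,m+n\}\to\{1,\dots ,m+1+n\}\setminus\{m+1\}$ be the order-preserving bijection, i.e. $\sigma(i)=i$ for $i\le m$ and $\sigma(i)=i+1$ for $i>m$; by the very definition of $\Theta_{[\widehat{m+1}]}$ one then has $\theta'_{ij}=\theta_{\sigma(i)\sigma(j)}$. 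Let $u$ be the canonical unitary in $A\rtimes_{\tau_{m,n}}\mathbb{Z}$ implementing $\tau_{m,n}$, so $u\,a\,u^{*}=\tau_{m,n}(a)$ for $a\in A$ (with the opposite convention one replaces $u$ by $u^{*}$ below).

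The one computational ingredient I would record first is that in $B$ the unitary $t_{m+1}$ normalises every other generator up to a scalar. Indeed the relation $t_i^{*}t_{m+1}=e^{-2\pi\mathrm{i}\theta_{i,m+1}}t_{m+1}t_i^{*}$ (valid for $i\le m$) rearranges to $t_{m+1}t_it_{m+1}^{*}=e^{-2\pi\mathrm{i}\theta_{i,m+1}}t_i$, and $t_{m+1}^{*}t_k=e^{-2\pi\mathrm{i}\theta_{m+1,k}}t_kt_{m+1}^{*}$ (valid for $k>m+1$) rearranges to $t_{m+1}t_kt_{m+1}^{*}=e^{2\pi\mathrm{i}\theta_{m+1,k}}t_k$. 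Thus $\operatorname{Ad}(t_{m+1})$ sends $t_{\sigma(i)}$ to $\mu_i\,t_{\sigma(i)}$, where $\mu_i=e^{-2\pi\mathrm{i}\theta_{i,m+1}}$ for $i\le m$ and $\mu_i=e^{2\pi\mathrm{i}\theta_{m+1,i+1}}$ for $i>m$. Reading the definition of $\tau_{m,n}$ through the faithful representation $\Psi^{m+1,n}$ of Theorem~\ref{faithful} (so that $s_{m+1}^{m+1,n}$ and $s_j^{m+1,n}$ become $T_{m+1}^{m+1,n}$ and $T_j^{m+1,n}$), the same manipulation gives $\tau_{m,n}(r_i)=\mu_i\,r_i$ for every $i$; equivalently, $\tau_{m,n}$ is just the diagonal automorphism of $A$ that scales $r_i$ by $\mu_i$, which indeed preserves the twisted relations of $A$ because each such relation is left invariant by any phase rescaling of the generators.

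Next I would assemble the two maps. Since the $t_{\sigma(i)}$ have the right unitary/isometry type and $\theta'_{ij}=\theta_{\sigma(i)\sigma(j)}$, the assignment $r_i\mapsto t_{\sigma(i)}$ respects the defining relations of $A$ and hence gives a $*$-homomorphism $j\colon A\to B$; as $t_{m+1}\in B$ is a unitary with $t_{m+1}\,j(r_i)\,t_{m+1}^{*}=\mu_i t_{\sigma(i)}=j(\tau_{m,n}(r_i))$, the universal property of the crossed product furnishes $\psi\colon A\rtimes_{\tau_{m,n}}\mathbb{Z}\to B$ with $\psi|_A=j$ and $\psi(u)=t_{m+1}$. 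In the other direction, $u$ is a unitary and the $r_i$ are unitaries/isometries in $A\rtimes_{\tau_{m,n}}\mathbb{Z}$, the $r_i$ satisfy the twisted relations of $B$ among themselves (again by the parameter match), and the relations of $B$ involving $t_{m+1}$ become, after the substitution $t_{m+1}\mapsto u$, $t_{\sigma(i)}\mapsto r_i$, the identities $u\,r_i\,u^{*}=\mu_i r_i$, which hold because $u\,r_i\,u^{*}=\tau_{m,n}(r_i)=\mu_i r_i$; hence the universal property of $B$ furnishes $\varphi\colon B\to A\rtimes_{\tau_{m,n}}\mathbb{Z}$ with $\varphi(t_{m+1})=u$ and $\varphi(t_{\sigma(i)})=r_i$. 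Finally $\psi$ and $\varphi$ are mutually inverse, since on the generating sets $\psi\varphi(t_{m+1})=t_{m+1}$, $\psi\varphi(t_{\sigma(i)})=t_{\sigma(i)}$ and $\varphi\psi(u)=u$, $\varphi\psi(r_i)=r_i$.

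I do not expect a genuine obstacle: the statement is a formal consequence of the two universal properties, once one sees that adjoining the unitary $s_{m+1}^{m+1,n}$ to the $C^{*}$-subalgebra generated by the remaining generators is exactly the crossed product by the phase-scaling automorphism $\tau_{m,n}$. The only point requiring care is the bookkeeping: keeping the index shift $\sigma$ and the induced identification of the parameter tuples $\Theta$ and $\Theta_{[\widehat{m+1}]}$ straight, and reading $\tau_{m,n}$ correctly as $\operatorname{Ad}\!\big(s_{m+1}^{m+1,n}\big)$ transported to $A$, so that the phases $\mu_i$ produced in the two constructions genuinely coincide.
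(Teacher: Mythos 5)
Your proposal is correct and follows essentially the same route as the paper: both identify $C_{\Theta_{[\widehat{m+1}]}}^{m,n}\rtimes_{\tau_{m,n}}\mathbb{Z}$ with the universal $C^*$-algebra generated by $C_{\Theta_{[\widehat{m+1}]}}^{m,n}$ and a unitary $u$ implementing $\tau_{m,n}$, compute that $\tau_{m,n}$ scales each generator by the phase $e^{-2\pi\mathrm{i}\theta_{i,m+1}}$ (resp. $e^{2\pi\mathrm{i}\theta_{m+1,i+1}}$), and match this presentation with that of $C_{\Theta}^{m+1,n}$ under $u\mapsto s_{m+1}^{m+1,n}$. You merely spell out both universal-property maps and their mutual inverseness, where the paper asserts the resulting isomorphism directly.
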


\begin{proof} By \cite[Corollary 9.4.23]{NCPhil}, it follows that the $C^{*}$-algebra $C_{\Theta_{[\widehat{m+1}]}}^{m,n}\rtimes_{\tau_{m,n}} \mathbb{Z}$ is the universal $C^{*}$ algebra generated by $C_{\Theta_{[\widehat{m+1}]}}^{m,n}$ and a unitary $u$ subject to the relation $uau^{*}=\tau_{m,n}(a)$ for all $a \in C_{\Theta_{[\widehat{m+1}]}}^{m,n}$.
	From the definition of the map $\tau_{m,n}$, one can verify that
	\[
	us_i^{m,n}u^*=\begin{cases}
		e^{-2\pi \textrm{i}\theta_{i,m+1}N}s_i^{m,n} & \mbox{ if }  i \leq m; \cr
		e^{2\pi \textrm{i}\theta_{m+1,i+1}N}s_i^{m,n} & \mbox{ if } i>m; \cr
	\end{cases}
	\]
	This gives the following relations.
	\[
	u^*s_i^{m,n}=\begin{cases}
		e^{2\pi \textrm{i}\theta_{i,m+1}N}s_i^{m,n}u^* & \mbox{ if }  i \leq m; \cr
		e^{-2\pi \textrm{i}\theta_{m+1,i+1}N}s_i^{m,n}u^*& \mbox{ if } i>m; \cr
	\end{cases}
	\]
	Therefore we get  an isomorphism $\phi:C_{\Theta_{[\widehat{m+1}]}}^{m,n}\rtimes_{\tau_{m,n}} \mathbb{Z}\rightarrow C_{\Theta}^{m+1,n}$  mapping
	$$ u \mapsto 	s_{m+1}^{m+1,n} \qquad \mbox{ and }  \qquad s_{i}^{m,n}\rightarrow \begin{cases}
		s_{i}^{m+1,n}, & \mbox{ if } 1 \leq i \leq m, \cr
		s_{i+1}^{m+1,n}, & \mbox{ if } m+1 \leq i \leq m+n. \cr
	\end{cases}$$
\end{proof}

\begin{remark} Note that these results implicitly involve a reindexing of parameters.
\end{remark}
\begin{theorem}\label{$K$-groups}
	Let $n \in \mathbb{N}$. Then we have
	$$K_0\big(C_{\Theta}^{0,n})\big)= \langle [1]\rangle \cong \mathbb{Z} \, \mbox{ and }\, K_1\big(C_{\Theta}^{0,n})\big)=0.$$
\end{theorem}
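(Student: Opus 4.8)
The plan is to compute $K_*(C_\Theta^{0,n})$ by induction on $n$, using the short exact sequence $\chi_{0,n}$ from Proposition~\ref{exact-sequence} together with the identification of its ideal provided by Proposition~\ref{twisted}. For the base case $n=1$ we have $C_\Theta^{0,1}=\mathcal{T}$, the Toeplitz algebra, for which $K_0=\mathbb{Z}\langle[1]\rangle$ and $K_1=0$ are classical. For the inductive step, consider
\[
0\longrightarrow I_{0,n}\longrightarrow C_\Theta^{0,n}\xrightarrow{\ \beta_0\ } C_\Theta^{1,n-1}\longrightarrow 0,
\]
where by Proposition~\ref{twisted} we have $I_{0,n}\cong \mathcal{K}\otimes C_{\Theta_{[\widehat{1}]}}^{0,n-1}$, so that $K_0(I_{0,n})\cong K_0(C_{\Theta_{[\widehat{1}]}}^{0,n-1})\cong\mathbb{Z}$ and $K_1(I_{0,n})=0$ by the inductive hypothesis plus stability of $K$-theory under tensoring with $\mathcal{K}$. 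I would therefore also need to know $K_*(C_\Theta^{1,n-1})$, and more generally $K_*(C_\Theta^{m,n})$ for all $m$; this suggests running a simultaneous (double) induction, first pinning down $K_*(C_\Theta^{m,0})=K_*(\mathcal{A}_\Theta^m)$ — the noncommutative $m$-torus, whose $K$-groups are known to be $\mathbb{Z}^{2^{m-1}}$ in each degree — and then handling $n\geq 1$.

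The cleaner route for the $n\geq 1$ part is to use Proposition~\ref{cross-product}, which realizes $C_\Theta^{m+1,n}\cong C_{\Theta_{[\widehat{m+1}]}}^{m,n}\rtimes_{\tau_{m,n}}\mathbb{Z}$, and feed this into the Pimsner–Voiculescu exact sequence. That gives a six-term sequence relating $K_*(C_\Theta^{m+1,n})$ to $K_*(C_{\Theta_{[\widehat{m+1}]}}^{m,n})$ via the map $1-(\tau_{m,n})_*$. For the specific claim about $C_\Theta^{0,n}$, I would instead iterate $\chi_{0,n}$: the connecting/index maps in the six-term sequence for $\chi_{0,n}$ must be computed. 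The key point is that $\beta_0$ sends the isometry $s_1^{0,n}$ to a unitary while killing its defect projection $1-s_1s_1^*$, which generates $K_0(I_{0,n})$; one expects the inclusion $K_0(I_{0,n})\to K_0(C_\Theta^{0,n})$ to be injective (both being $\mathbb{Z}$ generated by a projection), forcing the index map $K_1(C_\Theta^{1,n-1})\to K_0(I_{0,n})$ to vanish, and then a dimension count around the six-term sequence yields $K_0(C_\Theta^{0,n})=\mathbb{Z}$ and $K_1(C_\Theta^{0,n})=0$, with $[1]$ as generator. Here one uses that $[1]_{C_\Theta^{0,n}}$ maps to $[1]_{C_\Theta^{1,n-1}}=[1]_{\mathcal{A}_{\cdots}^{1}}$, tracking the generator explicitly.

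Concretely, the steps in order: (1) record $K_*(\mathcal{A}_\Theta^m)$ and the class of $[1]$ there; (2) establish the base case $C_\Theta^{0,1}=\mathcal{T}$; (3) set up the six-term exact sequence for $\chi_{0,n}$, substituting $K_*(I_{0,n})\cong K_*(C_{\Theta_{[\widehat 1]}}^{0,n-1})$ from Proposition~\ref{twisted} and Remark~\ref{rem} (which identifies the defect projection with $p\otimes 1$, hence its $K_0$-class with $[1]$ of the quotient-type algebra); (4) identify $K_*(C_\Theta^{1,n-1})$ — most economically via Pimsner–Voiculescu applied to Proposition~\ref{cross-product} with $m=0$, noting that $(\tau_{0,n-1})_*$ is the identity on $K$-theory since $\tau$ is homotopic to the identity through the rotation action, so $1-(\tau)_*=0$ and the PV sequence splits; (5) show the index map in the six-term sequence for $\chi_{0,n}$ is zero and the exponential map is zero, by tracking generators and using that the inclusion of the ideal is injective on $K_0$; (6) conclude $K_0=\mathbb{Z}\langle[1]\rangle$, $K_1=0$.

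The main obstacle I anticipate is step (5): proving that the boundary maps in the six-term sequence vanish and, in particular, that the generator $[1-s_1s_1^*]\in K_0(I_{0,n})$ does \emph{not} become trivial in $K_0(C_\Theta^{0,n})$ while simultaneously $[1]$ survives — i.e.\ distinguishing the class of the defect projection from $0$ and from $[1]$. Since $\mathcal{T}$ already exhibits the phenomenon that $1-SS^*$ is a rank-one projection that is $K_0$-equivalent to $[1]$ (both generate $K_0(\mathcal{T})=\mathbb{Z}$, with $SS^*$ Murray–von Neumann equivalent to $1$), one expects $[1-s_1s_1^*]=[1]$ in $K_0(C_\Theta^{0,n})$ as well; making this rigorous in the twisted setting, and checking it is compatible with exactness so that the whole sequence forces $K_1=0$, is where the real work lies. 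A faithful-representation argument (Theorem~\ref{faithful}) realizing $C_\Theta^{0,n}$ on $\mathcal{H}^{0,n}=\ell^2(\mathbb{N})^{\otimes n}$, where $s_1s_1^*$ is Murray–von Neumann equivalent to $1$ via $s_1$ itself, should settle $[1-s_1s_1^*]=0$ in $K_0$... wait — in $\mathcal{T}$ it is $[1-SS^*]=[1]-[SS^*]=[1]-[1]=0$ is \emph{false} because $K_0(\mathcal{T})=\mathbb{Z}$ is generated by $[1]$ and $[1-SS^*]$ is the generator's negative-of-nothing; the correct statement is $[1-SS^*]=0$ in $\widetilde{K}_0$ but $=[1]$ under the unital identification — so care with reduced versus unreduced $K_0$ is exactly the subtle point to get right.
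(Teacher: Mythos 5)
Your overall architecture matches the paper's: induction on $n$ with base case $\mathcal{T}$, Pimsner--Voiculescu applied to the crossed-product realization $C_{\Theta_{[\widehat{1}]}}^{0,n-1}\rtimes_{\tau_{0,n-1}}\mathbb{Z}\cong C_{\Theta}^{1,n-1}$ (using $\tau_{0,n-1}\sim_h 1$ so that $1-(\tau_{0,n-1})_*=0$), and then the six-term sequence of $\chi_{0,n}$ with $K_*(I_{0,n})$ identified via Proposition~\ref{twisted}. But your step (5) contains a genuine error that would derail the computation. You assert that the inclusion $K_0(I_{0,n})\to K_0(C_{\Theta}^{0,n})$ should be injective and hence that the index map $\partial\colon K_1(C_{\Theta}^{1,n-1})\to K_0(I_{0,n})$ vanishes. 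The opposite is true: $\partial$ is an \emph{isomorphism}, and $i_*$ on $K_0$ is the \emph{zero} map. Indeed $s_1^{0,n}$ is an isometry lifting the unitary $s_1^{1,n-1}$, so the standard index-map formula gives $\partial\bigl([s_1^{1,n-1}]\bigr)=[1-s_1^{0,n}(s_1^{0,n})^*]$, which is exactly the generator of $K_0(I_{0,n})\cong\mathbb{Z}$ supplied by Remark~\ref{rem}; this is the Toeplitz index map in twisted clothing. Consistently, $[1-s_1s_1^*]=[1]-[s_1s_1^*]=[1]-[s_1^*s_1]=0$ in $K_0(C_{\Theta}^{0,n})$, so the defect projection's class dies under $i_*$. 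If you instead ran the dimension count with $\partial=0$ and $i_*$ injective as proposed, exactness would force $K_0(C_{\Theta}^{0,n})\cong\mathbb{Z}^2$ and $K_1(C_{\Theta}^{0,n})\cong\mathbb{Z}$, contradicting the statement you are trying to prove.

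Your closing paragraph senses the difficulty but misdiagnoses it: the issue is not reduced versus unreduced $K_0$, and the claim that $[1-SS^*]$ and $[1]$ both generate $K_0(\mathcal{T})$ is false. In $K_0(\mathcal{T})$ one has $[1]=[1-SS^*]+[SS^*]$ with $[SS^*]=[S^*S]=[1]$, hence $[1-SS^*]=0$; the map $K_0(\mathcal{K})\to K_0(\mathcal{T})$ is zero, and the Toeplitz index map is onto. Once you replace your step (5) with the correct computation $\partial([s_1^{1,n-1}])=[1-s_1s_1^*]$, $\delta\equiv 0$, and $(\beta_1)_*[1]=[1]$, the six-term sequence gives $K_1(C_{\Theta}^{0,n})=0$ (since $K_1(I_{0,n})=0$ and $\partial$ is injective) and $K_0(C_{\Theta}^{0,n})\cong K_0(C_{\Theta}^{1,n-1})\cong\mathbb{Z}$ generated by $[1]$ (since $i_*=0$ on $K_0$ and $(\beta_1)_*$ is then an isomorphism), which is precisely how the paper concludes.
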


\begin{proof} First note that $C^{0,1}=\mathcal{T}$.  Hence  $K_0\big(C^{0,1})\big)=\mathbb{Z}$ generated by $[1]$ and $K_1\big(C^{0,1})\big)=\{0\}$ (see \cite{RorLarLau-2000aa}). Assume  that $K_0\big(C_{\Theta_{[\widehat{1}]}}^{0,n-1})\big)=\mathbb{Z}$ generated by $[1]$ and $K_1\big(C_{\Theta_{[\widehat{1}]}}^{0,n-1})\big)=\{0\}$. From Proposition (\ref{cross-product}), we have
	\[
	C_{\Theta_{[\widehat{1}]}}^{0,n-1}\rtimes_{\tau_{0,n-1}} \mathbb{Z} \cong  C_{\Theta}^{1,n-1}.
	\]
	The associated Pimsner-Voiculescu six term exact sequence  of $K$-groups is given by the following commutative diagram (see \cite{Bla-1998aa} for details).
	\begin{center}
		\begin{tikzpicture}[node distance=1cm,auto]
			\tikzset{myptr/.style={decoration={markings,mark=at position 1 with %
						{\arrow[scale=2,>=stealth]{>}}},postaction={decorate}}}
			\node (A){$ K_0(C_{\Theta_{[\widehat{1}]}}^{0,n-1})$};
			\node (Up)[node distance=2cm, right of=A][label=above:$id-(\tau_{0,n-1}){*}$]{};
			\node (B)[node distance=5cm, right of=A]{$K_0(C_{\Theta_{[\widehat{1}]}}^{0,n-1})$};
			\node (Up)[node distance=2cm, right of=B][label=above:$i_*$]{};
			\node (C)[node distance=5cm, right of=B]{$K_0(C_{\Theta_{[\widehat{1}]}}^{0,n-1}\rtimes_{\tau_{0,n-1}} \mathbb{Z})$};
			\node (D)[node distance=2cm, below of=C]{$K_1(C_{\Theta_{[\widehat{1}]}}^{0,n-1})$};
			\node (E)[node distance=5cm, left of=D]{$K_1(C_{\Theta_{[\widehat{1}]}}^{0,n-1})$};
			\node (F)[node distance=5cm, left of=E]{$K_1(C_{\Theta_{[\widehat{1}]}}^{0,n-1}\rtimes_{\tau_{0,n-1}} \mathbb{Z})$};
			\node (Below)[node distance=2cm, left of=D][label=below:$id-(\tau_{0,n-1}){*}$]{};
			\node (Below)[node distance=2cm, left of=E][label=below:$i_*$]{};
			\draw[myptr](A) to (B);
			\draw[myptr](B) to (C);
			\draw[myptr](C) to node{{ $\delta$}}(D);
			\draw[myptr](D) to (E);
			\draw[myptr](E) to (F);
			\draw[myptr](F) to node{{ $\partial$}}(A);
		\end{tikzpicture}
	\end{center}
	\noindent
	Since  $\tau_{0,n-1} \sim_{h} 1$, we have  $id-(\tau_{0,n-1}){*}=0$. Using this and the induction hypothesis, it follows that  the maps $i_{*}$ and $\partial$ are  isomorphisms. Hence we have
	$$K_0\big(C_{\Theta}^{1,n-1})\big)=
	\langle [1] \rangle \cong \mathbb{Z}\quad  \mbox { and } \quad  K_1\big(C_{\Theta}^{1,n-1})\big)=\langle [s_1^{1,n-1}] \rangle \cong \mathbb{Z}.$$
	The  six term exact sequence of $K$-groups associated with the exact sequence  of Proposition (\ref{exact-sequence}) is given by the following  diagram.
	\begin{center}
		\begin{tikzpicture}[node distance=1cm,auto]
			\tikzset{myptr/.style={decoration={markings,mark=at position 1 with %
						{\arrow[scale=2,>=stealth]{>}}},postaction={decorate}}}
			\node (A){$ K_0(I_n)$};
			\node (Up)[node distance=2cm, right of=A][label=above:$i_*$]{};
			\node (B)[node distance=4cm, right of=A]{$K_0(C_{\Theta}^{0,n})$};
			\node (Up)[node distance=2cm, right of=B][label=above:$(\beta_1)_*$]{};
			\node (C)[node distance=5cm, right of=B]{$K_0(C_{\Theta}^{1,n-1})$};
			\node (D)[node distance=2cm, below of=C]{$K_1(I_n)$};
			\node (E)[node distance=4cm, left of=D]{$K_1(C_{\Theta}^{0,n})$};
			\node (F)[node distance=5cm, left of=E]{$K_1(C_{\Theta}^{1,n-1})$};
			\node (Below)[node distance=2cm, left of=D][label=below:$i_{*}$]{};
			\node (Below)[node distance=2cm, left of=E][label=below:$(\beta_1)_*$]{};
			\draw[myptr](A) to (B);
			\draw[myptr](B) to (C);
			\draw[myptr](C) to node{{ $\delta$}}(D);
			\draw[myptr](D) to (E);
			\draw[myptr](E) to (F);
			\draw[myptr](F) to node{{ $\partial$}}(A);
		\end{tikzpicture}
	\end{center}
	From the induction hypothesis, Proposition (\ref{twisted}) and Remark \ref{rem}, it follows that  $K_0\big(I_n\big)=\mathbb{Z}$ generated by $[p\otimes 1]=[1-s_{1}^{0,n}(s_{1}^{0,n})^{*}]$ and
	$K_1\big(I_n\big)=0$. Moreover, we have
	\[
	\partial([s_1^{1,n-1}] )=[1-s_{1}^{0,n}(s_{1}^{0,n})^{*}], \quad (\beta_1)_*([1])=[1]  \quad \mbox{ and } \quad \delta \equiv 0
	\]
	Using these facts, we get the claim.
\end{proof}

By \cite{Rie-1987aa}, it follows that  both the $K$-groups of  $\mathcal{A}_{\Theta_{[m]}}^{m}$ are isomorphic to $ \mathbb{Z}^{2^{m-1}}$. We denote the generators of $K_0\big(\mathcal{A}_{\Theta_{[m]}}^{m}\big)$ by $\{[\mathbb{P}_i]: 1 \leq i \leq 2^{m-1}\}$ and the generators of $K_1\big(\mathcal{A}_{\Theta_{[m]}}^{m}\big)$ by $\{[\mathbb{U}_i]: 1 \leq i \leq 2^{m-1}\}$. If we need to specify the generators $v_{1},...,v_{m}$ of $\mathcal{A}_{\Theta_{[m]}}^{m}$, then we denote the generators of $K_0\big(\mathcal{A}_{\Theta_{[m]}}^{m})\big)$ by $\{[\mathbb{P}_i(v_{1},\cdots,v_{m})]: 1 \leq i \leq 2^{m-1}\}$ and the generators of $K_1\big(\mathcal{A}_{\Theta_{[m]}}^{m})\big)$ by $\{[\mathbb{U}_i(v_{1},\cdots,v_{m})]: 1 \leq i \leq 2^{m-1}\}$. In the following, we treat
$C_{\Theta_{[m]}}^{m,0}$  as a subalgebra of $C_{\Theta}^{m,n}$ generated by $s_1^{m,n},s_2^{m,n}, \cdots s_m^{m,n}$, and hence $[\mathbb{P}_i]$ and $[\mathbb{U}_i]$ are elements of $K_0\big(C_{\Theta}^{m,n})\big)$  and $K_1\big(C_{\Theta}^{m,n})\big)$, respectively. However, one needs to check whether these classes are nontrivial, and the following theorem establishes this.

\begin{theorem} \label{K-groups general}
	Let  $m,n \in \mathbb{N}_0$ such that $m+n> 1$. Let $\Theta \in \mathbb{R}^{m+n \choose 2}$.
	Then we have
	$$K_0\big(C_{\Theta}^{m,n})\big)=
	\begin{cases}
		\mathbb{Z}^{2^{m-1}} & \mbox{ if } m\geq 1, \cr
		\mathbb{Z}& \mbox{ if } m =0. \cr
	\end{cases}
	\quad \mbox{ and } \quad
	K_1\big(C_{\Theta}^{m,n})\big)=
	\begin{cases}
		\mathbb{Z}^{2^{m-1}} & \mbox{ if } m\geq 1, \cr
		0& \mbox{ if } m =0. \cr
	\end{cases}.$$
	Moreover,  $K_0\big(C_{\Theta}^{m,n})\big)$ is generated by   $\{[\mathbb{P}_i]: 1 \leq i \leq 2^{m-1}\}$ and $K_1\big(C_{\Theta}^{m,n})\big)$ is generated by  $\{[\mathbb{U}_i]: 1 \leq i \leq 2^{m-1}\}$.
\end{theorem}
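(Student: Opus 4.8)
The plan is to reduce the whole statement to the single claim that the canonical inclusion $\Phi\colon \mathcal{A}_{\Theta_{[m]}}^{m}\hookrightarrow C_{\Theta}^{m,n}$, $s_i^{m,0}\mapsto s_i^{m,n}$ — which is injective by Proposition \ref{injective} applied to $I=\{1,\dots,m\}$ — induces isomorphisms on $K_0$ and on $K_1$. Granting this, the case $m=0$ is exactly Theorem \ref{$K$-groups}, while for $m\geq 1$ one obtains $K_0(C_{\Theta}^{m,n})\cong K_0(\mathcal{A}_{\Theta_{[m]}}^{m})$ and $K_1(C_{\Theta}^{m,n})\cong K_1(\mathcal{A}_{\Theta_{[m]}}^{m})$, both isomorphic to $\mathbb{Z}^{2^{m-1}}$ by Rieffel \cite{Rie-1987aa}; and, since by definition (see the paragraph preceding the theorem) the classes $[\mathbb{P}_i]$ and $[\mathbb{U}_i]$ are the images under $\Phi_*$ of Rieffel's generators of $K_*(\mathcal{A}_{\Theta_{[m]}}^{m})$, they form a generating set of $K_*(C_{\Theta}^{m,n})$ and are nontrivial because $\Phi_*$ is injective. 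Thus the entire task becomes: show that $\Phi$ is a $K$-equivalence.

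I would prove this by induction on $m$, with the conventions $\mathcal{A}^{0}_{\Theta}:=\mathbb{C}$ and $\mathcal{A}^{1}_{\Theta}:=C(\bbbt)$. The base case $m=0$ follows immediately from Theorem \ref{$K$-groups}, which gives $K_0(C^{0,n}_{\Theta})=\mathbb{Z}\langle[1]\rangle$ and $K_1(C^{0,n}_{\Theta})=0$, so that $\mathbb{C}\hookrightarrow C^{0,n}_{\Theta}$ is a $K$-equivalence; and for $m=1,n=0$ the map $\Phi$ is the identity of $C(\bbbt)$. For the inductive step, fix $m\geq 1$ with $m+n>1$ (and $n\geq 1$ when $m=1$) and present both algebras as crossed products by $\mathbb{Z}$ that split off the $m$-th generator: on the torus side $\mathcal{A}_{\Theta_{[m]}}^{m}\cong \mathcal{A}_{\Theta_{[m-1]}}^{m-1}\rtimes_{\alpha}\mathbb{Z}$ with $\alpha=\mathrm{Ad}\,s_m^{m,0}$, and, after the reindexing of parameters discussed earlier, Proposition \ref{cross-product} gives $C_{\Theta}^{m,n}\cong C_{\Theta_{[\widehat m]}}^{m-1,n}\rtimes_{\tau}\mathbb{Z}$ with $\tau=\tau_{m-1,n}$ (conjugation by the $m$-th generator). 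The key observation is that the previous-stage inclusion $\Phi'\colon \mathcal{A}_{\Theta_{[m-1]}}^{m-1}\hookrightarrow C_{\Theta_{[\widehat m]}}^{m-1,n}$ is $(\alpha,\tau)$-equivariant: on each torus generator $v_j$, $j\leq m-1$, both automorphisms act by the same scalar $v_j\mapsto e^{-2\pi\mathrm{i}\theta_{jm}}v_j$, this being nothing but the defining commutation relation between $s_j$ and $s_m$. Consequently $\Phi'$ extends to a homomorphism of the two crossed products which, under the isomorphisms just described, is identified with $\Phi$.

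I would then invoke the naturality of the Pimsner--Voiculescu six-term exact sequence. The equivariant map $\Phi'$ produces a commuting ladder between the Pimsner--Voiculescu sequences of $\mathcal{A}_{\Theta_{[m-1]}}^{m-1}\rtimes_{\alpha}\mathbb{Z}$ and of $C_{\Theta_{[\widehat m]}}^{m-1,n}\rtimes_{\tau}\mathbb{Z}$, in which four of the six vertical arrows are $\Phi'_*$ on the $K_0$ and $K_1$ of the coefficient algebras and the remaining two are $\Phi_*$. By the induction hypothesis $\Phi'_*$ is an isomorphism in both degrees, so the Five Lemma forces $\Phi_*$ to be an isomorphism as well, completing the induction. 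Note that this uses only naturality of the sequence and the inductive hypothesis, and in particular does not require knowing that $\alpha_*$ or $\tau_*$ is trivial on $K$-theory; alternatively, mirroring the $m=0$ argument of Theorem \ref{$K$-groups}, one could observe directly that $\tau\sim_h\mathrm{id}$ (it rescales generators only by the phases attached to the removed index $m$, which do not occur in $\Theta_{[\widehat m]}$, hence can be scaled down to $1$ through automorphisms of the same algebra) and read the groups off the two resulting split short exact sequences.

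The step I expect to be the genuine obstacle is organizational rather than conceptual: making the reindexing in Proposition \ref{cross-product} explicit enough that on both sides it is literally the \emph{same} generator $s_m$ that gets split off, so that $\Phi'$ is honestly $(\alpha,\tau)$-equivariant and $\Phi'\rtimes\mathrm{id}$ really is $\Phi$; together with the bookkeeping for the degenerate small cases ($m=1$, where the coefficient algebra collapses to $\mathbb{C}$, and $n=0$ versus $n\geq 1$). The substantive inputs — Rieffel's computation of $K_*(\mathcal{A}_{\Theta}^{m})$ with its explicit generators, and the naturality and Five-Lemma formalism for the Pimsner--Voiculescu sequence — are all off the shelf, so once the crossed-product picture is correctly in place the argument is essentially formal.
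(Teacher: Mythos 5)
Your proposal is correct, and it rests on the same backbone as the paper's own argument: the iterated crossed-product decomposition coming from Proposition (\ref{cross-product}), the Pimsner--Voiculescu sequence at each stage, and Theorem (\ref{$K$-groups}) as the base case. Where you genuinely diverge is in how the Pimsner--Voiculescu data is exploited. The paper decomposes $C_{\Theta}^{m,n}$ as $\big((C_{\Theta_{>m}}^{0,n}\rtimes\mathbb{Z})\rtimes\mathbb{Z}\big)\cdots\rtimes\mathbb{Z}$ and then computes the groups at each stage directly, ``as in the noncommutative torus case,'' which requires knowing that each $\mathrm{id}-(\tau)_*$ vanishes and leaves the assertion about the generators $[\mathbb{P}_i]$, $[\mathbb{U}_i]$ largely implicit. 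You instead set up the comparison map $\Phi\colon\mathcal{A}_{\Theta_{[m]}}^{m}\hookrightarrow C_{\Theta}^{m,n}$, check that the stage-$(m-1)$ inclusion is equivariant for the two conjugation actions (which is indeed just the commutation relation with $s_m$), and run naturality of Pimsner--Voiculescu plus the Five Lemma. This buys two things: you never need to identify the action on $K$-theory as trivial (only the induction hypothesis and naturality), and the nontriviality and generating property of $[\mathbb{P}_i]$, $[\mathbb{U}_i]$ fall out immediately since they are by definition the $\Phi_*$-images of Rieffel's generators and $\Phi_*$ is shown to be an isomorphism. The cost is exactly the bookkeeping you flag: making the reindexing in Proposition (\ref{cross-product}) explicit enough that the same generator is split off on both sides, so that the induced map of crossed products really is $\Phi$. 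That is a fair trade, and arguably your version is the more complete proof of the statement as written.
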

\begin{proof} Let $\Theta_{> m}=\{\theta_{ij}: m+1 \leq i<j\leq m+n\}$. Using  Proposition (\ref{cross-product}) iteratively, we get
	\[
	C_{\Theta}^{m,n}
	\cong \Big(\big((C_{\Theta_{> m}}^{0,n}\rtimes_{\tau_{0,n}} \mathbb{Z})\rtimes_{\tau_{1,n}}\mathbb{Z} \big) \cdots  \rtimes_{\tau_{m-1,n}} \mathbb{Z}\Big).
	\]
	The claim follows by Theorem \ref{$K$-groups} and a similar calculations as done in case of non-commutative torus (\cite{PimVoi-1980aa}) using Pimsner-Voiculescu six term exact sequences coming from the above crossed products.
\end{proof}
\qed
\begin{theorem}\label{$K$-groups Ideals}
	Let $n,k \in \mathbb{N}$ such that $1 \leq k\leq n$. Let $J_{k}$ be the ideal of $C_{\Theta}^{0,n}$ generated by the projection  $1-\overrightarrow{\prod_{i=1}^{k}}s_{i}^{0,n}\overrightarrow{\prod_{i=1}^{k}}(s_{i}^{0,n})^*$
	Then the following hold.
	\begin{enumerate}[(i)]
		\item $K_{0}(J_{k})=\mathbb{Z}^{2^{k-1}}$ generated by $\{[\mathbb{P}_i]: 1 \leq i \leq 2^{k-1}\}$.
		\item $K_{1}(J_{k})=\mathbb{Z}^{2^{k-1}-1}$ generated by $\{[\mathbb{U}_i]: 1 \leq i \leq 2^{k-1}-1,\,\,\textrm{for all}\,\, i,\}$, where  $[\mathbb{U}_i] \neq [1] \}$ for any $i$.
	\end{enumerate}
\end{theorem}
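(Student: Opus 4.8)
The plan is to run an induction on $k$ using the short exact sequences $\chi_{0,n}^{k}$ from Proposition \ref{exact-sequence1}, together with the description of the ideal $I_{m,n}$ in Proposition \ref{twisted} and the $K$-computations already in hand (Theorem \ref{$K$-groups} and Theorem \ref{K-groups general}). First I would set up the induction base: for $k=1$, the ideal $J_1 = I_{0,n}$ is generated by $1 - s_1^{0,n}(s_1^{0,n})^*$, and by Proposition \ref{twisted} and Remark \ref{rem} we have $J_1 \cong \mathcal{K}\otimes C_{\Theta_{[\widehat 1]}}^{0,n-1}$, so $K_0(J_1)\cong K_0(C_{\Theta_{[\widehat 1]}}^{0,n-1})\cong\mathbb{Z}$ generated by $[p\otimes 1] = [\mathbb{P}_1]$ and $K_1(J_1)=0$, which matches $2^{0}=1$ and $2^{0}-1 = 0$.

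For the inductive step I would use the chain map $\beta$ restricted to the ideal. The key structural observation is that the ideal $J_k$ of $C_{\Theta}^{0,n}$ fits into a short exact sequence whose quotient relates $J_k$ inside $C_\Theta^{0,n}$ to the ideal $J_{k-1}'$ (generated by the analogous defect projection) inside $C_\Theta^{1,n-1}$ via $\beta_0$; more precisely, following the proof of Proposition \ref{exact-sequence1}, one gets
\[
0 \longrightarrow I_{0,n} \longrightarrow J_k \xrightarrow{\ \beta_0\ } J'_{k-1} \longrightarrow 0,
\]
where $I_{0,n}\cong \mathcal{K}\otimes C_{\Theta_{[\widehat 1]}}^{0,n-1}$ and $J'_{k-1}$ is the ideal of $C_\Theta^{1,n-1}$ generated by the defect projection of $\prod_{i=2}^{k}s_i^{1,n-1}$. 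I would then need the $K$-groups of $J'_{k-1}$: this is an ideal inside $C_\Theta^{1,n-1}$, a crossed product $C_{\Theta_{[\widehat 1]}}^{0,n-1}\rtimes\mathbb{Z}$ of the type in Proposition \ref{cross-product}, so its $K$-theory can be obtained either by a parallel induction (treating $J'_{k-1}$ as an ideal whose quotient/subquotient structure is governed by the same $\tau$-crossed-product picture and the fact that $\tau\sim_h\mathrm{id}$, so that the Pimsner--Voiculescu boundary maps split off a free $\mathbb{Z}$-summand) or by citing the computation that for the one-unitary case $K_0 = K_1 \cong \mathbb{Z}^{2^{k-1}}$ with the stated generators, exactly as in Theorem \ref{K-groups general}. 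Feeding $K_*(I_{0,n})$ and $K_*(J'_{k-1})$ into the six-term exact sequence of the displayed extension, I would compute the connecting maps: the index map $\partial\colon K_1(J'_{k-1})\to K_0(I_{0,n})=\mathbb{Z}$ should be surjective (it kills exactly the class $[\mathbb{U}_i]$ that becomes $[1]$ under the relevant identification, mirroring the computation $\partial([s_1^{1,n-1}]) = [1-s_1^{0,n}(s_1^{0,n})^*]$ in the proof of Theorem \ref{$K$-groups}), and the exponential map $\delta$ should vanish. This yields $K_0(J_k)\cong K_0(J'_{k-1})\cong \mathbb{Z}^{2^{k-1}}$ generated by $\{[\mathbb{P}_i]\}$ and $K_1(J_k)\cong \ker\partial \cong \mathbb{Z}^{2^{k-1}-1}$ generated by those $[\mathbb{U}_i]$ with $[\mathbb{U}_i]\neq[1]$, closing the induction.

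The main obstacle I anticipate is bookkeeping the generators correctly through the two connecting homomorphisms — specifically, verifying that under $\partial$ the image of the "missing" unitary generator is precisely $[1]\in K_0(I_{0,n})$ (equivalently, identifying which of the $2^{k-1}$ classes $[\mathbb{U}_i]$ maps onto the generator of $\mathbb{Z}$ and confirming the rest inject), and that the $[\mathbb{P}_i]$ classes, which a priori live in $C_{\Theta_{[m]}}^{m,0}\subset C_\Theta^{0,n}$, actually lie in the ideal $J_k$ and remain linearly independent there. This is the same kind of argument as in Theorem \ref{$K$-groups} but iterated; the homotopy triviality $\tau\sim_h\mathrm{id}$ keeps all the Pimsner--Voiculescu maps as clean split injections/surjections, so no extension problems arise and the free abelian groups are read off directly. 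A secondary point to be careful about is the reindexing of $\Theta$ under the maps $\beta_l$ (Remark following Proposition \ref{cross-product}), but since $\Theta$ is an arbitrary real tuple throughout, this does not affect the ranks or the generator labels.
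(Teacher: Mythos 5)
Your general toolkit (six-term sequences fed with the $K$-groups already computed in Theorems \ref{$K$-groups} and \ref{K-groups general}) is the right one, and the extension $0 \to I_{0,n} \to J_k \xrightarrow{\beta_0} J'_{k-1} \to 0$ you write down is valid, but two of your intermediate claims are wrong, and they happen to cancel. First, $K_*(J'_{k-1})$ is not $\mathbb{Z}^{2^{k-1}}$ in both degrees: applying the six-term sequence to $0 \to J'_{k-1} \to C_{\Theta}^{1,n-1} \to C_{\Theta}^{k,n-k} \to 0$, the maps $K_0(C_{\Theta}^{1,n-1})=\mathbb{Z}\langle[1]\rangle \to K_0(C_{\Theta}^{k,n-k})$ and $K_1(C_{\Theta}^{1,n-1})=\mathbb{Z}\langle[s_1]\rangle \to K_1(C_{\Theta}^{k,n-k})$ are both injections onto basis summands, so $K_0(J'_{k-1})\cong K_1(J'_{k-1})\cong \mathbb{Z}^{2^{k-1}-1}$. (Check $n=k=2$: there $J'_1=I_{1,1}\cong \mathcal{K}\otimes C(\mathbb{T})$ by Proposition \ref{twisted}, so both groups are $\mathbb{Z}=\mathbb{Z}^{2^{1}-1}$, not $\mathbb{Z}^{2}$.) Second, with the correct value of $K_1(J'_{k-1})$, the index map $\partial\colon K_1(J'_{k-1})\to K_0(I_{0,n})=\mathbb{Z}$ in your displayed extension must be \emph{zero}, not surjective --- otherwise you would obtain $K_0(J_k)\cong\mathbb{Z}^{2^{k-1}-1}$ and $K_1(J_k)\cong\mathbb{Z}^{2^{k-1}-2}$, contradicting the statement. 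Your two errors compensate to produce the advertised ranks, but the argument as written is not a proof; the vanishing of $\partial$ is precisely the nontrivial point your route would have to establish, and you assert the opposite.

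The paper avoids all of this by applying the six-term sequence once to $0 \to J_k \to C_{\Theta}^{0,n}\xrightarrow{\beta} C_{\Theta}^{k,n-k}\to 0$ (Proposition \ref{exact-sequence1} with $m=0$, $l=k$): since $K_1(C_{\Theta}^{0,n})=0$ and $\beta_*[1]=[1]$ makes $\beta_*$ injective on $K_0$, exactness immediately gives that $\partial\colon K_1(C_{\Theta}^{k,n-k})\to K_0(J_k)$ is an isomorphism, so $K_0(J_k)\cong\mathbb{Z}^{2^{k-1}}$, and $K_1(J_k)\cong \operatorname{coker}\bigl(\beta_*\colon K_0(C_{\Theta}^{0,n})\to K_0(C_{\Theta}^{k,n-k})\bigr)\cong \mathbb{Z}^{2^{k-1}}/\mathbb{Z}\langle[1]\rangle\cong\mathbb{Z}^{2^{k-1}-1}$ --- no induction on $k$, no auxiliary ideal $J'_{k-1}$, and no undetermined connecting map. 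If you want to salvage your route, you must (a) correct $K_*(J'_{k-1})$ to $\mathbb{Z}^{2^{k-1}-1}$ and (b) actually prove $\partial=0$ on your extension, e.g.\ by comparing it with the paper's extension through the inclusion $J_k\hookrightarrow C_{\Theta}^{0,n}$ and naturality of the index map --- at which point you have essentially reproduced the shorter argument.
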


\begin{proof} There is the following short exact sequence.
	\[
	\quad 0 \longrightarrow J_{k} \xrightarrow{i} C_{\Theta}^{0,n}\xrightarrow{\beta}  C_{\Theta}^{k,n-k}\longrightarrow  0.
	\]
	where $\beta=\beta_{0}\circ\beta_{1}\circ\cdots\circ\beta_{k-1}$. The corresponding six term short exact sequence of $K$-groups is the following.
	\begin{center}
		\begin{tikzpicture}[node distance=1cm,auto]
			\tikzset{myptr/.style={decoration={markings,mark=at position 1 with %
						{\arrow[scale=2,>=stealth]{>}}},postaction={decorate}}}
			\node (A){$ K_0(J_{k})$};
			\node (Up)[node distance=2cm, right of=A][label=above:$i_{*} $]{};
			\node (B)[node distance=4cm, right of=A]{$K_0(C_{\Theta}^{0,n})$};
			\node (Up)[node distance=2cm, right of=B][label=above:$\beta_*$]{};
			\node (C)[node distance=5cm, right of=B]{$K_0(C_{\Theta}^{k,n-k})$};
			\node (D)[node distance=2cm, below of=C]{$K_1(J_{k})$};
			\node (E)[node distance=4cm, left of=D]{$K_1(C_{\Theta}^{0,n})$};
			\node (F)[node distance=5cm, left of=E]{$K_1(C_{\Theta}^{k,n-k})$};
			\node (Below)[node distance=2cm, left of=D][label=below:$i_{*}$]{};
			\node (Below)[node distance=2cm, left of=E][label=below:$\beta_*$]{};
			\draw[myptr](A) to (B);
			\draw[myptr](B) to (C);
			\draw[myptr](C) to node{{ $\delta$}}(D);
			\draw[myptr](D) to (E);
			\draw[myptr](E) to (F);
			\draw[myptr](F) to node{{ $\partial$}}(A);
		\end{tikzpicture}
	\end{center}
	As $\beta_{*}[1]=[1]$, it follows that the map $\beta_{*}$ is injective which implies that the range of $\beta_{*}$ and by exactness, the kernel of $\delta$ are $\mathbb{Z}$ by Theorem (\ref{$K$-groups}) as well as the map $K_{1}(i)$ maps to $0$. By Theorem (\ref{K-groups general}), the range of $\delta$ and kernel of $K_{1}(i)=\mathbb{Z}^{2^{k-1}-1}$. Using Theorem (\ref{$K$-groups}), $K_{1}(i)$ maps to $0$. Therefore $K_{1}(J_{k})=\mathbb{Z}^{2^{k-1}-1}$ implying that $\delta$ is surjective and that the map $K_{1}(\beta)=0$. Hence the kernel of $\partial$ is $0$ and by Theorem (\ref{K-groups general}), the range of $\partial$ is $\mathbb{Z}^{2^{k-1}}$. This gives that $\partial$ is an isomorphism and $K_{0}(J_{k})=\mathbb{Z}^{2^{k-1}}$. The generators of  $K_{0}(J_{k})$ are $\{[\mathbb{P}_i]: 1 \leq i \leq 2^{k-1}\}$ and those of $K_{1}(J_{k})$ are $\{[\mathbb{U}_i]: 1 \leq i \leq 2^{k-1}-1\}$, and for all $i$,  $[\mathbb{U}_i] \neq [1] \}$.
\end{proof}

\noindent In what follows we recall the notion of $K$-stability, and compute non-stable groups of $C_{\Theta}^{m,n}$ by proving its $K$-stability.

\begin{definition}
	Let $\mathcal{A}$ be a $C^{*}$-algebra. Define a multiplication on $\mathcal{A}$ by $a\star b=a+b-ab$. An element $u \in \mathcal{A}$ is called \emph{quasi-unitary} if $u\star u^{*} = u^{*}\star u = 0$. The set of all quasi-unitary elements of $\mathcal{A}$ is denoted by $\widehat{\mathcal{U}}(\mathcal{A})$. The suspension of $\mathcal{A}$ is defined to be $S\mathcal{A}=C_{0}(\mathbb{R}) \otimes \mathcal{A}$. For $n > 1$, set $S^{n}\mathcal{A}:= S(S^{n-1}\mathcal{A})$. Define $\pi_{n}(\widehat{\mathcal{U}}(\mathcal{A})) \simeq \pi_{0}(\widehat{\mathcal{U}}(S^{n}\mathcal{A}))$. The nonstable $K$-groups of a $C^{*}$ algebra $\mathcal{A}$ are defined as
	$$k_{n}(\mathcal{A}):= \pi_{n+1}(\widehat{\mathcal{U}}(\mathcal{A})),\quad \mbox{ for } n\in \mathbb{N}_{0}\cup\{-1\}.$$
	Let $m \geq 2$. Define $i_{m}:M_{m-1}(\mathcal{A}) \rightarrow M_{m}(\mathcal{A})$ by
	$$ a \rightarrow
	\begin{bmatrix}
		a & 0 \\
		0 & 0\\
	\end{bmatrix}.
	\quad
	$$
	A $C^*$-algebra
	$\mathcal{A}$ is said to be $K$-stable if $(i_{m})_{*}:k_{n}(M_{m-1}(\mathcal{A})) \rightarrow k_{n}(M_{m}(\mathcal{A}))$ is an isomorphism for all $n\in \mathbb{N}_{0}\cup\{-1\}$. Note that  if $\mathcal{A}$ is $K$-stable then $k_{n}(\mathcal{A})=K_{0}(\mathcal{A})$ if $n$ is even and $k_{n}(\mathcal{A})=K_{1}(\mathcal{A})$ if $n$ is odd.  Here  $K_{0}(\mathcal{A})$ and $K_{1}(\mathcal{A})$ are the stable $K$ groups of $\mathcal{A}$.
\end{definition}

\begin{remark} We assume the following facts throughout the paper.
	\begin{enumerate}[(i)]
		\item Every stable $C^{*}$ algebra is $K$-stable (see Proposition $2.6$ of \cite{Tho-1991aa} for its proof).
		\item For $\Theta \in \bigwedge_{n}$, the universal $C^{*}$-algebra $\mathcal{A}_{\Theta}^{n}$ is $K$-stable for every $n \in \mathbb{N}$ such that $n \geq 2$ (see \cite{Rie-1987aa} for its proof).
	\end{enumerate}
\end{remark}

\begin{proposition} {\rm(\cite{Tho-1991aa})}\label{longexactseqfivelemma}
	Assume that the following is a short exact sequence of $C^*$-algebras.
	\[
	\zeta: 0\longrightarrow \mathcal{J} \xrightarrow{\nu} \mathcal{A}\xrightarrow{\kappa}  \mathcal{B}\longrightarrow  0.
	\]
	Then the following statements are true.
	\begin{enumerate}[(i)]
		\item The $K$-stability of $\mathcal{J},\mathcal{B}$ implies the $K$-stability of $\mathcal{A}$.
		\item The $K$-stability of $\mathcal{A},\mathcal{B}$ implies the $K$-stability of $\mathcal{J}$.
	\end{enumerate}
\end{proposition}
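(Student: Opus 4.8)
The plan is to reduce both implications to the Five lemma applied to the long exact sequence of non-stable $K$-theory functors. The one substantial ingredient, which I would quote directly from \cite{Tho-1991aa}, is that $\mathcal{C}\mapsto k_n(\mathcal{C})$ is a sequence of homotopy-invariant, half-exact functors which, for the short exact sequence $\zeta$, fit into a natural long exact sequence
\[
\cdots \longrightarrow k_{n}(\mathcal{J}) \longrightarrow k_{n}(\mathcal{A}) \longrightarrow k_{n}(\mathcal{B}) \xrightarrow{\ \partial_n\ } k_{n-1}(\mathcal{J}) \longrightarrow \cdots \longrightarrow k_{-1}(\mathcal{J}) \longrightarrow k_{-1}(\mathcal{A}) \longrightarrow k_{-1}(\mathcal{B}),
\]
where naturality means that a morphism of short exact sequences of $C^*$-algebras induces a commuting ladder of such long exact sequences, compatibly with the connecting maps $\partial_n$.

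Next I would fix $m\geq 2$ and apply the exact functor $M_{m-1}(-)$, respectively $M_m(-)$, to $\zeta$, obtaining short exact sequences $0\to M_{m-1}(\mathcal{J})\to M_{m-1}(\mathcal{A})\to M_{m-1}(\mathcal{B})\to 0$ and $0\to M_{m}(\mathcal{J})\to M_{m}(\mathcal{A})\to M_{m}(\mathcal{B})\to 0$. The corner embeddings $i_m\colon M_{m-1}(\mathcal{C})\to M_m(\mathcal{C})$ for $\mathcal{C}\in\{\mathcal{J},\mathcal{A},\mathcal{B}\}$ constitute a morphism from the first short exact sequence to the second, so the naturality of Thomsen's long exact sequence produces, for every $n\geq -1$, a commutative ladder with exact rows whose vertical maps are exactly the maps $(i_m)_*$ occurring in the definition of $K$-stability.

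The rest is a diagram chase. For part (i), the hypothesis that $\mathcal{J}$ and $\mathcal{B}$ are $K$-stable makes every vertical map over a $k_\ast(M_\bullet\mathcal{J})$- or $k_\ast(M_\bullet\mathcal{B})$-term an isomorphism; applying the Five lemma to the five-term window $k_{n+1}(M_\bullet\mathcal{B})\to k_n(M_\bullet\mathcal{J})\to k_n(M_\bullet\mathcal{A})\to k_n(M_\bullet\mathcal{B})\to k_{n-1}(M_\bullet\mathcal{J})$ forces $(i_m)_*$ to be an isomorphism on $k_n(M_\bullet\mathcal{A})$ for all $n\geq -1$, and since $m\geq 2$ was arbitrary this says $\mathcal{A}$ is $K$-stable. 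For part (ii), the hypothesis that $\mathcal{A}$ and $\mathcal{B}$ are $K$-stable, together with the window $k_{n+1}(M_\bullet\mathcal{A})\to k_{n+1}(M_\bullet\mathcal{B})\to k_n(M_\bullet\mathcal{J})\to k_n(M_\bullet\mathcal{A})\to k_n(M_\bullet\mathcal{B})$, shows $(i_m)_*$ is an isomorphism on $k_n(M_\bullet\mathcal{J})$, so $\mathcal{J}$ is $K$-stable. The only bookkeeping point is the terminal degree $n=-1$, where the long exact sequence stops at $k_{-1}(M_\bullet\mathcal{B})$; there injectivity of the relevant $(i_m)_*$ follows from the four lemma and the remaining exactness handled as in \cite{Tho-1991aa}. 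I expect the genuine obstacle to be none of this chasing but precisely the construction of the natural long exact sequence of non-stable $K$-groups, i.e.\ Thomsen's homological reformulation of non-stable $K$-theory; once that is granted, the proposition follows formally.
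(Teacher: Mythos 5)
Your proposal is correct in substance but runs along a genuinely different ladder than the paper's. For part (ii) the paper does not amplify the sequence at all: it compares the non-stable long exact sequence of $\zeta$ with the \emph{stable} $K$-theory long exact sequence of the same $\zeta$, the vertical maps being the natural comparison maps $k_n(\mathcal{C})\to K_n(\mathcal{C})$, which are isomorphisms over the $\mathcal{A}$- and $\mathcal{B}$-terms by $K$-stability; the Five lemma then yields $k_n(\mathcal{J})\cong K_n(\mathcal{J})$. You instead apply $M_{m-1}(-)$ and $M_m(-)$ to $\zeta$ and compare the two resulting $k_*$-sequences via the corner embeddings, so that the vertical maps are literally the maps $(i_m)_*$ appearing in the definition of $K$-stability. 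Your version is the more self-contained one: it verifies the definition directly at every matrix level, whereas the paper's argument strictly speaking only identifies $k_n(\mathcal{J})$ with $K_n(\mathcal{J})$ and leaves implicit the (routine) repetition of the argument for each $M_m(\zeta)$ needed to conclude that the corner embeddings induce isomorphisms; on the other hand the paper's bottom row is two-sided by Bott periodicity, so it never has to worry about where the sequence terminates. For part (i) the paper simply cites Thomsen's Theorem 3.11, and here your honesty about the degree $n=-1$ is warranted: your five-term window centred at $k_{-1}(M_\bullet\mathcal{A})$ is missing its right-hand term, the four lemma gives only injectivity of $(i_m)_*$ there, and surjectivity cannot be extracted from the diagram chase because the last map $k_{-1}(M_\bullet\mathcal{A})\to k_{-1}(M_\bullet\mathcal{B})$ of Thomsen's sequence need not be surjective (consider the Toeplitz extension). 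That one degree genuinely requires Thomsen's additional input rather than formal homological algebra, so your deferral to \cite{Tho-1991aa} at that point is not cosmetic; since the proposition is attributed to Thomsen and the paper proves (i) by citation alone, this does not undermine your argument, but it should be flagged as the one step that is not purely a consequence of naturality plus the Five lemma. Note also that in part (ii) your window centred at $k_n(M_\bullet\mathcal{J})$ does exist in full for every $n\geq -1$, so your route closes the bottom degree of (ii) cleanly.
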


\begin{proof}
	The first result follows from \cite[Theorem 3.11]{Tho-1991aa}.
	
	For the second result, by using the short exact sequence $\zeta$, one gets  the following commuting diagram.
	\[
	\begin{tikzcd}
		\cdots k_{n+1}(\mathcal{A}) \arrow[d] \arrow[r,"k_{n+1}(\kappa)"] & k_{n+1}(\mathcal{B}) \arrow[d] \arrow[r, "\delta"] & k_{n}(\mathcal{J}) \arrow[d] \arrow[r, "k_{n}(\nu)"] & k_{n}(\mathcal{A}) \arrow[d] \arrow[r,"k_{n}(\kappa)"] & k_{n}(\mathcal{B}) \arrow[d] \cdots \\
		\cdots K_{n+1}(\mathcal{A}) \arrow[r,"K_{n+1}(\kappa)"] & K_{n+1}(\mathcal{B}) \arrow[r, "\delta"] & K_{n}(\mathcal{J}) \arrow[r, "K_{n}(\nu)"] & K_{n}(\mathcal{A}) \ar[r,"K_{n}(\kappa)"] & K_{n}(\mathcal{B}) \cdots\\
	\end{tikzcd}
	\]
	From the $K$-stability of $\mathcal{A}$ and $\mathcal{B}$, it follows that five vertical maps are isomorphisms. Now the claim follows from the Five lemma of Homology theory \cite[Lemma 3.3, Chapter I]{SM}.
\end{proof}

\begin{remark}
	Note that if $\mathcal{J},\mathcal{A}$ are $K$-stable, then $k_{n}(\mathcal{B})=K_{n}(\mathcal{B})$ for all $n\in\mathbb{N}_{0}$. But the same might not be concluded directly for $k_{-1}(\mathcal{B})$ and $K_{-1}(\mathcal{B})$.
\end{remark}
\begin{theorem}\label{Algebra $K$-stable}
	Let $m,n \in \mathbb{N}_{0}$ such that $m+n>1$. For $\Theta \in \bigwedge_{m+n}$, the $C^*$-algebra $C_{\Theta}^{m,n} $ is $K$-stable.
\end{theorem}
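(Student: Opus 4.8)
The plan is to induct on $n$, using the short exact sequence $\chi_{m,n}$ of Proposition \ref{exact-sequence} together with the two-out-of-three principle for $K$-stability recorded in Proposition \ref{longexactseqfivelemma}. When $n=0$ the algebra $C_{\Theta}^{m,0}$ is the noncommutative torus $\mathcal{A}_{\Theta}^{m}$ with $\Theta\in\bigwedge_m$ and $m=m+n>1$, so it is $K$-stable by Rieffel's theorem (recorded in Remark (ii) after the definition of $K$-stability). For the inductive step, suppose $n\geq 1$ and that the result holds for all parameter matrices in $\bigwedge_{m+1+(n-1)}=\bigwedge_{m+n}$ with one fewer isometry and one more unitary; in particular $C_{\Theta}^{m+1,n-1}$ is $K$-stable. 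By Proposition \ref{longexactseqfivelemma}(i), it then suffices to prove that the ideal $I_{m,n}$ appearing in $\chi_{m,n}$ is $K$-stable.

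The key observation is that $I_{m,n}$ is already known explicitly: by Proposition \ref{twisted}, $I_{m,n}\cong \mathcal{K}\otimes C_{\Theta_{[\widehat{m+1}]}}^{m,n-1}$. Since $\mathcal{K}\otimes (-)$ applied to any $C^*$-algebra produces a stable $C^*$-algebra, and every stable $C^*$-algebra is $K$-stable (Remark (i) after the definition of $K$-stability, citing Proposition $2.6$ of \cite{Tho-1991aa}), the ideal $I_{m,n}$ is $K$-stable regardless of the $K$-stability of $C_{\Theta_{[\widehat{m+1}]}}^{m,n-1}$. Feeding $K$-stability of $I_{m,n}$ and of the quotient $C_{\Theta}^{m+1,n-1}$ into Proposition \ref{longexactseqfivelemma}(i) gives $K$-stability of $C_{\Theta}^{m,n}$, completing the induction.

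One point to check carefully is that the induction is correctly set up on the pair $(m,n)$: the quotient $C_{\Theta}^{m+1,n-1}$ has $m+1$ unitaries and $n-1$ isometries, with total still $m+n>1$, and the relevant submatrix of $\Theta$ — after the reindexing alluded to in the Remark following Proposition \ref{cross-product} — still lies in $\bigwedge_{m+n}$ because $\Theta\in\bigwedge_{m+n}$; so the inductive hypothesis genuinely applies. The base case of the induction on $n$ is $n=0$ handled above; when $m=0$ one needs $n>1$, and the same exact sequence $\chi_{0,n}$ reduces $C_{\Theta}^{0,n}$ to the $K$-stable pieces $I_{0,n}\cong\mathcal{K}\otimes C_{\Theta_{[\widehat{1}]}}^{0,n-1}$ (stable, hence $K$-stable) and $C_{\Theta}^{1,n-1}$ ($K$-stable by the previous paragraph, since $1+(n-1)=n>1$). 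The only real subtlety — and the step I would watch most closely — is making sure the nondegeneracy hypothesis $\Theta\in\bigwedge_{m+n}$ is used solely where it is needed, namely to invoke Rieffel's $K$-stability of the noncommutative torus at the bottom of the induction; the passage from $I_{m,n}$ to its $K$-stability does not require any nondegeneracy, since stability is automatic, so no further hypotheses on $\Theta_{[\widehat{m+1}]}$ are needed.
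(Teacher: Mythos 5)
Your proof is correct and follows essentially the same route as the paper: the paper runs the identical induction by applying Proposition (\ref{longexactseqfivelemma}) repeatedly along the chain $\chi_{m+n,0},\chi_{m+n-1,1},\dots,\chi_{m+1,n-1}$, starting from Rieffel's $K$-stability of the noncommutative torus $C_{\Theta}^{m+n,0}$ and using that each ideal $I_{m+n-k,k}\cong\mathcal{K}\otimes C_{\Theta_{[\widehat{m+n-k+1}]}}^{m+n-k,k-1}$ is stable, hence $K$-stable. Your closing observation that nondegeneracy of $\Theta$ is needed only at the torus base case, and not for the ideals, is accurate and consistent with the paper's argument.
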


\begin{proof}  Note that  the $C^*$-algebra $C_{\Theta}^{m+n,0} $ is $K$-stable as it is the non-commutative $m+n$-torus. Now  using Proposition (\ref{exact-sequence}), one gets a  chain $\{\chi_{m+n-k,k}:1 \leq k \leq n\}$ of short exact sequence of $C^*$-algebras given as follows.
	\[
	\chi_{m+n-k,k}:  \quad 0\longrightarrow I_{m+n-k,k} \xrightarrow{i} C_{\Theta}^{m+n-k,k}\xrightarrow{\beta_{m+n-k}}  C_{\Theta}^{m+n-(k-1),k-1}\longrightarrow  0.
	\]
	By Proposition (\ref{twisted}), we have
	$$I_{m+n-k,k} \cong \mathcal{K}\otimes C_{\Theta_{[\widehat{m+n-k+1}]}}^{m+n-k,k-1}.$$
	This proves that $I_{m+n-k,k}$ is stable, hence $K$-stable. Now if we assume that  $C_{\Theta}^{m+n-(k-1),k-1}$ is $K$-stable then by Proposition (\ref{longexactseqfivelemma}),  it follows that $C_{\Theta}^{m+n-k,k}$ is $K$-stable.
	The claim now follows by repeatedly applying this argument to the chain $\chi_{m+n,0}, \chi_{m+n-1,1}\cdots \chi_{m+1,n-1}$ in the given order and the fact that $C_{\Theta}^{m+n,0} $  is $K$-stable.
\end{proof}

\begin{corollary} Let $m,n \in \mathbb{N}_{0}$ be such that $m+n>1$ and let $\Theta \in \bigwedge_{m+n}$.  Then one has for $j\in\mathbb{N}_{0}$, the non-stable $K$-groups of $C_{\Theta}^{m,n}$ as follows.
	\begin{IEEEeqnarray*}{rCl}
		k_{2j}\big(C_{\Theta}^{m,n}\big)	&\cong &
		\begin{cases}
			K_{0}(C_{\Theta}^{m,n}) \cong \mathbb{Z}^{2^{m-1}}& \mbox{ if } m\geq1, \cr
			K_{0}(C_{\Theta}^{0,n}) \cong \mathbb{Z} & \mbox{ if } m=0. \cr
		\end{cases} \\
		k_{2j+1}\big(C_{\Theta}^{m,n}\big) &\cong & k_{-1}\big(C_{\Theta}^{m,n}\big)\cong
		\begin{cases}
			K_{1}(C_{\Theta}^{m,n}) \cong \mathbb{Z}^{2^{m-1}}& \mbox{ if } m\geq1, \cr
			K_{1}(C_{\Theta}^{0,n}) \cong 0& \mbox{ if } m=0,\cr
		\end{cases}
	\end{IEEEeqnarray*}
	where $k_{i}\big(C_{\Theta}^{m,n}\big)$ is the $i$-th non-stable $K$-group of $C_{\Theta}^{m,n}$ for $i\in\mathbb{N}_{0}\cup\{-1\}$.
\end{corollary}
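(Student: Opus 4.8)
The plan is to deduce the statement as an immediate consequence of two facts already established in the excerpt: the $K$-stability of $C_{\Theta}^{m,n}$ (Theorem~\ref{Algebra $K$-stable}) and the computation of its stable $K$-groups (Theorem~\ref{K-groups general}). The bridge between them is the general principle recorded in the definition of $K$-stability: for a $K$-stable $C^*$-algebra $\mathcal{A}$ the canonical comparison maps identify $k_n(\mathcal{A})$ with $K_0(\mathcal{A})$ when $n$ is even and with $K_1(\mathcal{A})$ when $n$ is odd, for every $n\in\mathbb{N}_0\cup\{-1\}$.

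First I would invoke Theorem~\ref{Algebra $K$-stable}: since $m+n>1$ and $\Theta\in\bigwedge_{m+n}$, the algebra $C_{\Theta}^{m,n}$ is $K$-stable. By Thomsen's non-stable $K$-theory (\cite{Tho-1991aa}) this yields isomorphisms $k_{2j}\big(C_{\Theta}^{m,n}\big)\cong K_0\big(C_{\Theta}^{m,n}\big)$ for all $j\in\mathbb{N}_0$, and $k_{2j+1}\big(C_{\Theta}^{m,n}\big)\cong k_{-1}\big(C_{\Theta}^{m,n}\big)\cong K_1\big(C_{\Theta}^{m,n}\big)$ for all $j\in\mathbb{N}_0$, using that $-1$ and every $2j+1$ are odd and that the stable $K$-groups are $2$-periodic. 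Then I would substitute the explicit values from Theorem~\ref{K-groups general}: $K_0\big(C_{\Theta}^{m,n}\big)\cong\mathbb{Z}^{2^{m-1}}\cong K_1\big(C_{\Theta}^{m,n}\big)$ when $m\geq1$, and $K_0\big(C_{\Theta}^{0,n}\big)\cong\mathbb{Z}$, $K_1\big(C_{\Theta}^{0,n}\big)=0$ when $m=0$ (the latter also being Theorem~\ref{$K$-groups}). This produces exactly the asserted formulas for $k_{2j}$ and $k_{2j+1}\cong k_{-1}$.

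The only subtlety worth addressing explicitly is the clause about $k_{-1}$, which is precisely the point the Remark following Proposition~\ref{longexactseqfivelemma} cautions about: the Five-Lemma comparison of the non-stable and stable long exact sequences of a short exact sequence only extends down to index $0$, so for a general quotient one cannot read off $k_{-1}\cong K_{-1}$ directly. Here, however, $C_{\Theta}^{m,n}$ is itself $K$-stable, not merely a subquotient of $K$-stable algebras, so the identification $k_{-1}\big(C_{\Theta}^{m,n}\big)\cong K_1\big(C_{\Theta}^{m,n}\big)$ is built into the definition of $K$-stability and needs no further argument. Beyond flagging this, the proof is a routine substitution, so I do not anticipate any genuine obstacle.
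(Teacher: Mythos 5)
Your proposal is correct and follows essentially the same route as the paper: the paper's own proof simply invokes Theorem \ref{Algebra $K$-stable} for $K$-stability and then reads off the non-stable $K$-groups from the stable ones computed in Theorem \ref{K-groups general}. Your additional remark clarifying why $k_{-1}$ poses no problem here (since $C_{\Theta}^{m,n}$ is itself $K$-stable, not merely a quotient of $K$-stable algebras) is a worthwhile elaboration of a point the paper leaves implicit.
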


\begin{proof}  By Theorem \ref{Algebra $K$-stable}, we get $K$-stability of $C_{\Theta}^{m,n}$. As a consequence, the claim follows.\end{proof} \qed

Let $\mathcal{N}$ denote the bootstrap category of $C^*$-algebras (see page 228, \cite{Bla-1998aa} for details). Then any $C^*$-algebra in $\mathcal{N}$ satisfies the UCT. The following proposition says that $C_{\Theta}^{m,n}$ is in the category $\mathcal{N}$, hence satisfies the UCT.
\begin{theorem} \label{UCT}
	For $m,n \in \mathbb{N}_0$, the $C^*$-algebra $C_{\Theta}^{m,n}$ satisfies the UCT. Moreover, if $J_I$ is the closed ideal of $C_{\Theta}^{m,n}$  generated by the  defect projection of  $\prod_{i \in I}s_i^{m,n}$ then $J_I$ satisfies the UCT.
\end{theorem}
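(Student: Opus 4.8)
The plan is to reduce everything to membership in the bootstrap category $\mathcal{N}$: once we know $C_{\Theta}^{m,n}\in\mathcal{N}$, the UCT follows at once, since every separable $C^*$-algebra in $\mathcal{N}$ satisfies it. I would use the standard permanence properties of $\mathcal{N}$ recorded in \cite{Bla-1998aa}: it contains every separable type~I (in particular every separable commutative) $C^*$-algebra, and it is closed under countable inductive limits, stable isomorphism (so $A\in\mathcal{N}$ if and only if $\mathcal{K}\otimes A\in\mathcal{N}$), crossed products by $\mathbb{Z}$, and the two-out-of-three rule for extensions: in a short exact sequence $0\to\mathcal{J}\to\mathcal{A}\to\mathcal{B}\to0$, if two of the three terms lie in $\mathcal{N}$, so does the third.

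The membership $C_{\Theta}^{m,n}\in\mathcal{N}$ would be proved by induction on $m+n$. For $m+n=1$ one has $C^{1,0}=C(\mathbb{T})$ and $C^{0,1}=\mathcal{T}$, both type~I, hence in $\mathcal{N}$. For the inductive step, assume the claim for all parameter tuples of size less than $k$ and let $m+n=k\ge2$. If $m\ge1$, Proposition~\ref{cross-product} (applied with $m$ replaced by $m-1$, together with the reindexing of $\Theta$ indicated in the remark following it) presents $C_{\Theta}^{m,n}$ as a crossed product $C_{\Theta'}^{m-1,n}\rtimes_{\tau}\mathbb{Z}$; the base algebra has $(m-1)+n=k-1$ generators, so it lies in $\mathcal{N}$ by the inductive hypothesis, and hence so does the crossed product. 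If $m=0$, so $n=k\ge2$, then Propositions~\ref{exact-sequence} and \ref{twisted} together with Remark~\ref{rem} provide the short exact sequence
\[
0\longrightarrow \mathcal{K}\otimes C_{\Theta_{[\widehat{1}]}}^{0,n-1}\longrightarrow C_{\Theta}^{0,n}\xrightarrow{\beta_0} C_{\Theta}^{1,n-1}\longrightarrow 0 .
\]
Its ideal is stably isomorphic to $C_{\Theta_{[\widehat{1}]}}^{0,n-1}$, in $\mathcal{N}$ by the inductive hypothesis, and its quotient $C_{\Theta}^{1,n-1}$ falls under the case $m\ge1$ just treated (which used the inductive hypothesis only at level $k-1$, so there is no circularity), hence also in $\mathcal{N}$. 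By the two-out-of-three rule $C_{\Theta}^{0,n}\in\mathcal{N}$, which completes the induction; in particular $C_{\Theta}^{m,n}$ satisfies the UCT for all $m,n\in\mathbb{N}_0$.

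For the second assertion I would note that, after conjugating by the unitary generators it involves and permuting the remaining ones (Remark~\ref{remark1}(iv)), $J_I$ is an ideal of the form $J_l^{m,n}$ occurring in Proposition~\ref{exact-sequence1}, so that there is a short exact sequence $0\to J_I\to C_{\Theta}^{m,n}\to C_{\Theta'}^{m+l,n-l}\to0$ whose two outer terms lie in $\mathcal{N}$ by the first part; the two-out-of-three rule then yields $J_I\in\mathcal{N}$, hence $J_I$ satisfies the UCT. I do not anticipate a real obstacle here --- the argument is a routine application of the permanence properties of the bootstrap class --- and the only points requiring care are the bookkeeping of the $\Theta$-reindexing in the crossed-product and extension identifications, and arranging the induction so that the $m=0$ step, which is handled through a quotient with $m=1$, stays non-circular. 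Note also that, in contrast to the $K$-stability statements, this result needs no nondegeneracy hypothesis on $\Theta$.
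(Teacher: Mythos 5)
Your argument is correct, and it reaches the same destination as the paper --- membership of $C_{\Theta}^{m,n}$ and $J_I$ in the bootstrap class $\mathcal{N}$, followed by the Rosenberg--Schochet theorem --- but the inductive engine is organized differently. The paper runs a single induction on the number of isometry generators, using only the extensions $0\to I_{m,n}\to C_{\Theta}^{m,n}\to C_{\Theta}^{m+1,n-1}\to 0$ of Proposition~\ref{exact-sequence} together with the stable isomorphism $I_{m,n}\cong\mathcal{K}\otimes C_{\Theta_{[\widehat{m+1}]}}^{m,n-1}$ of Proposition~\ref{twisted}, and it anchors the induction at the noncommutative torus, whose membership in $\mathcal{N}$ is taken as known. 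You instead induct on the total number of generators $m+n$, invoke the crossed-product presentation of Proposition~\ref{cross-product} together with closure of $\mathcal{N}$ under crossed products by $\mathbb{Z}$ to handle the case $m\geq 1$, and reserve the extension argument for the case $m=0$; your base case is $C(\mathbb{T})$ and $\mathcal{T}$ rather than the torus. What this buys you is self-containment --- the torus case is absorbed into the induction rather than cited --- at the price of using one more permanence property of $\mathcal{N}$ (closure under $\mathbb{Z}$-crossed products) that the paper's proof of this theorem does not need explicitly. Your handling of the potential circularity at the $m=0$ step (the quotient $C_{\Theta}^{1,n-1}$ sits at level $k$, but its treatment only consumes the hypothesis at level $k-1$) is exactly the point that needs to be said, and you said it. The treatment of $J_I$ via Remark~\ref{remark1}(iv) and the two-out-of-three rule coincides with the paper's, and your closing observation that no nondegeneracy hypothesis on $\Theta$ is needed is accurate and worth keeping.
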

\begin{proof} Invoking  Theorem  $23.1.1$ in \cite{Bla-1998aa} (see page 233 in \cite{Bla-1998aa} or \cite{RosSch-1987aa}), it suffices to show that all these $C^*$-algebras are in the bootstrap category $\mathcal{N}$. First note that they are separable and nuclear  (see \cite{NarJaySur-2022aa}), Theorem 6.2). Consider the following short exact sequence $\chi_{m,n}$ defined in Proposition (\ref{exact-sequence}).
	\[
	\chi_{m,n}: \quad 0\longrightarrow I_{m,n} \xrightarrow{i} C_{\Theta}^{m,n}\xrightarrow{\beta_m}  C_{\Theta}^{m+1,n-1}\longrightarrow  0.
	\]
	Moreover, from Proposition (\ref{twisted}), we have
	\[I_{m,n}
	\cong \mathcal{K}\otimes C_{\Theta_{[\widehat{m+1}]}}^{m,n-1}.
	\]
	Assume that $C_{\Theta}^{m,n-1}$ and $C_{\Theta}^{m+1,n-1}$ are in $\mathcal{N}$. Then  $I_{m,n}$ is in  $\mathcal{N}$ as it is $KK$-equivalent to  $C_{\Theta}^{m,n-1}$. Since two of the $C^*$-algebras are in $\mathcal{N}$ then so is the third.
	Following this argument, the claim that  $\mathcal{N}$ contains $C_{\Theta}^{m,n}$ follows by considering the  chain $\chi_{m+n-1,0}, \chi_{m+n-1,0}, \cdots \chi_{m,n}$ in the given order and from the fact that $\mathcal{N}$ contains the noncommutative torus.
	
	By part $(iv)$ of Remark (\ref{remark1}), we can assume, without loss of generality, that $I=\{1,2,\cdots , m,m+1,m+2, \cdots m+k\}$ for some $1 \leq k \leq n$.   Using  the following short exact sequence
	\[
	0\longrightarrow J_I \xrightarrow{i} C_{\Theta}^{m,n}\xrightarrow{\beta_{m+k-1}\circ  \beta_{m+k-2}\circ  \cdots \circ \beta_{m}} C_{\Theta}^{m+k,n-k}\longrightarrow  0,
	\]
	we conclude that $J_I$ is in the category $\mathcal{N}$.
\end{proof}

\newsection{Free twist of  isometries}
In this section, we discuss the general form of a representation of  $B_{\Theta}^{m,n}$ and prove its $K$-stability.
A  more general question  that arises naturally is; does there exists a non $K$-stable $C^*$-algebra generated by a tuple of doubly noncommuting isometries? Equivalently, is there any non $K$-stable ideal of $B_{\Theta}^{m,n}$? Here we show that if $\Theta \in \bigwedge_{m+n}$ then  any homomorphic image of $B_{\Theta}^{m,n}$ under a representation is $K$-stable. This  will prove that every ideal  of $B_{\Theta}^{m,n}$ is $K$-stable. We start with a definition.

\begin{definition} \label{twisted-isometries}
	Fix $m,n \in \mathbb{N}_0$ with $m+n>1$.  Let $\Theta =\{\theta_{ij}\in \mathbb{R}: 1\leq i<j \leq m+n\}$. We define $B_{\Theta}^{m,n}$ to be the universal $C^*$-algebra generated by
	$s_1, s_2, \cdots s_{m+n}$ satisfying the relations;
	\begin{IEEEeqnarray}{rCll}
		s_is_j&=&e^{2\pi \textrm{i} \theta_{ij} }s_js_i,\,\,& \mbox{ if } 1\leq i<j \leq m+n; \label{R1}\\
		s_i^*s_i&=& 1,\,\, & \mbox{ if } 1\leq i\leq m+n; \label{R2}\\
		s_is_i^*&=&1, \,\, & \mbox{ if }  1\leq i \leq m. \label{R3}
	\end{IEEEeqnarray}
	We call $s_i$'s the standard generators of $B_{\Theta}^{m,n}$.  If we need to specify $m,n$, we write $s_i^{m,n}$ for these generators.
\end{definition}

\begin{remark}
	Note that
	\begin{enumerate}[(i)]
		
		\item the $C^*$-algebra $B_{\Theta}^{m,0}$  is the noncommtative $m$-torus. We write $\mathcal{A}_{\Theta}^{m}$ for $B_{\Theta}^{m,0}$.
		
		\item Following \cite{Mor-2013aa}, we call the $C^*$-algebra $B_{\Theta}^{0,n}$ the free twist of $n$ isometries.
		We denote
		$B_{\Theta}^{0,n}$ by $B_{\Theta}^{n}$.
		
		\item There is a chain of  canonical maps $\mu_l$, $1\leq l  \leq m+n$,
		\[
		B_{\Theta}^{0,m+n} \xrightarrow{ \mu_0 } B_{\Theta}^{1,m+n-1} \xrightarrow{\mu_1} B_{\Theta}^{2,m+n-2} \cdots \cdots
		\xrightarrow{\mu_{m+n-1}} B_{\Theta}^{m+n,0}.
		\]
		mapping the canonical generators of $B_{\Theta}^{m+n-l+1,l-1}$ to the canonical generators of $B_{\Theta}^{m+n-l,l}$.
	\end{enumerate}
\end{remark}

First, we will  prove that there exists exactly one maximal ideal of $B_{\Theta}^{m,n}$ similar to the case of 	$C_{\Theta}^{m,n}$. The proof is exactly along the lines of \cite{Mor-2013aa}.
\begin{lemma}\label{unity decomposition}
	Fix $\epsilon > 0$. Let $D$ be the linear span of elements $x( 1-s_{m+n}^{m,n}(s_{m+n}^{m,n})^{*}) x^{\prime}$, where $x$ and $x^{\prime}$ are $\ast$-monomials  in $s_{m+1}^{m,n},\cdots,s_{m+n}^{m,n}$. Let $J$ be an ideal in $B_{\Theta}^{m,n}$ and let $1=w+y+z$ be a decomposition of $1$ satisfying the following.
	\begin{enumerate}[(i)]
		\item $w \in D$.
		\item $y$ is an element of $J$.
		\item $z \in B_{\Theta}^{m,n}$ such that $||z|| < \epsilon$.
	\end{enumerate}
	Then there exists a $y^{\prime} \in J$ such that $||y^{\prime}-1||<\epsilon$.
\end{lemma}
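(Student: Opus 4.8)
\emph{Plan.} The statement is equivalent to $\mathrm{dist}(1,J)\le\mathrm{dist}(1,D+J)$: adjoining the ``defect span'' $D$ to the ideal $J$ does not help one approximate the identity, so the real content is that $D$ cannot be exploited to approximate $1$ any better than $J$ alone does. I would attack this by a truncation/compression argument as in \cite{Mor-2013aa}. Write $q=1-s_{m+n}^{m,n}(s_{m+n}^{m,n})^{\ast}$, so that $w=\sum_{k}\lambda_k\,x_k\,q\,x_k'$ is a finite sum with each $x_k,x_k'$ a $\ast$-monomial in $s_{m+1}^{m,n},\dots,s_{m+n}^{m,n}$, and record the absorption identities $q\,s_{m+n}^{m,n}=0$ and $(s_{m+n}^{m,n})^{\ast}q=0$. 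The goal is to exhibit an isometry $V$, built from $s_{m+1}^{m,n},\dots,s_{m+n}^{m,n}$, with $V^{\ast}V=1$ and $qV=0$ (equivalently $V^{\ast}q=0$), for which the unital contractive map $\Phi_N(a)=V^{\ast N}aV^{N}$ drives $w$ to $0$. Since $\Phi_N(1)=1$ and $\Phi_N(J)\subseteq J$, applying $\Phi_N$ to $1=w+y+z$ gives $1=\Phi_N(w)+\Phi_N(y)+\Phi_N(z)$ with $\Phi_N(y)\in J$ and $\|\Phi_N(z)\|\le\|z\|<\epsilon$.

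Granting that $\|\Phi_N(w)\|\to 0$ as $N\to\infty$, the lemma follows at once: choose $N$ with $\|\Phi_N(w)\|<\epsilon-\|z\|$ and set $y'=\Phi_N(y)\in J$; then $\|y'-1\|=\|\Phi_N(y-1)\|=\|\Phi_N(w+z)\|\le\|\Phi_N(w)\|+\|z\|<\epsilon$. So everything reduces to proving $\|\Phi_N(w)\|\to 0$. For this one treats a single term: using $q=q^2$ one factors $\Phi_N(xqx')=[V^{\ast N}xq]\,[q\,x'V^{N}]$, the right factor being bounded by $\|x'\|$, and then commutes the powers of $V^{\ast}$ leftward through $x$ toward $q$ with the help of the relations \eqref{R1}, $(s_{m+n}^{m,n})^{\ast}s_{m+n}^{m,n}=1$ and $V^{\ast}q=0$, the hope being that $V^{\ast N}xq$ vanishes (or tends to $0$) once $N$ exceeds the ``$V$-length'' of $x$.

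I expect this last step to be the main obstacle. In $B_{\Theta}^{m,n}$ one has only $s_i^{m,n}s_j^{m,n}=e^{2\pi\mathrm{i}\theta_{ij}}s_j^{m,n}s_i^{m,n}$ and \emph{not} the stronger relation $(s_i^{m,n})^{\ast}s_j^{m,n}=e^{-2\pi\mathrm{i}\theta_{ij}}s_j^{m,n}(s_i^{m,n})^{\ast}$ available in $C_{\Theta}^{m,n}$ (indeed $B_{\Theta}^{m,n}$ is not even exact), so $V^{\ast}$ does not slide freely past the remaining isometries and a naive compression by a single generator is not sufficient. One must choose $V$ and organize the computation so that each correction term produced while moving $V^{\ast}$ across an $x_k$ still carries the factor $q$ and is therefore absorbed, leaving a quantity of bounded ``$V$-width'' which is annihilated for large $N$. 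This bookkeeping is carried out exactly as in \cite{Mor-2013aa}, and with it in hand the reduction of the previous paragraph completes the proof.
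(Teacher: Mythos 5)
Your proposal is essentially the paper's own argument: the paper compresses by the isometry $s=(s_{m+1}^{m,n})^{r}\cdots(s_{m+n}^{m,n})^{r}$ (which, by the commutation relations, coincides up to a unimodular scalar with your $V^{N}$ for $V=s_{m+1}^{m,n}\cdots s_{m+n}^{m,n}$), asserts $s^{*}w=0$ for $r$ large enough, and sets $y'=s^{*}ys$, giving $\|y'-1\|=\|s^{*}zs\|\le\|z\|<\epsilon$. The annihilation step you flag as the main obstacle is exactly the point the paper also leaves to \cite{Mor-2013aa}, so your reduction matches theirs.
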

\begin{proof}
	Consider an $r \in \mathbb{N}$ such that for the isometry
	$s=({s_{m+1}^{m,n}})^{r}\cdots({s_{m+n}^{m,n}}){r}$, it implies that $s^{*}w=0$. Take $y^{\prime}=s^{*}ys$. Then $1=s^{*}(w+y+z)s=y^{\prime}+s^{*}zs$. Thus it follows that $||y^{\prime}-1||=||s^{*}zs||\leq||z||<\epsilon$.
\end{proof}

A similar result is true if $1-s_{m+n}^{m,n}(s_{m+n}^{m,n})^{*}$ is replaced by $1-s_{m+i}^{m,n}(s_{m+i}^{m,n})^{*}$ for $1 \leq i \leq n$. If the same symbol $D$ is used to denote the linear span, it follows that  $\overline{D}$ is the ideal $\langle1-s_{m+i}(s_{m+i})^{*}\rangle$.

The following lemma gives a short exact sequence which will be used later in the present section.
\begin{lemma} \label{short exact sequence free}
	For $1 \leq i \leq n$, let $\mu_{m}^i: B_{\Theta}^{m,n} \rightarrow
	B_{\Theta}^{m+i,n-i}$ be the homomorphism $\mu_{m+i-1}\circ \mu_{m+i-2}\circ \cdots \circ \mu_{m}$ and  let  $J_{i}$ be the closed ideal generated by $1-(\overrightarrow{\prod_{j=1}^{m+i}}s_{j})(\overrightarrow{\prod_{j=1}^{m+i}}s_j)^*$.  Then one has the following short exact sequence of $C^*$-algebras:
	\[
	0 \longrightarrow J_{i} \longrightarrow
	B_{\Theta}^{m,n} \stackrel{\mu_{m}^{i}}{\longrightarrow} B_{\Theta}^{m+i,n-i} \longrightarrow 0.
	\]
\end{lemma}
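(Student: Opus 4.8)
The plan is to prove Lemma \ref{short exact sequence free} along exactly the same lines as Proposition \ref{exact-sequence1}, using the universal property of $B_{\Theta}^{m+i,n-i}$. The map $\mu_m^i$ is a composite of the canonical maps $\mu_l$, each of which sends standard generators to standard generators and is therefore surjective; hence $\mu_m^i$ is surjective, and the only thing to identify is its kernel. The claim is $\ker(\mu_m^i) = J_i$, where $J_i = \langle 1-(\overrightarrow{\prod_{j=1}^{m+i}}s_j)(\overrightarrow{\prod_{j=1}^{m+i}}s_j)^*\rangle$.

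First I would check the easy inclusion $J_i \subseteq \ker(\mu_m^i)$. Writing $t_j = \mu_m^i(s_j)$, the defining relations of $B_{\Theta}^{m+i,n-i}$ force $t_1,\dots,t_{m+i}$ to be unitaries, so in particular $(\overrightarrow{\prod_{j=1}^{m+i}}t_j)$ is a unitary and its defect projection $1-(\overrightarrow{\prod_{j=1}^{m+i}}t_j)(\overrightarrow{\prod_{j=1}^{m+i}}t_j)^*$ vanishes; since $\mu_m^i$ is a $*$-homomorphism, the generator of $J_i$ maps to $0$, so $J_i \subseteq \ker(\mu_m^i)$. For the reverse inclusion, I would argue as in Proposition \ref{exact-sequence1}: take any representation (or quotient map) $\zeta$ of $B_{\Theta}^{m,n}$ that annihilates $J_i$. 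Since $\zeta$ kills $1-(\overrightarrow{\prod_{j=1}^{m+i}}s_j)(\overrightarrow{\prod_{j=1}^{m+i}}s_j)^*$, the product $\overrightarrow{\prod_{j=1}^{m+i}}\zeta(s_j)$ is a coisometry, hence (being already an isometry, each $s_j$ being an isometry) a unitary; using the twisted commutation relation \eqref{R1} one then upgrades each individual $\zeta(s_j)$, $1\le j\le m+i$, to a unitary — concretely, from $\overrightarrow{\prod}\zeta(s_j)$ unitary and each factor an isometry one peels off factors one at a time, each time using that a product of isometries is a unitary only if each factor is (up to the scalar twist, which is invertible). Consequently $\{\zeta(s_j): 1\le j\le m+n\}$ satisfy the defining relations \eqref{R1}--\eqref{R3} of $B_{\Theta}^{m+i,n-i}$, so by the universal property of $B_{\Theta}^{m+i,n-i}$ there is a homomorphism $\varsigma$ with $\varsigma \circ \mu_m^i = \zeta$. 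Applying this to the quotient map $\zeta: B_{\Theta}^{m,n} \to B_{\Theta}^{m,n}/J_i$ shows $\ker(\mu_m^i) \subseteq J_i$, completing the proof.

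The step I expect to be the main obstacle is the peeling-off argument: showing that if $s_1\cdots s_{m+i}$ becomes unitary in the quotient and each $s_j$ is an isometry, then each $s_j$ for $j\le m+i$ becomes unitary. The cleanest way is to observe that an isometry $v$ is a unitary iff $vv^* = 1$, and that for isometries $v,w$ the product $vw$ has $vw(vw)^* = v(ww^*)v^* \le vv^* \le 1$, with equality throughout forcing $ww^* = 1$ on the range of $v^*$ and hence, since $v^*$ has full range when... — so more carefully one uses $vw(vw)^*=1 \Rightarrow v w w^* v^* = 1 \Rightarrow w w^* \ge v^* v^* {}^* = 1$ after multiplying by $v^*$ on the left and $v$ on the right, giving $ww^*=1$; then $v v^* = v(ww^*)v^* = 1$ too. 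Iterating (and absorbing the scalars $e^{2\pi i \theta_{ij}}$, which are unimodular and thus harmless) handles all $m+i$ factors. I would relegate this to a one-line computation since it is standard, and note that this is precisely the mechanism already used implicitly in Proposition \ref{exact-sequence1}.
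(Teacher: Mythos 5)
Your proposal is correct and follows essentially the same route as the paper: both inclusions are handled identically, with the reverse inclusion obtained by checking that the quotient by $J_i$ satisfies the defining relations of $B_{\Theta}^{m+i,n-i}$ and invoking its universal property. The only cosmetic difference is in extracting unitarity of the individual generators: the paper conjugates the defect projection of $\prod_{j=m+1}^{m+i}s_j$ by the product omitting $s_l$ to exhibit $1-s_ls_l^*$ as an element of $J_i$ directly, whereas you peel off isometries one at a time via $vww^*v^*=1\Rightarrow ww^*=1$; both computations are valid and equivalent in substance.
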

\begin{proof}	It is enough to show that $\ker (\mu_m^i)=J_{ i}$. Since  $\mu_m^i(s_j)$ is unitary for each $1\leq j \leq m+i$, we get  $J_{i} \subset \ker (\mu_m^i)$. Now, consider
	a representation $\pi$ of  $B_{\Theta}^{m,n}$ on the Hilbert space $\mathcal{H}$ which vanishes on $J_{ i}$. Using the equation (\ref{R1}), we have
	\[
	1-\overrightarrow{\prod_{j=1}^{m+i}}s_j(\overrightarrow{\prod_{j=1}^{m+i}}s_j)^*=1-\overrightarrow{\prod_{j=m+1}^{m+i}}s_j(\overrightarrow{\prod_{j=m+1}^{m+i}}s_j)^*.
	\]
	Note that
	\[
	(\overrightarrow{\prod_{j=m+1, j \neq l}^{m+i}}s_j)^*\Big(1-\overrightarrow{\prod_{j=m+1}^{m+i}}s_j(\overrightarrow{\prod_{j=m+1}^{m+i}}s_j)^*\Big)(\overrightarrow{\prod_{j=m+1, j \neq l}^{m+i}}s_j) = 1-s_ls_l^* \in  J_{ i}.
	\]
	This implies that $\pi(1-s_ls_l^*)=0$ for $ 1\leq l \leq m+i$ as $\pi$ vanishes on $J_{ i}$. It is easy to see that  $\{\pi(s_l):1\leq l \leq m+n\}$ satisfy the defining relations (\ref{twisted-isometries}) of $B_{\Theta}^{m+i,n-i}$.
	By the universal property of  $B_{\Theta}^{m+i,n-i}$,
	we get a representation $\varsigma$ on $\mathcal{H}$  such that $\varsigma \circ \mu_m^i=\pi$. This proves that $\ker (\mu_m^i) \subset J_{ i}$, and hence the claim.
\end{proof}

\begin{proposition}
	Let $\Theta \in \bigwedge_{m+n}$. Then the ideal $J=\langle1-s_{m+1}(s_{m+1})^{*},1-s_{m+2}(s_{m+2})^{*},\cdots,1-s_{m+n}(s_{m+n})^{*}\rangle$ is the unique maximal closed ideal of $B_{\Theta}^{m,n}$.
\end{proposition}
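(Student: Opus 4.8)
The plan is to show that $J$ is a proper closed ideal (so that it is contained in some maximal ideal) and that it is the \emph{only} maximal closed ideal, by proving that every proper closed ideal of $B_{\Theta}^{m,n}$ is contained in $J$. That $J$ is proper follows from the short exact sequence of Lemma \ref{short exact sequence free} with $i=n$: one has $B_{\Theta}^{m,n}/J_{n}\cong B_{\Theta}^{m+n,0}=\mathcal{A}_{\Theta}^{m+n}$, and $J$ is precisely the ideal $\langle 1-s_{m+1}s_{m+1}^{*},\dots,1-s_{m+n}s_{m+n}^{*}\rangle$, which coincides with $J_{n}=\ker(\mu_{m}^{n})$ (by the computation in the proof of Lemma \ref{short exact sequence free}, each $1-s_{m+l}s_{m+l}^{*}$ lies in $J_{n}$, and conversely $1-\overrightarrow{\prod}s_j(\overrightarrow{\prod}s_j)^{*}$ is visibly in $J$). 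Hence $B_{\Theta}^{m,n}/J\cong\mathcal{A}_{\Theta}^{m+n}$, which for $\Theta\in\bigwedge_{m+n}$ is a simple unital $C^*$-algebra and in particular nonzero, so $J\neq B_{\Theta}^{m,n}$.

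Next I would prove that $J$ is maximal by using that the quotient $\mathcal{A}_{\Theta}^{m+n}$ is simple: any ideal strictly containing $J$ maps onto a nonzero ideal of the simple algebra $\mathcal{A}_{\Theta}^{m+n}$, hence onto all of it, which forces the ideal to be everything. So $J$ is a maximal closed ideal. The crux is uniqueness: I must show any proper closed ideal $I$ satisfies $I\subseteq J$. Suppose not; then the image of $I$ under $\mu_m^n$ is a nonzero ideal of $\mathcal{A}_{\Theta}^{m+n}$, hence all of it, so $I+J=B_{\Theta}^{m,n}$. Thus there is a decomposition $1=y+z'$ with $y\in I$ and $z'\in J$. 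Now invoke the structure of $J$: since $J=\overline{D}$ where (as recorded after Lemma \ref{unity decomposition}) $D$ is the linear span of elements of the form $x(1-s_{m+i}^{m,n}(s_{m+i}^{m,n})^{*})x'$ over the various $i$ and $*$-monomials $x,x'$ in the $s_{m+1},\dots,s_{m+n}$, given $\epsilon>0$ I can approximate $z'$ within $\epsilon$ by an element $w\in D$, so $1=w+y+z$ with $w\in D$, $y\in I$, $\|z\|<\epsilon$. The remaining point is a version of Lemma \ref{unity decomposition} adapted to a general $w\in D$ (not just the single-defect-projection span $D$ there): for $w$ a finite sum of terms each annihilated on the left by a suitable high power of one of the $s_{m+i}$, one finds a single isometry $s$, a product of high powers of $s_{m+1},\dots,s_{m+n}$ (using the twisted commutation \eqref{R1} to reorder freely up to scalars), with $s^{*}w=0$; then $y'=s^{*}ys\in I$ satisfies $\|y'-1\|=\|s^{*}zs\|\le\|z\|<\epsilon$. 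Since $\epsilon$ was arbitrary and the invertible elements form an open set, $I$ contains an invertible element, so $I=B_{\Theta}^{m,n}$, contradicting properness. Hence $I\subseteq J$, and $J$ is the unique maximal closed ideal.

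The main obstacle I anticipate is the last step: extending Lemma \ref{unity decomposition} from the span $D$ of a single defect projection to the span of \emph{all} the defect projections $1-s_{m+i}s_{m+i}^{*}$, $1\le i\le n$, simultaneously. One must check that a finite linear combination $w=\sum_k x_k(1-s_{m+i_k}s_{m+i_k}^{*})x_k'$ can be annihilated on the left by one common isometry $s$ built from high powers of the shift-like generators; this uses that each $1-s_{m+i}s_{m+i}^{*}$ is killed on the left by $(s_{m+i}^{m,n})^{*}$ raised to a large enough power, that the $x_k,x_k'$ are $*$-monomials of bounded length, and crucially that relation \eqref{R1} lets one commute the $s_{m+j}$'s past one another at the cost of scalars of modulus one, so a single product $s=(s_{m+1})^{r}\cdots(s_{m+n})^{r}$ with $r$ large works for all terms at once. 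Everything else is bookkeeping with the short exact sequences and simplicity of the noncommutative torus.
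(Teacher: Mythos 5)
Your argument is correct and rests on the same two pillars as the paper's proof: simplicity of the quotient $\mathcal{A}_{\Theta}^{m+n}$ (which gives both that $J$ is proper and maximal, and that $I+J=B_{\Theta}^{m,n}$ whenever $I\not\subseteq J$), and the annihilation trick of Lemma (\ref{unity decomposition}). The difference is organizational. The paper never extends Lemma (\ref{unity decomposition}) beyond the linear span attached to a \emph{single} defect projection; instead it introduces the descending chain of auxiliary ideals $I'=I+\langle 1-s_{m+1}s_{m+1}^{*}\rangle+\cdots+\langle 1-s_{m+n-1}s_{m+n-1}^{*}\rangle$, then $I''$, and so on, and applies the one-projection lemma $n$ times to conclude successively that $1\in I'$, $1\in I''$, \ldots, and finally $1\in I$. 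You instead apply a strengthened lemma once, to the span of all $n$ defect projections simultaneously, which requires exactly the extension you flag: one isometry annihilating every term of a mixed sum $\sum_k x_k(1-s_{m+i_k}s_{m+i_k}^{*})x_k'$. That extension does hold, and essentially for the reason you give; the one observation worth recording to make it painless is monotonicity in $r$: writing $s_r=(s_{m+1})^{r}\cdots(s_{m+n})^{r}$, relation (\ref{R1}) gives $s_{r'}=\lambda\, s_r s_{r'-r}$ for a unimodular scalar $\lambda$ whenever $r'>r$, so $s_r^{*}u=0$ implies $s_{r'}^{*}u=0$; hence a common exponent for finitely many terms (with possibly different $i_k$) is just the maximum of the exponents supplied term by term by Lemma (\ref{unity decomposition}) and the remark following it. With that in place your one-shot argument is a mild streamlining of the paper's iteration, at the cost of proving a slightly stronger lemma; the paper's version buys the convenience of invoking Lemma (\ref{unity decomposition}) exactly as stated, at the cost of the bookkeeping with $I', I'',\dots$.
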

\begin{proof} Consider the following short exact sequence
	\[
	0\longrightarrow J \xrightarrow{i} B_{\Theta}^{m,n}\xrightarrow{\mu} \mathcal{A}_{\Theta}^{m+n}\longrightarrow  0,
	\]
	where $\mu=\mu_{m+n-1}\circ  \mu_{m+n-2}\circ\cdots\circ \mu_{m}$. Since $\mathcal{A}_{\Theta}^{m+n}$ is simple, $J$ is a maximal ideal. Let $I$ be any proper, non-zero closed ideal of $B_{\Theta}^{m,n}$. It follows that either $I \subseteq J$ or $I+J=B_{\Theta}^{m,n}$. Assume that $I+J=B_{\Theta}^{m,n}$. Then $$1\in I+J=I+\langle1-s_{m+1}(s_{m+1})^{*}\rangle+\langle1-s_{m+2}(s_{m+2})^{*}\rangle+\cdots+\langle1-s_{m+n}(s_{m+n})^{*}\rangle$$
	Let $I'=I+\langle1-s_{m+1}(s_{m+1})^{*}\rangle+\cdots+\langle1-s_{m+n-1}(s_{m+n-1})^{*}\rangle$. Therefore $$1 \in I'+\langle1-s_{m+n}(s_{m+n})^{*}\rangle=I'+\overline{D}.$$
	Hence there exist elements $x \in \langle1-s_{m+n}(s_{m+n})^{*}\rangle$ and $y_{1} \in I'$ such that $1=x+y_{1}$. Let $\epsilon_{m}=\frac{1}{m}$ for $m \in \mathbb{N}$. There exists an element $w_{\epsilon_{m}}\in D$ such that $||x-w_{\epsilon_{m}}||<\epsilon_{m}$. Take $z_{\epsilon_{m}}=x-w_{\epsilon_{m}}$. Then there are elements $w_{m},z_{m}\in B_ {\Theta}^{m,n}$ such that $1=w_{m}+y_{1}+z_{m}$ and $||z_{m}||<\epsilon_{m}$. By applying the Lemma (\ref{unity decomposition}) for the ideal $I'$, there is a sequence of elements $\{y_{1,m}'\}\subset I'$ such that $y_{1,m}'\rightarrow 1$ as $m \rightarrow \infty$. Hence $1 \in I'$. Let $$I''=I+\langle1-s_{m+1}(s_{m+1})^{*}\rangle+\cdots+\langle1-s_{m+n-2}(s_{m+n-2})^{*}\rangle.$$ By similarly obtaining a decomposition for $1 \in I'=I''+\langle1-s_{m+n-1}(s_{m+n-1})^{*}\rangle$, there is a sequence $\{y_{2,m}''\}$ of elements in $I''$ such that $y_{2,m}''\rightarrow 1$ as $m \rightarrow \infty$ which implies that $1 \in I''$. By a similar procedure for ideals $I+\langle1-s_{m+1}(s_{m+1})^{*}\rangle+\cdots+\langle1-s_{m+n-i}(s_{m+n-i})^{*}\rangle$ for all $2 \leq i \leq n-1$, we conclude that $I=B_{\Theta}^{m,n}$ when $I \nsubseteq J$.
\end{proof}

\begin{proposition}
	Let $m,n \in \mathbb{N}_{0};m+n>1$ and let $\Theta \in \bigwedge_{m+n}$. Let $\Gamma=\{\gamma_{ij}\in \mathbb{R}:1 \leq i<j\leq m+n\}$. Then the following statements are true.
	\begin{enumerate}[(i)]
		\item $B_{\Theta}^{m,n} \cong B_{\Gamma}^{m,n}$ implies $\mathcal{A}_{\Theta}^{m+n} \cong \mathcal{A}_{\Gamma}^{m+n}$.
		\item $C_{\Theta}^{m,n} \cong C_{\Gamma}^{m,n}$ implies $\mathcal{A}_{\Theta}^{m+n} \cong \mathcal{A}_{\Gamma}^{m+n}$.
	\end{enumerate}
\end{proposition}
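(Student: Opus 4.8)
The plan is to recover the noncommutative torus from the larger algebra in a way that does not refer to the chosen generators --- namely as the quotient by the \emph{unique} maximal closed ideal --- and then to analyse the resulting comparison of tori. For (i): by the preceding proposition, when $\Theta\in\bigwedge_{m+n}$ the algebra $B_{\Theta}^{m,n}$ has a unique maximal closed ideal $J_{\Theta}=\langle 1-s_{m+i}(s_{m+i})^{*}:1\le i\le n\rangle$, and $B_{\Theta}^{m,n}/J_{\Theta}\cong\mathcal{A}_{\Theta}^{m+n}$. First I would observe that an isomorphism $\varphi\colon B_{\Theta}^{m,n}\to B_{\Gamma}^{m,n}$ carries $J_{\Theta}$ onto the (necessarily unique) maximal closed ideal $M_{\Gamma}$ of $B_{\Gamma}^{m,n}$, so that $B_{\Gamma}^{m,n}/M_{\Gamma}\cong\mathcal{A}_{\Theta}^{m+n}$. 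On the other hand $J_{\Gamma}:=\langle 1-s_{m+i}(s_{m+i})^{*}:1\le i\le n\rangle$ is a \emph{proper} ideal of $B_{\Gamma}^{m,n}$, since $B_{\Gamma}^{m,n}/J_{\Gamma}\cong\mathcal{A}_{\Gamma}^{m+n}\neq 0$; hence $J_{\Gamma}\subseteq M_{\Gamma}$, and the quotient map descends to a surjection
\[
\pi\colon\mathcal{A}_{\Gamma}^{m+n}\twoheadrightarrow B_{\Gamma}^{m,n}/M_{\Gamma}\cong\mathcal{A}_{\Theta}^{m+n}.
\]
This reduces (i) to proving that $\pi$ is injective, for then $\pi$ is the asserted isomorphism.

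To establish injectivity I would argue by contradiction: if $\ker\pi\neq 0$ then $\Gamma\notin\bigwedge_{m+n}$, so $\mathcal{A}_{\Gamma}^{m+n}$ has nontrivial centre $Z\cong C(X_{\Gamma})$ with $\dim X_{\Gamma}\ge 1$. Since $\mathcal{A}_{\Theta}^{m+n}$ is simple, $\pi$ maps $Z$ into $\mathbb{C}1$, so $\pi|_{Z}$ is evaluation at a point $t_{0}\in X_{\Gamma}$; hence $\pi$ annihilates the ideal generated by $\{f\in Z:f(t_{0})=0\}$ and factors through the fibre $\mathcal{A}_{t_{0}}$ of the continuous field $\mathcal{A}_{\Gamma}^{m+n}$ over $X_{\Gamma}$. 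As $\mathcal{A}_{t_{0}}$ is a (twisted group) $C^{*}$-algebra with trivial centre it is simple, so the induced surjection $\mathcal{A}_{t_{0}}\twoheadrightarrow\mathcal{A}_{\Theta}^{m+n}$ is an isomorphism. But by the structure theory of possibly degenerate noncommutative tori (see \cite{Phi-2006aa} and the references therein) $\mathcal{A}_{t_{0}}\cong M_{d}\otimes\mathcal{A}_{\Gamma'}^{d'}$ with $d'<m+n$, so Theorem \ref{K-groups general} gives $\mathrm{rank}\,K_{0}(\mathcal{A}_{t_{0}})\le 2^{m+n-2}$, contradicting $K_{0}(\mathcal{A}_{\Theta}^{m+n})\cong\mathbb{Z}^{2^{m+n-1}}$. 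Hence $\ker\pi=0$ and (i) follows.

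For (ii) I would run the identical scheme with $C_{\Theta}^{m,n}$ in place of $B_{\Theta}^{m,n}$: the role of $J_{\Theta}$ is now played by $\ker\big(\beta_{m}^{n}\colon C_{\Theta}^{m,n}\to\mathcal{A}_{\Theta}^{m+n}\big)$, which (refining Corollary \ref{topology}, the maximal element of $\Sigma_{m,n}$ in the topology $\mathscr{T}$ being the unique closed point of the primitive spectrum, together with Proposition \ref{exact-sequence1}) is the unique maximal closed ideal of $C_{\Theta}^{m,n}$, with quotient $\mathcal{A}_{\Theta}^{m+n}$. Transporting the unique-maximal-ideal property across $C_{\Theta}^{m,n}\cong C_{\Gamma}^{m,n}$ and using that $\ker\big(\beta_{m}^{n}\colon C_{\Gamma}^{m,n}\to\mathcal{A}_{\Gamma}^{m+n}\big)$ is proper, one again obtains a surjection $\mathcal{A}_{\Gamma}^{m+n}\twoheadrightarrow\mathcal{A}_{\Theta}^{m+n}$, which is an isomorphism by the centre/$K_{0}$-rank argument above.

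The step I expect to be the main obstacle is the injectivity of $\pi$, i.e.\ the statement that a $\ast$-epimorphism from a noncommutative $(m+n)$-torus onto a \emph{simple} noncommutative $(m+n)$-torus is automatically an isomorphism. Surjections of $C^{*}$-algebras are in general far from injective, so this genuinely requires the ideal and centre structure of degenerate noncommutative tori --- that their proper simple quotients are matrix algebras over strictly lower-dimensional simple tori --- combined with the $K_{0}$-computation of Theorem \ref{K-groups general}; everything preceding it, namely the intrinsic description of the torus as a maximal-ideal quotient and its transport across the given isomorphism, is purely formal.
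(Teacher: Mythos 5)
Your opening move coincides with the paper's: both proofs transport the unique maximal closed ideal $J_{\Theta}=\ker\phi$ across the isomorphism $\alpha$ and compare $M_{\Gamma}:=\alpha(J_{\Theta})$ with $J_{\Gamma}=\ker\psi$, where $\psi\colon B_{\Gamma}^{m,n}\to\mathcal{A}_{\Gamma}^{m+n}$ is the canonical quotient. You diverge at the point you yourself flag as the main obstacle, and there the paper's route is far shorter: instead of showing that the induced surjection $\pi\colon\mathcal{A}_{\Gamma}^{m+n}\to B_{\Gamma}^{m,n}/M_{\Gamma}$ is injective, one simply checks the inclusion $M_{\Gamma}\subseteq J_{\Gamma}$ directly. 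Indeed $M_{\Gamma}$ is generated by the elements $1-\alpha(s_{m+i})\alpha(s_{m+i})^{*}$, and $\psi$ annihilates each of them because $\psi(\alpha(s_{m+i}))$ is an isometry in $\mathcal{A}_{\Gamma}^{m+n}$, which carries a faithful tracial state for \emph{every} $\Gamma$ and is therefore finite, so all of its isometries are unitaries. Combined with maximality of $M_{\Gamma}$ and properness of $J_{\Gamma}$ this gives $M_{\Gamma}=J_{\Gamma}$ and hence $\mathcal{A}_{\Theta}^{m+n}\cong B_{\Gamma}^{m,n}/M_{\Gamma}=B_{\Gamma}^{m,n}/J_{\Gamma}=\mathcal{A}_{\Gamma}^{m+n}$ at once; the same computation handles part (ii) once the unique maximal ideal of $C_{\Theta}^{m,n}$ is identified as you describe.

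Your alternative closing argument does reach the right conclusion, but it imports a substantial amount of external structure theory that the statement does not require, and two of its steps need shoring up if you keep it. First, "$\mathcal{A}_{t_{0}}$ has trivial centre, hence is simple" is not a valid implication for general $C^{*}$-algebras; the simplicity of the fibres is a genuine theorem about twisted group algebras of $\mathbb{Z}^{m+n}$ (the primitive ideal space is the dual of the symmetrizer subgroup) and must be cited as such. Second, the isomorphism $\mathcal{A}_{t_{0}}\cong M_{d}\otimes\mathcal{A}_{\Gamma'}^{d'}$ is in general only guaranteed up to Morita equivalence (the quotient group $\mathbb{Z}^{m+n}/H$ can have torsion), which is still enough for your $K_{0}$-rank bound $2^{m+n-2}<2^{m+n-1}$, but the statement as written is too strong. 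The finiteness observation above renders all of this unnecessary.
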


\begin{proof} Let $\alpha:B_{\Theta}^{m,n} \rightarrow B_{\Gamma}^{m,n}$ be an isomorphism. Consider canonical surjective homomomorphisms $\phi:B_{\Theta}^{m,n} \rightarrow \mathcal{A}_{\Theta}^{m+n}$ and $\psi:B_{\Gamma}^{m,n} \rightarrow \mathcal{A}_{\Gamma}^{m+n}$. Since $J=\textrm{ker}\phi$ is the maximal ideal in $B_{\Theta}^{m,n}$ and $\alpha$ is an isomorphism, $\alpha(\textrm{ker}\phi)$ is the maximal ideal in $B_{\Gamma}^{m,n}$. Therefore the ideal $\textrm{ker}\psi \subseteq\alpha(\textrm{ker}\phi)$. By the isomorphism $\alpha$, it follows that  $$\alpha(\textrm{ker}\phi)=\langle1-\alpha(s_{m+1})\alpha(s_{m+1}^{*}),1-\alpha(s_{m+2})\alpha(s_{m+2}^{*}),\cdots,1-\alpha(s_{m+n})\alpha(s_{m+n}^{*})\rangle.$$ This implies that, for every $1 \leq i \leq n$,
	
	$$\psi(1-\alpha(s_{m+i})\alpha(s_{m+i}^{*}))=1-\psi\circ\alpha(s_{m+i}s_{m+i}^{*})=0.$$
	
	Hence, for $1 \leq i \leq n$, $1-\alpha(s_{m+i})\alpha(s_{m+i}^{*}) \in \textrm{ker}\psi$ which implies $\alpha(\textrm{ker}\phi) \subseteq \textrm{ker}\psi$, i.e $\alpha(\textrm{ker}\phi) = \textrm{ker}\psi$. So, $\mathcal{A}_{\Theta}^{m+n} \cong B_{\Theta}^{m,n}/\textrm{ker}\phi=B_{\Gamma}^{m,n}/\alpha(\textrm{ker}\phi)=B_{\Gamma}^{m,n}/\textrm{ker}\psi=\mathcal{A}_{\Gamma}^{m+n}$.
\end{proof}

\noindent\textbf{Form of a representation}:  Let $\pi: B_{\Theta}^{m,n} \rightarrow \mathcal{L}(\mathcal{H})$ be a unital  representation of $B_{\Theta}^{m,n}$.
Let
$\mathcal{P}= 1-\pi(s_{1}s_{2}...s_{m+n}s_{m+n}^{*}s_{m+n-1}^{*}...s_{1}^{*})= 1-\pi(s_{m+1}s_{m+2}...s_{m+n}s_{m+n}^{*}s_{m+n-1}^{*}...s_{m+1}^{*})$ be the defect projection of the isometry $\pi(s_{1}...s_{m+n})$. Define
\begin{IEEEeqnarray*}{lCl}
	\mathcal{H}_{0} &=&\{h \in \mathcal{H}:
	\mbox{ for every } k >0 \mbox{ there exists  }  h_{k} \in \mathcal{H}\mbox{ such that } h=\pi(s_{1}...s_{m+n})^{k}h_{k}\},\\
	K&=&{\mbox{CLS}\{\pi(s_{1}...s_{m+n})^{k}\eta:  k \geq 0,\, \eta \in \mathcal{P}\mathcal{H}\}}.
\end{IEEEeqnarray*}
\begin{proposition} \label{isom of Hilbert spaces}
	The subspaces $\mathcal{H}_{0}$ and $K$ are closed linear reducing subspaces of $\mathcal{H}$  and   $\mathcal{H}_{0}^{\perp}=K$. Moreover, there exists a Hilbert space isomorphism $$K \simeq \ell^{2}(\mathbb{N}_{0}) \otimes \mathcal{P}\mathcal{H};  \quad  \pi(s_{1}...s_{m+n})^{n}\eta \mapsto e_{n} \otimes \eta.$$
\end{proposition}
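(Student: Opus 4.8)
The plan is to deduce everything from the classical von Neumann--Wold decomposition applied to the single isometry $V:=\pi(s_1s_2\cdots s_{m+n})$. First I would record that $V$ is an isometry, being a product of the isometries $\pi(s_i)$, and that $\mathcal{P}=1-VV^*$ is exactly the orthogonal projection onto $\ker V^*=(\operatorname{ran}V)^{\perp}$. Unwinding the definitions, $\mathcal{H}_{0}=\bigcap_{k\geq 1}\operatorname{ran}(V^k)$ and $K=\overline{\operatorname{span}}\{V^k\eta:k\geq 0,\ \eta\in\mathcal{P}\mathcal{H}\}=\overline{\operatorname{span}}\{V^k\eta:k\geq 0,\ \eta\in\ker V^*\}$; that is, $\mathcal{H}_0$ is the ``unitary part'' and $K$ the ``shift part'' of the Wold decomposition of $V$. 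Hence, directly from that decomposition, $\mathcal{H}_0$ and $K$ are closed, mutually orthogonal, $V$-reducing, $\mathcal{H}=\mathcal{H}_0\oplus K$ (so $\mathcal{H}_0^{\perp}=K$), the restriction $V|_{\mathcal{H}_0}$ is unitary, and $\{V^k\mathcal{P}\mathcal{H}\}_{k\geq 0}$ is an orthogonal family with each $V^k$ isometric on $\mathcal{P}\mathcal{H}$. This last fact furnishes the asserted Hilbert space isomorphism $K\simeq\ell^2(\mathbb{N}_0)\otimes\mathcal{P}\mathcal{H}$, $V^k\eta\mapsto e_k\otimes\eta$: define it on the algebraic direct sum $\bigoplus_k V^k\mathcal{P}\mathcal{H}$, observe it is isometric there, and extend by continuity. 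This settles the ``moreover'' statement and the equality $\mathcal{H}_0^{\perp}=K$.

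It remains to upgrade ``$V$-reducing'' to ``$\pi$-reducing''. From relation~(\ref{R1}), moving the extra copy of $\pi(s_i)$ across the factors of $V=\pi(s_1)\cdots\pi(s_{m+n})$ using $s_as_b=e^{2\pi\mathrm{i}\theta_{ab}}s_bs_a$ for $a<b$ yields
\[
\pi(s_i)V=\lambda_i\,V\,\pi(s_i),\qquad \lambda_i=\Big(\prod_{j<i}e^{-2\pi\mathrm{i}\theta_{ji}}\Big)\Big(\prod_{j>i}e^{2\pi\mathrm{i}\theta_{ij}}\Big)\in\mathbb{T}.
\]
Consequently $\pi(s_i)\operatorname{ran}(V^k)=\lambda_i^{k}\,V^k\pi(s_i)\mathcal{H}\subseteq\operatorname{ran}(V^k)$ for every $k$, whence $\pi(s_i)\mathcal{H}_0\subseteq\mathcal{H}_0$ for all $i$. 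Thus each $\pi(s_i)|_{\mathcal{H}_0}$ is a well-defined isometry of $\mathcal{H}_0$, and the ordered product of these restrictions equals $V|_{\mathcal{H}_0}$, which is unitary. Since a product of isometries of a Hilbert space can be unitary only if every factor is unitary (the leftmost factor must have full range; peel it off and induct), each $\pi(s_i)|_{\mathcal{H}_0}$ is unitary; in particular $\pi(s_i)\mathcal{H}_0=\mathcal{H}_0$, so for $y=\pi(s_i)z\in\mathcal{H}_0$ one gets $\pi(s_i)^*y=\pi(s_i)^*\pi(s_i)z=z\in\mathcal{H}_0$, i.e.\ $\pi(s_i)^*\mathcal{H}_0\subseteq\mathcal{H}_0$ too. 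Therefore $\mathcal{H}_0$ is invariant under every $\pi(s_i)$ and every $\pi(s_i)^*$, hence under all of $\pi(B_{\Theta}^{m,n})$, so $\mathcal{H}_0$ reduces $\pi$; and then $K=\mathcal{H}_0^{\perp}$ reduces $\pi$ as well.

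I expect the genuinely delicate point to be handling the adjoints $\pi(s_i)^*$ for the non-unitary generators ($i>m$): the relations of $B_{\Theta}^{m,n}$ provide no commutation rule for $s_i^*s_j$ with $j\neq i$ (this is precisely where $B_{\Theta}^{m,n}$ differs from $C_{\Theta}^{m,n}$), so one cannot push $\pi(s_i)^*$ through $V$ up to a scalar the way one pushes $\pi(s_i)$. The observation that ``a product of isometries equal to a unitary must have unitary factors'', applied on the invariant subspace $\mathcal{H}_0$, is what circumvents this; the rest is the routine bookkeeping of the Wold decomposition for a single isometry together with the elementary orthogonality and norm computations needed for $K\simeq\ell^2(\mathbb{N}_0)\otimes\mathcal{P}\mathcal{H}$.
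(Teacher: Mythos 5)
Your proposal is correct, and its core is the same as the paper's: both proofs are just the von Neumann--Wold decomposition of the single isometry $V=\pi(s_1\cdots s_{m+n})$, with $\mathcal{H}_0=\bigcap_k\operatorname{ran}(V^k)$ as the unitary part and $K$ as the shift part, followed by the standard identification $V^k\eta\mapsto e_k\otimes\eta$. The paper re-derives the Wold facts by hand (closedness of $\mathcal{H}_0$ via $\xi=\alpha^l(\alpha^*)^l\xi$, the two inclusions $\mathcal{H}_0\subset K^{\perp}$ and $K^{\perp}\subset\mathcal{H}_0$, and the unitary $K\to\ell^2(\mathbb{N}_0)\otimes\mathcal{P}\mathcal{H}$ via an orthonormal basis of $\mathcal{P}\mathcal{H}$), whereas you cite the classical theorem; that is a harmless difference. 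The one genuine divergence is the reducing claim: the paper's proof of this proposition does not actually address it, deferring it to parts (i)--(ii) of the subsequent proposition on the form of a representation, where $\pi(s_i)\mathcal{H}_0\subseteq\mathcal{H}_0$ is obtained from the same scalar commutation $\pi(s_i)V=\lambda_iV\pi(s_i)$ and surjectivity of $\pi(s_i)|_{\mathcal{H}_0}$ is shown by explicitly factoring $\xi=\pi(s_1)\bigl(\pi(s_2)\cdots\pi(s_{m+n})\xi_1\bigr)$ with the second factor checked to lie in $\mathcal{H}_0$. Your route to surjectivity --- a product of isometries that is unitary forces each factor to be unitary, peeling from the left --- is a cleaner way to reach the same conclusion, and you correctly identify that this sidesteps the absence of commutation rules for $s_i^*s_j$ in $B_{\Theta}^{m,n}$. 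So your write-up proves slightly more than the paper does at this point, by the same underlying mechanism.
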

\begin{proof} Let $\alpha=\pi(s_{1}\cdots s_{m+n})$.
	Let $\{\xi_{k}\}_{k\in \mathbb{N}} \subset \mathcal{H}_{0}$ and $\xi \in \mathcal{H}$ such that $(\xi_{k}) \rightarrow \xi$. Therefore for all $k,l>0$, there exist $\xi_{l,k}$ such that $ \xi_{k}=\pi(s_{1}...s_{m+n})^{l}\xi_{l,k}$. Then we have $$(1-\alpha^{l}(\alpha^{*})^{l})\xi=\lim_{k}(1-\alpha^{l}(\alpha^{*})^{l})\alpha^{l}\xi_{l,k}=0.$$ Therefore, we get  $\xi=\alpha^{l}(\alpha^{*})^{l}\xi$, which proves that  $\mathcal{H}_{0}$ is closed.
	Let $\mathcal{P}y \in \mathcal{P}\mathcal{H}$, $\alpha z \in \alpha\mathcal{H}$ for some $y,z \in \mathcal{H}$. Then it follows by the definitions of $\mathcal{P}$ and $\alpha$ that $<\mathcal{P}y,\alpha z>=0$. Hence $\mathcal{P}\mathcal{H} \perp \alpha\mathcal{H}$. Let $\xi \in \mathcal{H}_{0}$, $\eta \in \mathcal{P}\mathcal{H}$ and $k \in \mathbb{N}_{0}$. Then by the definition of $\mathcal{H}_{0}$ and the orthogonality of the subspaces $\mathcal{P}\mathcal{H}$ and $\alpha\mathcal{H}$, we have  $$<\xi,\alpha^{k}\eta>=<\alpha^{k+1}\eta_{k},\alpha^{k}\eta>=<\alpha\eta_{k},\eta>=0.$$
	Therefore $\xi \in K^{\perp}$, and hence $\mathcal{H}_{0} \subset K^{\perp}$.
	
	\noindent
	For the converse, let $\xi \in K^{\perp}$. Since $\alpha^{k}p(\alpha^{*})^{k}\xi \in K$, one has
	$$<\mathcal{P}(\alpha^{*})^{k}\xi,\mathcal{P}(\alpha^{*})^{k}\xi>=<\alpha^{k}\mathcal{P}(\alpha^{*})^{k}\xi,\xi>=0.$$
	This proves that  $\mathcal{P}(\alpha^{*})^{k}\xi=0$. For $k \geq 0$, we get  $$\alpha^{k+1}(\alpha^{*})^{k+1}\xi-\alpha^{k}(\alpha^{*})^{k}\xi=\alpha^{k}(\alpha\alpha^{*}-1)(\alpha^{*})^{k}\xi=\alpha^{k}\mathcal{P}(\alpha^{*})^{k}=0$$
	This gives $\alpha^{k+1}(\alpha^{*})^{k+1}\xi=\alpha^{k}(\alpha^{*})^{k}\xi$. Therefore by induction it follows that $\alpha^{k}(\alpha^{*})^{k}\xi=\xi$ for all $k>0$.
	This gives for $k >0$,  $\xi=\alpha^{k}(\alpha^{*})^{k}\xi \in \mathcal{H}_{0}.$ Hence $K^{\perp} \subset \mathcal{H}_{0} $.
\end{proof}

\noindent
Let $(\eta_{i})_{i \in I}$ be an orthonormal basis for $P\mathcal{H}$. Then $(\alpha^{k}\eta_{i})_{k \in \mathbb{N}_{0},i \in i}$ is an orthonormal basis for $K$. Define a map

$$\phi:K \rightarrow \ell^{2}(\mathbb{N}_{0}) \otimes \mathcal{P}\mathcal{H}; \alpha^{n}\eta_{i} \mapsto e_{n} \otimes \eta_{i}.$$
The map $\phi$ is an onto isomorphism.
The identification $\phi K \phi^{*}=\ell^{2}(\mathbb{N}_{0}) \otimes \mathcal{P}\mathcal{H}$ will be denoted by $K \simeq \ell^{2}(\mathbb{N}_{0}) \otimes \mathcal{P}\mathcal{H}$.
\qed\\
We will now describe certain parameters on which the form of a representation of $B_{\Theta}^{m,n}$ depends.  For  that,
define
\begin{IEEEeqnarray*}{lCll}
	u_i&=& (1-s_{m+1}\cdots \hat{s_{i}}\cdots s_{m+n}s_{m+n}^{*} \cdots \hat{s_{i}^{*}}\cdots s_{m+1}^{*})
	+(1-s_{i}s_{i}^{*})s_{m+n}^{*}\cdots\hat{s_{i}}^{*}\cdots s_{m+1}^{*}, \\
	\mathcal{P}_i&=&s_{m+1}...s_{i-1}\hat{s_{i}}
	s_{i+1}...s_{m+n}(1-s_{i}s_{i}^{*})s_{m+n}^{*}...s_{i+1}^{*}\hat{s_{i}}^{*}s_{i-1}^{*}...s_{m+1}^{*}.
\end{IEEEeqnarray*}
In the above expression, for $1 \leq i \leq m+n$ , $s_{m+1}...s_{i-1}\hat{s_{i}}
s_{i+1}...s_{m+n}=s_{m+1}...s_{i-1}s_{i+1}...s_{m+n}$ and $s_{m+n}^{*}...s_{i+1}^{*}\hat{s_{i}}^{*}s_{i-1}^{*}...s_{m+1}^{*}=s_{m+n}^{*}...s_{i+1}^{*}s_{i-1}^{*}...s_{m+1}^{*}$. The following proposition can be thought of as a generalization of Proposition $4.1$ given in \cite{Mor-2013aa}, describing a general form of a representation of $B_{\Theta}^{m,n}$.

\begin{proposition} \label{form of a representation}
	With the set-up given above, the following statements are true.
	\begin{enumerate}[(i)]
		
		\item $\pi(s_{1}...s_{m+n})|_{\mathcal{H}_{0}}$ is a unitary and $\pi(s_{1}...s_{m+n})|_{K} \simeq S^{*} \otimes 1$.
		\item For $1 \leq i \leq m+n$,  $\pi(s_{i})|_{\mathcal{H}_{0}}$ is a unitary operator.
		
		\item The defect projection $\mathcal{P}$ of  $\pi(s_1s_2\cdots s_{m+n})$ commutes with the elements  $u_{i}^{\prime}=\pi(u_{i})$ and  $\mathcal{P}_{i}^{\prime}=\pi(\mathcal{P}_{i})$ for
		$1 \leq i \leq m+n$. The operators  $u_{i}^{\prime}$ and $\mathcal{P}_{i}^{\prime}$ are unitary operators and projection operators, respectively, on $\mathcal{P}\mathcal{H}$.
		\item Using the identification of $K$ with $\ell^{2}(\mathbb{N}_{0}) \otimes \mathcal{P}\mathcal{H}$, one has the following:
		\[
		\pi(s_{k})|_{K}=\begin{cases}
			\prod_{i=1}^{k-1}(\lambda_{ik}^{N})^{*} \prod_{i=k+1}^{m+n}(\lambda_{ki}^{N})^{l} \otimes {u_{k}}^{\prime}, & \mbox{ if } 1 \leq k \leq m, \cr
			\prod_{i=1}^{k-1}\overline{\lambda_{ik}}S^* \prod_{i=1}^{k-1}(\lambda_{ik}^{N})^{*} \prod_{i=k+1}^{m+n}(\lambda_{ki}^{N})^{l} \otimes {u_{k}}^{\prime}\mathcal{P}_{k}^{\prime}&\cr
			+ \prod_{i=1}^{k-1}(\lambda_{ik}^{N})^{*} \prod_{i=k+1}^{m+n}(\lambda_{ki}^{N})^{l} \otimes {u_{k}}^{\prime}(1-\mathcal{P}_{k}^{\prime}) & \mbox{ if } m+1 \leq k \leq m+n, \cr
		\end{cases}
		\]
		where $\lambda_{ij}=e^{2\pi\textrm{i}\theta_{ij}}$ for $i < j$ and $\lambda^{N}(e_{n})=\lambda^{n}(e_{n})$.

	\end{enumerate}
\end{proposition}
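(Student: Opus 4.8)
The plan is to analyze the representation $\pi$ by decomposing $\mathcal{H}=\mathcal{H}_0\oplus K$ as in Proposition~\ref{isom of Hilbert spaces} and treating each summand separately. For part~(i), since $\mathcal{H}_0$ is by definition the set of vectors that remain in the range of $\alpha^k=\pi(s_1\cdots s_{m+n})^k$ for all $k$, the restriction $\alpha|_{\mathcal{H}_0}$ is a surjective isometry, hence unitary; on $K$, the identification $\phi$ was built precisely so that $\alpha$ becomes the adjoint of the shift on the $\ell^2(\mathbb{N}_0)$-leg, tensored with the identity on $\mathcal{P}\mathcal{H}$, which gives $S^*\otimes 1$. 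Part~(ii) follows because on $\mathcal{H}_0$ each $\pi(s_i)$ is already an isometry by relation~(\ref{R2}), and the factorization $\alpha|_{\mathcal{H}_0}$ unitary together with relation~(\ref{R1}) (which lets one move any $s_i$ to the front up to a scalar) forces each $\pi(s_i)$ to have full range on $\mathcal{H}_0$; alternatively one checks $\mathcal{H}_0\subseteq\bigcap_i\operatorname{ran}\pi(s_i)$ directly from the commutation relations and the definition of $\mathcal{H}_0$.

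For part~(iii), the computation is algebraic: one verifies in $B_{\Theta}^{m,n}$ that the listed elements $u_i$ and $\mathcal{P}_i$ commute with the defect projection $1-s_{m+1}\cdots s_{m+n}s_{m+n}^*\cdots s_{m+1}^*$ using only relations~(\ref{R1})--(\ref{R3}), so applying $\pi$ transports this to $\mathcal{P}\mathcal{H}$; that $u_i'$ is unitary and $\mathcal{P}_i'$ a projection on $\mathcal{P}\mathcal{H}$ is again a relation-chase, noting $\mathcal{P}_i$ is manifestly a projection (it is $v(1-s_is_i^*)v^*$ for an isometry $v$) and $u_i$ is of the form ``complementary projection plus partial isometry onto the complement,'' which squares correctly to a unitary once one knows the two summands have orthogonal source and range projections. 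Here relation~(\ref{R1}) is used repeatedly to reorder the $s_j$'s so that the telescoping cancellations become visible.

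Part~(iv) is the heart and the main obstacle. The strategy is: on $K\simeq\ell^2(\mathbb{N}_0)\otimes\mathcal{P}\mathcal{H}$, compute $\pi(s_k)$ by evaluating it on the basis vectors $\alpha^n\eta$ with $\eta\in\mathcal{P}\mathcal{H}$. Using relation~(\ref{R1}) one writes $s_k\,\alpha^n = (\text{scalar}(n))\,\alpha^n\, s_k \cdot(\text{correction})$, where the scalar records how $s_k$ commutes past the $n$ copies of $s_1\cdots s_{m+n}$ — this produces the $\lambda$-powers $\prod_{i<k}(\lambda_{ik}^N)^*\prod_{i>k}(\lambda_{ki}^N)^l$ acting on the $\ell^2(\mathbb{N}_0)$-leg, via the identification $N\leftrightarrow$ ``position in the $\alpha$-tower.'' For $k\le m$ this is all, giving the first case. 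For $m+1\le k\le m+n$, $s_k$ is only an isometry, so one must split $\mathcal{P}\mathcal{H}$ according to whether a vector lies in the range of $s_k$ or in its defect: on the piece where $\mathcal{P}_k'$ acts (the defect direction) applying $s_k$ moves one step up the $\alpha$-tower, producing the extra $S^*$ and scalar $\prod_{i<k}\overline{\lambda_{ik}}$; on the complement $s_k$ acts ``horizontally'' within $\mathcal{P}\mathcal{H}$ via $u_k'$. The bookkeeping of exactly which scalars $\lambda_{ij}$ appear with which exponent, and verifying that the two summands in the second case are orthogonal so that the resulting operator is genuinely $\pi(s_k)|_K$, is where the real work lies; I would organize it by first doing the case $n=1$ (recovering Weber's Proposition~4.1 in \cite{Mor-2013aa}) and then inducting on $n$, peeling off one isometry at a time exactly as in the short exact sequences of Lemma~\ref{short exact sequence free}. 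The definitions of $\mathcal{H}_0$, $K$, $\mathcal{P}$, $u_i$, $\mathcal{P}_i$ were all chosen to make these identities hold, so the proof is a (lengthy) verification rather than a search for the right objects.
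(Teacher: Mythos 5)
Your proposal matches the paper's proof in all essentials: the paper establishes (i) and (ii) exactly as you describe (surjectivity of $\alpha$ on $\mathcal{H}_{0}$, commutation of each $\pi(s_i)$ with $\alpha$ up to a scalar plus the factorization argument), treats (iii) as a relation-chase, and proves (iv) by evaluating $\pi(s_k)$ on the basis vectors $\alpha^{l}\eta$, commuting past $\alpha^{l}$ to collect the $\lambda$-powers, and splitting $\eta$ via the range projection of $s_1\cdots\hat{s_k}\cdots s_{m+n}$ into the $u_k'\mathcal{P}_k'$ and $u_k'(1-\mathcal{P}_k')$ pieces. The only divergence is your suggested induction on $n$ via Lemma~\ref{short exact sequence free}, which the paper does not need (and which sits awkwardly, since those exact sequences quotient out the defect projections rather than restrict representations); the direct computation you also describe is exactly what the paper carries out.
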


\begin{proof}
	\begin{enumerate}[(i)]
		\item Using the definition of $\mathcal{H}_{0}$, we can see that $\alpha$ on $\mathcal{H}_{0}$ is an onto isometry. For every $n \in \mathbb{N}_0$, one has  $$\alpha(e_{n} \otimes \eta) = \alpha(\alpha^{n}\eta)=\alpha^{n+1}\eta = e_{n+1} \otimes \eta$$ which implies $\alpha|_{K} = S^* \otimes 1$.
		
		\item By the defining relations of  $B_{\Theta}^{m,n}$, we have $$\pi(s_{k})\alpha=\prod_{i=1}^{k-1}\lambda_{ik}^{-1}\prod_{i=k+1}^{m+n}\lambda_{k,i}\alpha\pi(s_{k})$$ for all $1 \leq k \leq m$ implying that $\pi(s_{k})$ and $\alpha$ commute upto a scalar. Hence we get  $$\pi(s_{k})\mathcal{H}_{0} \subset \mathcal{H}_{0}\,\, \textrm{for}\,\, 1 \leq k \leq m+n.$$ Let $\xi \in \mathcal{H}_{0}$. Then  for every $l \in \mathbb{N}_0$, we have
		$$\alpha^{l+1}\xi_{l+1}=\xi=\alpha\xi_{1}\,\,\mbox{ for some }\xi_{l+1},\xi_{1}\in \mathcal{H}.$$
		This gives $\xi_{1}=\alpha^{l}\xi_{l+1}$. By above, $\xi=\pi(s_{1})(\pi(s_{2})\cdots\pi(s_{m+n}))\xi_{1}$. Then $$\pi(s_{2})\cdots\pi(s_{m+n})\xi_{1}=\pi(s_{2})\cdots\pi(s_{m+n})\alpha^{l}\xi_{l+1}
		=C\alpha^{l} \pi(s_{2})\cdots\pi(s_{m+n})\xi_{l+1}\in\mathcal{H}_{0},$$
		where $C$ is a scalar.  This implies $\xi \in \pi(s_{1})\mathcal{H}_{0} $ which proves that $\mathcal{H}_{0} \subset \pi(s_{1})\mathcal{H}_{0}$ . In the same manner, for $2 \leq i \leq m+n$, $\mathcal{H}_{0} \subset \pi(s_{i})\mathcal{H}_{0}$. This gives that $\pi(s_{i})\mathcal{H}_{0}=\mathcal{H}_{0}$ for all $1 \leq i \leq m+n$, which proves that  each $\pi(s_{i})$ is a unitary on $\mathcal{H}_{0}$.
		
		\item Using a straightforward verification, one can see  that $\mathcal{P}$ commutes with each $u_{i}^{\prime}$ and each $\mathcal{P}_{i}^{\prime}$ for all $1 \leq i \leq m+n$. Therefore, these  elements when restricted, induce well-defined operators on $\mathcal{P}\mathcal{H}$. For $1 \leq i \leq m+n$, we have $$u_{i}^{\prime}(u_{i}^{\prime})^{*}=(u_{i}^{\prime})^{*}u_{i}^{\prime}=\pi(1-s_{m+1}s_{m+2}...s_{m+n}s_{m+n}^{*}...s_{m+1}^{*}).$$
		Therefore, $u_{i}^{\prime}$ is a unitary on $\mathcal{P}\mathcal{H}$. The other part follows from the fact that  the projection $\mathcal{P}_{i}^{\prime}$ commutes with $\mathcal{P}$.
		
		\item
		For $1 \leq k \leq m$, we have
		\begin{align*}
			&\pi(s_{k})(e_{l} \otimes \eta)\\
			&=
			\pi(s_{k})\alpha^{l}\eta\\
			&=\overline{\lambda_{1k}}^{l}\overline{\lambda_{2k}}^{l}...\overline{\lambda_{k-1 k}}^{l}\lambda_{k k+1}^{l}...\lambda_{k\,m+n}^{l}\pi(s_{1}s_{2}...s_{k-1}s_{k+1}...s_{m+n})^{l}\pi(s_{k})\eta \\
			&=  \prod_{i=1}^{k-1}\lambda_{ik}^{-l}\prod_{i=k+1}^{m+n}\lambda_{ki}^{l}(\alpha^{l}\pi(s_{k})\pi(s_{1}...s_{k-1}s_{k+1}...s_{m+n}s_{m+n}^{*}...s_{k+1}^{*}s_{k-1}^{*}...s_{1}^{*})\eta \\
			&+ \alpha^{l}\pi(s_{k})(1-\pi(s_{1}...s_{k-1}s_{k+1}...s_{m+n}s_{m+n}^{*}...s_{k+1}^{*}s_{k-1}^{*}...s_{1}^{*}))\eta)\\
			&= \prod_{i=1}^{k-1}\lambda_{ik}^{-l}\prod_{i=k+1}^{m+n}\lambda_{ki}^{l}(\prod_{i=1}^{k-1}\overline{\lambda_{ik}}\alpha^{l+1}\pi(s_{m+n}^{*}...s_{k+1}^{*}s_{k-1}^{*}...s_{1}^{*})\eta \\
			&+ \alpha^{l}\pi(s_{k}(1-(s_{1}...s_{k-1}s_{k+1}...s_{m+n}s_{m+n}^{*}...s_{k+1}^{*}s_{k-1}^{*}...s_{1}^{*}))\eta))\\
			&=   \prod_{i=1}^{k-1}\lambda_{ik}^{-l}\prod_{i=k+1}^{m+n}\lambda_{ki}^{l}(\prod_{i=1}^{k-1}\overline{\lambda_{ik}}\alpha^{l+1}\pi(u_{k}\mathcal{P}_{k})\eta + \alpha^{l}\pi(u_{k}(1-\mathcal{P}_{k}))\eta)\\
			&=   \prod_{i=1}^{k-1}\lambda_{ik}^{-l}\prod_{i=k+1}^{m+n}\lambda_{ki}^{l} ( \alpha^{l}u_{k}^{'}\eta)\\
			&=  \prod_{i=1}^{k-1}\lambda_{ik}^{-l}\prod_{i=k+1}^{m+n}\lambda_{ki}^{l}( e_{l}\otimes u_{k}^{'}\eta)\\			
			&=[ \prod_{i=1}^{k-1}(\lambda_{ik}^{N})^{*} \prod_{i=k+1}^{m+n}(\lambda_{ki}^{N})^{l} \otimes u_{k}^{'}]
			(e_{l} \otimes \eta) .
		\end{align*}

		For $m+1 \leq k < m+n$, we have
		\begin{align*}
			&\pi(s_{k})(e_{l} \otimes \eta)\\
			&=
			\pi(s_{k})\alpha^{l}\eta\\
			&=\overline{\lambda_{1k}}^{l}\overline{\lambda_{2k}}^{l}...\overline{\lambda_{k-1 k}}^{l}\lambda_{k k+1}^{n}...\lambda_{k m+n}^{l}\pi(s_{1}s_{2}...s_{k-1}s_{k+1}...s_{m+n})^{l}\pi(s_{k})\eta \\
			&=  \prod_{i=1}^{k-1}\lambda_{ik}^{-l}\prod_{i=k+1}^{m+n}\lambda_{ki}^{l}(\alpha^{n'}\pi(s_{k})\pi(s_{1}...s_{k-1}s_{k+1}...s_{m+n}s_{m+n}^{*}...s_{k+1}^{*}s_{k-1}^{*}...s_{1}^{*})\eta \\
			&+ \alpha^{l}\pi(s_{k})(1-\pi(s_{1}...s_{k-1}s_{k+1}...s_{m+n}s_{m+n}^{*}...s_{k+1}^{*}s_{k-1}^{*}...s_{1}^{*}))\eta)\\
			&= \prod_{i=1}^{k-1}\lambda_{ik}^{-l}\prod_{i=k+1}^{m+n}\lambda_{ki}^{l}(\prod_{i=1}^{k-1}\overline{\lambda_{ik}}\alpha^{l+1}\pi(s_{m+n}^{*}...s_{k+1}^{*}s_{k-1}^{*}...s_{1}^{*})\eta \\
			&+ \alpha^{l}\pi(s_{k}(1-(s_{1}...s_{k-1}s_{k+1}...s_{m+n}s_{m+n}^{*}...s_{k+1}^{*}s_{k-1}^{*}...s_{1}^{*}))\eta))\\
			&=   \prod_{i=1}^{k-1}\lambda_{ik}^{-l}\prod_{i=k+1}^{m+n}\lambda_{ki}^{l}(\prod_{i=1}^{k-1}\overline{\lambda_{ik}}\alpha^{l+1}\pi(u_{k}\mathcal{P}_{k})\eta + \alpha^{l}\pi(u_{k}(1-\mathcal{P}_{k}))\eta)\\
			&=   \prod_{i=1}^{k-1}\lambda_{ik}^{-l}\prod_{i=k+1}^{m+n}\lambda_{ki}^{l} (\prod_{i=1}^{k-1}\overline{\lambda_{ik}}\alpha^{l+1}u_{k}^{'}\mathcal{P}_{k}^{'}\eta + \alpha^{l}u_{k}^{'}(1-\mathcal{P}_{k}^{'})\eta)\\
			&=  \prod_{i=1}^{k-1}\lambda_{ik}^{-l}\prod_{i=k+1}^{m+n}\lambda_{ki}^{l}(\prod_{i=1}^{k-1}\overline{\lambda_{ik}}e_{l+1}\otimes u_{k}^{'}\mathcal{P}_{k}^{'}\eta + e_{l}\otimes u_{k}^{'}(1-\mathcal{P}_{k}^{'})\eta)[\textrm{}]\\
			&= [\prod_{i=1}^{k-1}\overline{\lambda_{ik}}S^* \prod_{i=1}^{k-1}(\lambda_{ik}^{N})^{*} \prod_{i=k+1}^{m+n}(\lambda_{ki}^{N})^{l} \otimes u_{k}^{'}\mathcal{P}_{k}^{'}\\
			&+ \prod_{i=1}^{k-1}(\lambda_{ik}^{N})^{*} \prod_{i=k+1}^{m+n}(\lambda_{ki}^{N})^{l} \otimes u_{k}^{'}(1-\mathcal{P}_{k}^{'})]
			(e_{l} \otimes \eta) .
		\end{align*}
	\end{enumerate}
\end{proof}

\begin{lemma} \label{stability of image of ideals}
	Let $J_{n}$ be the closed ideal of $B_{\Theta}^{m,n}$ generated by $1-(\overrightarrow{\prod_{j=1}^{m+n}}s_j)(\overrightarrow{\prod_{j=1}^{m+n}}s_j)^*$.  Let $\pi$ be a unital representation of   $B_{\Theta}^{m,n}$. Then  $\pi(J_{ n})$ is a stable $C^*$-algebra.
\end{lemma}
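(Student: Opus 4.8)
The plan is to realise $\pi(J_{n})$ as $\mathcal{K}\otimes B$ for a suitable $C^{*}$-algebra $B$; stability is then automatic, since $\mathcal{K}\otimes\mathcal{K}\cong\mathcal{K}$ gives $\mathcal{K}\otimes(\mathcal{K}\otimes B)\cong\mathcal{K}\otimes B$. Write $\alpha=\pi(s_{1}\cdots s_{m+n})$ and let $\mathcal{P}=1-\alpha\alpha^{*}$ be its defect projection, so that $\pi(J_{n})$ is the closed ideal of $\pi(B_{\Theta}^{m,n})$ generated by $\mathcal{P}$. By Proposition \ref{isom of Hilbert spaces} we have $\mathcal{H}=\mathcal{H}_{0}\oplus K$ with $K\simeq\ell^{2}(\mathbb{N}_{0})\otimes\mathcal{P}\mathcal{H}$, and under this identification $\alpha|_{K}=S^{*}\otimes 1$; since $\mathcal{P}\alpha=0$ one gets $\mathcal{P}|_{K}=p\otimes 1$, while $\mathcal{P}|_{\mathcal{H}_{0}}=0$ because $\alpha|_{\mathcal{H}_{0}}$ is unitary by Proposition \ref{form of a representation}(i)--(ii). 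As $\mathcal{H}_{0}$ and $K$ reduce $\pi$, the ideal $\pi(J_{n})$ is carried entirely by $K$, so it suffices to analyse $\pi(J_{n})|_{K}$ inside $\mathcal{L}(\ell^{2}(\mathbb{N}_{0})\otimes\mathcal{P}\mathcal{H})$. I would take $B=\mathcal{P}\,\pi(B_{\Theta}^{m,n})\,\mathcal{P}$, the (unital) hereditary subalgebra cut down by $\mathcal{P}$, regarded as acting on $\mathcal{P}\mathcal{H}$; equivalently, $B$ is generated by the operators $u_{i}^{\prime}$ and $\mathcal{P}_{i}^{\prime}$ of Proposition \ref{form of a representation} restricted to $\mathcal{P}\mathcal{H}$.

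The heart of the proof is the claim that $\pi(a)\mathcal{P}\in\mathcal{K}(\ell^{2}(\mathbb{N}_{0}))\otimes B$ for every $a\in B_{\Theta}^{m,n}$. Since $\mathcal{K}(\ell^{2}(\mathbb{N}_{0}))\otimes B$ is norm closed and the $*$-monomials in $s_{1},\dots,s_{m+n}$ span a dense $*$-subalgebra of $B_{\Theta}^{m,n}$, it is enough to prove $\pi(w)\mathcal{P}\in\mathcal{K}(\ell^{2}(\mathbb{N}_{0}))\otimes B$ for a $*$-monomial $w$, which I would do by induction on the length of $w$. The formulas in Proposition \ref{form of a representation}(iv), together with their adjoints, express each $\pi(s_{i})|_{K}$ and $\pi(s_{i}^{*})|_{K}$ as a finite sum of elementary tensors $T\otimes c$ with $c\in B$ and $T$ a ``monomial operator'' on $\ell^{2}(\mathbb{N}_{0})$, that is, a product of diagonal unitaries $\lambda^{N}$ with at most one factor $S$ or $S^{*}$; such a $T$ maps each basis vector to a scalar multiple of a basis vector, hence maps every rank-one operator on $\ell^{2}(\mathbb{N}_{0})$ to a scalar multiple of another. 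Starting from $\pi(1)\mathcal{P}=\mathcal{P}=p\otimes 1$, which is rank one in the first leg, each further left multiplication by a generator $\pi(s_{i}^{\pm 1})|_{K}$ keeps the first tensor leg a finite linear combination of rank-one operators and the second leg in $B$, so the induction closes. Passing to adjoints gives $\mathcal{P}\pi(b)\in\mathcal{K}(\ell^{2}(\mathbb{N}_{0}))\otimes B$ as well, and therefore $\pi(a)\mathcal{P}\pi(b)=(\pi(a)\mathcal{P})(\mathcal{P}\pi(b))\in\mathcal{K}(\ell^{2}(\mathbb{N}_{0}))\otimes B$. This yields $\pi(J_{n})|_{K}\subseteq\mathcal{K}(\ell^{2}(\mathbb{N}_{0}))\otimes B$.

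For the reverse inclusion, I would note that for $a,b\in\mathbb{N}_{0}$ and $x\in B_{\Theta}^{m,n}$ the element $\alpha^{a}\,\mathcal{P}\,\pi(x)\,\mathcal{P}\,(\alpha^{*})^{b}$ lies in $\pi(J_{n})$ (it is $\mathcal{P}\pi(x)\mathcal{P}\in\pi(J_{n})$ multiplied on the two sides by elements of $\pi(B_{\Theta}^{m,n})$), and on $K$ it equals $|e_{a}\rangle\langle e_{b}|\otimes(\mathcal{P}\pi(x)\mathcal{P})$; as $a,b$ and $x$ vary these elements span a dense subspace of $\mathcal{K}(\ell^{2}(\mathbb{N}_{0}))\otimes B$. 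Combining the two inclusions, $\pi(J_{n})\cong\mathcal{K}(\ell^{2}(\mathbb{N}_{0}))\otimes B$, which is stable. The step I expect to be the real work is the inductive claim of the second paragraph: one has to track the action of each $\pi(s_{i}^{\pm 1})|_{K}$ through the somewhat intricate formulas of Proposition \ref{form of a representation}, confirm that the ``monomial operator $\otimes$ element of $B$'' shape survives multiplication by $\mathcal{P}$ (in particular that the diagonal-unitary factors never destroy finite-rankness of the first leg), and then justify the passage from $*$-monomials to general elements of $B_{\Theta}^{m,n}$ by a density and continuity argument. Everything else — the reverse inclusion and the stability of $\mathcal{K}\otimes B$ — is routine.
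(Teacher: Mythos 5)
Your proposal follows essentially the same route as the paper's proof: both compress by the defect projection $\mathcal{P}$ of $\pi(s_1\cdots s_{m+n})$, identify $K\cong\ell^{2}(\mathbb{N}_{0})\otimes\mathcal{P}\mathcal{H}$ so that $\mathcal{P}$ becomes $p\otimes 1$, use part (iv) of Proposition \ref{form of a representation} to show that monomials cut down by $\mathcal{P}$ lie in $\mathcal{K}\otimes\mathcal{P}\pi(B_{\Theta}^{m,n})\mathcal{P}$, and obtain the reverse inclusion by conjugating $p\otimes T$ with powers of the shift. Your inductive bookkeeping of the ``finite sum of elementary tensors'' shape is, if anything, slightly more careful than the paper's one-line factorization of a monomial as $(A_1\otimes B_1)(p\otimes 1)(A_2\otimes B_2)$, but the underlying argument is the same.
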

\begin{proof}  Let $\mathcal{P}$ be the defect projection of the isometry $\pi(\overrightarrow{\prod_{j=1}^{m+n}}s_j)$.  Define the $C^*$-algebra $E_n$ as follows:
	\[
	E_n=\mathcal{P}\pi(J_n)\mathcal{P}.
	\]
	To get the claim, it is enough to show that $\pi(J_n) \cong \mathcal{K} \otimes E_n$.
	From Proposition (\ref{form of a representation}),   it suffices to show that
	$$\restr{\pi}{K}(J_n)\cong \mathcal{K} \otimes \restr{\pi}{K}(E_n)$$ on the Hilbert space $K$. Identifying $K$ with
	$\ell^{2}(\mathbb{N}_{0}) \otimes \mathcal{P}\mathcal{H}$ as given in Proposition (\ref{isom of Hilbert spaces}), the operators $\pi(\overrightarrow{\prod_{j=1}^{m+n}s_j})$ and $\mathcal{P}$ can be identified with $S^* \otimes 1$ and $p \otimes 1$, respectively.  Therefore, any operator in $E_n$ can be written as $p \otimes T$ for some operator $T \in \mathcal{L}(\mathcal{P}\mathcal{H})$. Define
	\[
	E_n^{\prime}=\{T \in \mathcal{L}(\mathcal{P}\mathcal{H}): p \otimes T \in E_n \}.
	\]
	Then the map $T \mapsto p \otimes T$ gives an isomorphism between  $E_n^{\prime}$ and $E_n$. It  suffices to show that
	$$\pi(J_n)= \mathcal{K} \otimes E_n^{\prime}.$$
	Take $ T \in E_n^{\prime}$. Then $p \otimes T \in E_n \subset \pi(J_n)$. Hence we have
	\[
	(\pi(\overrightarrow{\prod_{j=1}^{m+n}}s_j))^k(p \otimes T)(\pi(\overrightarrow{\prod_{j=1}^{m+n}}s_j)^*)^k=((S^k)^* \otimes 1)(p \otimes T)(S^k \otimes 1)=p_k \otimes T \in \pi(J_n).
	\]
	This shows that $ \mathcal{K} \otimes E_n^{\prime} \subset \pi(J_n)$. To prove the other containment, take a monomial $L$ in $p \otimes 1$ and $\pi(s_i)$. By part $(iv)$ of the Proposition (\ref{form of a representation}), one can write $L$ as
	\[
	L=(A_1\otimes B_1)(p \otimes 1) (A_2\otimes B_2), \, \mbox{ for some } A_1, A_2 \in \mathcal{L}(\ell^2(\mathbb{N})) \mbox{ and } B_1, B_2 \in \mathcal{L}(\ell^2(\mathcal{P}\mathcal{H})).
	\]
	Now we have
	\[
	(p\otimes 1)L(p\otimes 1)=pA_1pA_2p\otimes B_1B_2=C p\otimes B_1B_2,
	\]
	for some constant $C$. This shows that $B_1B_2 \in E_n^{\prime}$. Also, $A_1pA_2 \in \mathcal{K}$. Hence we get
	\[
	L =A_1pA_2 \otimes B_1B_2\in \mathcal{K} \otimes E_n^{\prime}.
	\]
	This proves the claim.
\end{proof}
\qed
\begin{corollary} \label{stability of ideals}
	Let $J_{n}$ be the closed ideal  of $B_{\Theta}^{m,n}$ generated by $1-(\overrightarrow{\prod_{j=1}^{m+n}}s_j)(\overrightarrow{\prod_{j=1}^{m+n}}s_j)^*$.  Then  $J_{ n}$ is a stable $C^*$-algebra.
\end{corollary}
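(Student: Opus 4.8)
The plan is to obtain this as an immediate consequence of Lemma \ref{stability of image of ideals}. The point is that stability is a $*$-isomorphism invariant, so it suffices to realise $J_n$ as $\pi(J_n)$ for some \emph{unital} representation $\pi$ of $B_\Theta^{m,n}$ that is faithful on $J_n$; in fact a representation faithful on all of $B_\Theta^{m,n}$ will do.

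Concretely, first I would fix a faithful representation of $B_\Theta^{m,n}$ on some Hilbert space — for instance the universal representation, or a direct sum of GNS representations of a separating family of states, which exists since $B_\Theta^{m,n}$ is separable. Since $B_\Theta^{m,n}$ is unital (the relation $s_i^{*}s_i=1$ forces a unit), compressing to the range of the image of the unit yields a faithful unital representation $\pi:B_\Theta^{m,n}\to\mathcal{L}(\mathcal{H})$; this is precisely the hypothesis needed to invoke Lemma \ref{stability of image of ideals}. Restricting $\pi$ to the closed ideal $J_n$ gives a $*$-isomorphism $\pi|_{J_n}:J_n\xrightarrow{\;\cong\;}\pi(J_n)$ onto its image.

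Finally, Lemma \ref{stability of image of ideals} asserts that $\pi(J_n)$ is a stable $C^*$-algebra, i.e. $\pi(J_n)\cong\mathcal{K}\otimes\pi(J_n)$; transporting this isomorphism along $(\pi|_{J_n})^{-1}$ gives $J_n\cong\mathcal{K}\otimes J_n$, so $J_n$ is stable. There is no real obstacle here: all of the substance — the identification $\pi(J_n)\cong\mathcal{K}\otimes E_n'$ obtained from the Wold-type decomposition of Proposition \ref{isom of Hilbert spaces} together with the explicit form of the generators on $K\simeq\ell^2(\mathbb{N}_0)\otimes\mathcal{P}\mathcal{H}$ in Proposition \ref{form of a representation} — has already been carried out inside Lemma \ref{stability of image of ideals}. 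The only mild care needed is to ensure the chosen representation is unital, so that the lemma applies verbatim.
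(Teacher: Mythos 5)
Your proposal is correct and follows exactly the paper's own argument: take $\pi$ to be a faithful (unital) representation, note that $\pi|_{J_n}$ is then a $*$-isomorphism onto $\pi(J_n)$, and apply Lemma (\ref{stability of image of ideals}). The extra care you take about existence of a faithful unital representation and transporting stability along the isomorphism is fine but is left implicit in the paper.
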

\begin{proof} The claim follows immediately if one takes $\pi$ to be a faithful representation of  $B_{\Theta}^{m,n}$ in Lemma (\ref{stability of image of ideals}).
\end{proof}	
\qed \\
Before proceeding to the main aim, we extract the following result about the truncation of $B_{\Theta}^{m,n}$.
\begin{proposition} \label{truncation} Let $\mathcal{P}$ be the defect projection of the isometry $\overrightarrow{\prod_{j=1}^{m+n}}s_j \in  B_{\Theta}^{m,n}$. Define $E=\mathcal{P}B_{\Theta}^{m,n}\mathcal{P}$. Then $E$ is the $C^*$-algebra generated by $\mathcal{P}u_i, \mathcal{P}\mathcal{P}_j$, $1\leq i \leq m+n$ and $m+1\leq j \leq m+n$.
\end{proposition}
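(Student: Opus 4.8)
The plan is to show that the $C^*$-subalgebra $E=\mathcal{P}B_{\Theta}^{m,n}\mathcal{P}$ is generated exactly by the listed corner elements $\mathcal{P}u_i$ for $1\le i\le m+n$ and $\mathcal{P}\mathcal{P}_j$ for $m+1\le j\le m+n$. One inclusion is immediate: since $\mathcal{P}$ commutes with each $u_i'$ and each $\mathcal{P}_j'$ (this is part $(iii)$ of Proposition \ref{form of a representation}, and $u_i,\mathcal{P}_j$ are explicit $*$-polynomials in the generators), each of $\mathcal{P}u_i=\mathcal{P}u_i\mathcal{P}$ and $\mathcal{P}\mathcal{P}_j=\mathcal{P}\mathcal{P}_j\mathcal{P}$ lies in $E$; hence the $C^*$-algebra $E_0$ they generate is contained in $E$. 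The substance is the reverse inclusion $E\subseteq E_0$.

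First I would reduce to a spanning statement: $B_{\Theta}^{m,n}$ is the closed linear span of $*$-monomials in the generators $s_1,\dots,s_{m+n}$, so $E=\mathcal{P}B_{\Theta}^{m,n}\mathcal{P}$ is the closed linear span of elements $\mathcal{P}L\mathcal{P}$ with $L$ such a $*$-monomial. It therefore suffices to show $\mathcal{P}L\mathcal{P}\in E_0$ for every $*$-monomial $L$. To analyze $\mathcal{P}L\mathcal{P}$, I would pass to a faithful representation $\pi$ (so no information is lost) and use the explicit form of $\pi(s_k)\restriction_K$ from part $(iv)$ of Proposition \ref{form of a representation} together with the identifications $K\simeq\ell^2(\mathbb{N}_0)\otimes\mathcal{P}\mathcal{H}$, $\mathcal{P}\leftrightarrow p\otimes 1$, and $\pi(\overrightarrow{\prod}s_j)\leftrightarrow S^*\otimes 1$ from Proposition \ref{isom of Hilbert spaces}. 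On $\mathcal{H}_0$ the element $\mathcal{P}$ acts as $0$, so only the $K$-part matters. Each $\pi(s_k)\restriction_K$ is a sum of terms of the shape $(\text{shift-type operator on }\ell^2(\mathbb{N}_0))\otimes(\text{unitary or unitary}\times\text{projection on }\mathcal{P}\mathcal{H})$, where the $\mathcal{P}\mathcal{H}$-factor is built from $u_k'$ and $\mathcal{P}_k'$. Expanding the monomial $L$ and sandwiching by $p\otimes 1$ collapses the first tensor leg to a scalar multiple of $p$ (as in the computation $pA_1pA_2p\otimes B_1B_2 = Cp\otimes B_1B_2$ already used in the proof of Lemma \ref{stability of image of ideals}), leaving $\mathcal{P}L\mathcal{P}$ equal to a scalar multiple of $p\otimes(\text{word in }u_i',\mathcal{P}_j')$, i.e.\ to a scalar multiple of a word in $\mathcal{P}u_i$ and $\mathcal{P}\mathcal{P}_j$. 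Taking closed linear spans gives $E\subseteq E_0$.

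The main obstacle is bookkeeping the scalars and the possible vanishing: when one sandwiches a long monomial by $p\otimes 1$, intermediate insertions of $p$ force many products of shift operators to be multiples of $p$ or else $0$, and one must argue that in the nonzero case the surviving $\mathcal{P}\mathcal{H}$-factor is genuinely a word in $u_i'$ and $\mathcal{P}_j'$ and not some other operator — this is where the precise algebraic identities $u_i'(u_i')^*=(u_i')^*u_i'=\mathcal{P}$ and the commutation of $\mathcal{P}_j'$ with $\mathcal{P}$ from Proposition \ref{form of a representation}(iii) do the work. A clean way to organize this is to observe that the $C^*$-algebra generated by $\{\pi(s_k)\restriction_K\}$ inside $\mathcal{L}(\ell^2(\mathbb{N}_0)\otimes\mathcal{P}\mathcal{H})$ is contained in the $C^*$-algebra generated by $\mathcal{T}\otimes 1$ together with $1\otimes u_k'$ and $1\otimes\mathcal{P}_k'$ (since each generator, after factoring out its leading scalar, is a finite sum of products of $S^*\otimes 1$, $S\otimes 1$, and such elementary tensors), and then the corner by $p\otimes 1$ of the former sits inside $p\otimes(\text{$C^*$-algebra generated by }u_k',\mathcal{P}_k')$, because $p\mathcal{T}p=\mathbb{C}p$. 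Unwinding the identification $K\simeq\ell^2(\mathbb{N}_0)\otimes\mathcal{P}\mathcal{H}$ back to $B_{\Theta}^{m,n}$ yields the stated generating set.
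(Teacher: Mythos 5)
Your proposal is correct and takes essentially the same route as the paper: the easy inclusion via Proposition (\ref{form of a representation})(iii), then the reverse inclusion by passing to a faithful representation, expanding a $*$-monomial using part $(iv)$, and observing that compression by $\mathcal{P}\cong p\otimes 1$ collapses the $\ell^{2}(\mathbb{N}_0)$-leg to a scalar multiple of $p$, leaving a word in the $u_i'$ and $\mathcal{P}_j'$. Your extra bookkeeping via $p\mathcal{T}p=\mathbb{C}p$ only makes explicit what the paper leaves implicit.
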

\begin{proof} From part $(iii)$ of Propositon (\ref{form of a representation}), we have
	$$\mathcal{P}u_i\mathcal{P}=\mathcal{P}u_i \mbox{ and }  \mathcal{P}\mathcal{P}_j\mathcal{P}=\mathcal{P}\mathcal{P}_j.$$
	This shows that the $C^*$-algebra generated by $\mathcal{P}u_i$'s and  $\mathcal{P}\mathcal{P}_j$'s  is contained in $E$. To prove the other containment, take $\pi$ to be a faithful representation of $B_{\Theta}^{m,n}$ acting on a Hilbert space $\mathcal{H}$. Thanks to  Proposition (\ref{isom of Hilbert spaces}), we can assume that $$\mathcal{H} \cong  \mathcal{H}_0 \oplus (\ell^{2}(\mathbb{N}_{0})) \otimes \mathcal{P}\mathcal{H}, \quad \mathcal{P} \cong 0 \oplus ( p \otimes 1).$$ Take any monomial $Q(s_1,s_2,\cdots s_{m+n})$ in the generators $s_1,s_2,\cdots _{m+n}$. By part $(iv)$ of the Proposition (\ref{form of a representation}), it follows that
	$$\pi(\mathcal{P}Q(s_1,s_2,\cdots s_{m+n})\mathcal{P})\cong p \otimes Q^{\prime}(u_1^{\prime},u_2^{\prime}, \cdots U_{m+n}^{\prime}, \mathcal{P}_{m+1}^{\prime}, \cdots \mathcal{P}_{m+n}^{\prime}),$$
	where $Q^{\prime}$ is a polynomial in $u_i^{\prime}$'s and $\mathcal{P}_j^{\prime}$'s. Since $p \otimes Q^{\prime}(u_1^{\prime},u_2^{\prime}, \cdots U_{m+n}^{\prime}, \mathcal{P}_{m+1}^{\prime}, \cdots \mathcal{P}_{m+n}^{\prime})$ is image of a polynomial in the variables  $\mathcal{P}u_i\mathcal{P}=\mathcal{P}u_i$ and  $\mathcal{P}\mathcal{P}_j\mathcal{P}=\mathcal{P}\mathcal{P}_j$, we get the claim.
\end{proof}
\qed
\begin{theorem}  \label{K-stability free}
	Let $ m,n \in \mathbb{N}$ with $m+n >1$. Let $\pi$ be a unital representation of $B_{\Theta}^{m,n}$.  If $\Theta \in \bigwedge_{m+n}$ then  the $C^*$-algebra $\pi(B_{\Theta}^{m,n})$ is $K$-stable. In particular,  $B_{\Theta}^{m,n}$ is $K$-stable.
\end{theorem}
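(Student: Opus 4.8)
The plan is to exhibit $\pi(B_{\Theta}^{m,n})$ as the middle term of a short exact sequence whose kernel is stable and whose quotient is a simple noncommutative torus, and then to appeal to Proposition \ref{longexactseqfivelemma}(i). The point is that every genuinely hard step has already been isolated in Lemma \ref{stability of image of ideals} (the corner/truncation identification $\pi(J_n)\cong\mathcal{K}\otimes E_n'$) and in Lemma \ref{short exact sequence free}; what remains is to assemble them.

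First I would invoke Lemma \ref{short exact sequence free} with $i=n$ to get the short exact sequence
\[
0 \longrightarrow J_n \longrightarrow B_{\Theta}^{m,n} \xrightarrow{\ \mu_m^n\ } \mathcal{A}_{\Theta}^{m+n} \longrightarrow 0,
\]
where $J_n$ is the closed ideal of $B_{\Theta}^{m,n}$ generated by the defect projection of $\overrightarrow{\prod_{j=1}^{m+n}}s_j$. Since $\Theta\in\bigwedge_{m+n}$ and $m+n\geq 2$, the noncommutative torus $\mathcal{A}_{\Theta}^{m+n}$ is simple and is $K$-stable by \cite{Rie-1987aa}. By Lemma \ref{stability of image of ideals}, $\pi(J_n)$ is a stable $C^*$-algebra, hence $K$-stable by \cite[Proposition~2.6]{Tho-1991aa}.

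Next I would push this picture through $\pi$. As $J_n$ is an ideal of $B_{\Theta}^{m,n}$, its image $\pi(J_n)$ is an ideal of $\pi(B_{\Theta}^{m,n})$, and $\pi(B_{\Theta}^{m,n})/\pi(J_n)$ is a quotient of $B_{\Theta}^{m,n}/J_n\cong\mathcal{A}_{\Theta}^{m+n}$. By simplicity of $\mathcal{A}_{\Theta}^{m+n}$ there are exactly two possibilities. If $\pi(B_{\Theta}^{m,n})=\pi(J_n)$, then $\pi(B_{\Theta}^{m,n})$ is stable, hence $K$-stable, and we are done. Otherwise $\pi(B_{\Theta}^{m,n})/\pi(J_n)\cong\mathcal{A}_{\Theta}^{m+n}$, so we obtain
\[
0 \longrightarrow \pi(J_n) \longrightarrow \pi(B_{\Theta}^{m,n}) \longrightarrow \mathcal{A}_{\Theta}^{m+n} \longrightarrow 0,
\]
in which the ideal is $K$-stable (being stable) and the quotient is $K$-stable; Proposition \ref{longexactseqfivelemma}(i) then gives $K$-stability of $\pi(B_{\Theta}^{m,n})$. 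Finally, taking $\pi$ to be a faithful representation of $B_{\Theta}^{m,n}$ yields that $B_{\Theta}^{m,n}$ is itself $K$-stable.

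The only points needing care are bookkeeping rather than computation: that $\pi(J_n)$ is exactly the kernel of the induced surjection onto the torus quotient (so that the displayed sequence is genuinely exact), and the identification $B_{\Theta}^{m,n}/J_n\cong\mathcal{A}_{\Theta}^{m+n}$ together with simplicity of $\mathcal{A}_{\Theta}^{m+n}$ at a nondegenerate parameter. I expect the main thing to watch is the dichotomy: one must not forget the degenerate branch $\pi(B_{\Theta}^{m,n})=\pi(J_n)$, where there is no quotient torus to feed into the Five lemma and one instead concludes directly from stability.
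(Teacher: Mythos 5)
Your proposal is correct and follows essentially the same route as the paper: identify $\pi(J_n)$ as a stable (hence $K$-stable) ideal via Lemma \ref{stability of image of ideals}, recognize the quotient as the simple noncommutative torus $\mathcal{A}_{\Theta}^{m+n}$, and apply Proposition \ref{longexactseqfivelemma}(i). Your extra ``degenerate branch'' $\pi(B_{\Theta}^{m,n})=\pi(J_n)$ is harmless but never occurs for a unital $\pi$, since $J_n$ is contained in the unique maximal ideal $J$ and hence so is $J_n+\ker\pi$.
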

\begin{proof} Note that $\pi(J_n)$ is a closed ideal of $\pi(B_{\Theta}^{m,n})$ and the quotient $C^*$-algebra $\pi(B_{\Theta}^{m,n})/\pi(J_n)$ is generated by $\{\pi(s_i)+J_n: 1\leq i \leq m+n\}$. For $1 \leq i \leq n$, define $a_i=\pi(s_i)+J_n$.  It is not difficult to verify that
	\[
	a_i^*a_i=a_ia_i^*=1, \mbox{ and } a_ia_j=e^{2\pi i \theta_{ij}}a_ja_i.
	\]
	Since $\Theta \in \bigwedge_{m+n}$ and $m+n\geq 2$, it follows that the noncommutative torus $\mathcal{A}_{\Theta}^{m+n}$ is simple. This implies that  $\pi(B_{\Theta}^{m,n})/\pi(J_n)$ is isomorphic to $\mathcal{A}_{\Theta}^{m+n}$, and hence it is $K$-stable. By Corollary (\ref{stability of ideals}), $\pi(J_{n})$ is stable, hence $K$-stable.
	Consider  the following short exact sequence of $C^*$-algebras.
	\[
	0 \longrightarrow \pi(J_{ n}) \longrightarrow
	\pi(B_{\Theta}^{m,n}) \longrightarrow \pi(B_{\Theta}^{m,n})/\pi(J_n) \longrightarrow 0.
	\]
	Using Proposition (\ref{longexactseqfivelemma}), we get $K$-stability of $\pi(B_{\Theta}^{m,n})$.
	The rest of the claim follows  if
	one takes $\pi$ to be a faithful representation of  $B_{\Theta}^{m,n}$.
\end{proof}

\begin{corollary}  \label{K-stability free 2}
	Let $ m,n \in \mathbb{N}$ with $m+n >1$ and  $\Theta \in \bigwedge_{m+n}$.  Let $J$ be a proper closed ideal of $B_{\Theta}^{m,n}$. Then $J$ is $K$-stable.
\end{corollary}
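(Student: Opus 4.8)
The plan is to deduce this directly from Theorem \ref{K-stability free} via an extension argument. For a proper closed ideal $J \subseteq B_{\Theta}^{m,n}$, consider the canonical short exact sequence
\[
0 \longrightarrow J \xrightarrow{\ \iota\ } B_{\Theta}^{m,n} \xrightarrow{\ q\ } B_{\Theta}^{m,n}/J \longrightarrow 0 .
\]
By the ``in particular'' clause of Theorem \ref{K-stability free}, the middle algebra $B_{\Theta}^{m,n}$ is $K$-stable. By part $(ii)$ of Proposition \ref{longexactseqfivelemma}, the $K$-stability of $B_{\Theta}^{m,n}$ together with that of the quotient $B_{\Theta}^{m,n}/J$ would force the $K$-stability of $J$. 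Hence the entire problem reduces to showing that $B_{\Theta}^{m,n}/J$ is $K$-stable.

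To see this, I would realize the quotient as the image of a unital representation of $B_{\Theta}^{m,n}$. Since $B_{\Theta}^{m,n}$ is unital, so is $B_{\Theta}^{m,n}/J$; choose a faithful unital representation $\sigma$ of $B_{\Theta}^{m,n}/J$ on a Hilbert space $\mathcal{H}$ and put $\pi = \sigma \circ q$. Then $\pi \colon B_{\Theta}^{m,n} \to \mathcal{L}(\mathcal{H})$ is a unital representation with $\pi(B_{\Theta}^{m,n}) = \sigma(B_{\Theta}^{m,n}/J) \cong B_{\Theta}^{m,n}/J$. Since $\Theta \in \bigwedge_{m+n}$, Theorem \ref{K-stability free} applies and gives that $\pi(B_{\Theta}^{m,n})$, and therefore $B_{\Theta}^{m,n}/J$, is $K$-stable.

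Combining the two observations: in the displayed extension both the middle term $B_{\Theta}^{m,n}$ and the quotient $B_{\Theta}^{m,n}/J$ are $K$-stable, so Proposition \ref{longexactseqfivelemma}$(ii)$ yields the $K$-stability of $J$.

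There is no genuinely hard step here --- all the analytic work was carried out in Lemma \ref{stability of image of ideals} and Theorem \ref{K-stability free}. The only point that needs a line of justification is that $B_{\Theta}^{m,n}/J$ arises as $\pi(B_{\Theta}^{m,n})$ for a \emph{unital} representation $\pi$, which is immediate from the unitality of $B_{\Theta}^{m,n}$; after that the Five-lemma machinery of Proposition \ref{longexactseqfivelemma} does the rest.
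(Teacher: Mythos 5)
Your argument is correct and coincides with the paper's proof: the paper likewise takes a unital representation $\pi$ of $B_{\Theta}^{m,n}$ with $\ker\pi=J$ (i.e.\ your $\sigma\circ q$), applies Theorem \ref{K-stability free} to get $K$-stability of both $B_{\Theta}^{m,n}$ and $\pi(B_{\Theta}^{m,n})\cong B_{\Theta}^{m,n}/J$, and then invokes Proposition \ref{longexactseqfivelemma}$(ii)$. You merely make explicit the (easy) point that such a unital $\pi$ exists.
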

\begin{proof} Take $\pi$ to be a unital representation of $B_{\Theta}^{m,n}$ such that $\ker(\pi)=J$.  Then we have the following short exact sequence of $C^*$-algebras:
	\[
	0 \longrightarrow J \longrightarrow B_{\Theta}^{m,n} \longrightarrow \pi(B_{\Theta}^{m,n}) \longrightarrow 0.
	\]
	Since $B_{\Theta}^{m,n}$ and $\pi(B_{\Theta}^{m,n} )$ are $K$-stable, thanks to Theorem (\ref{K-stability free}), the claim follows from Proposition (\ref{longexactseqfivelemma}).
\end{proof}

\begin{theorem}  \label{K-stability C}
	Let $ m,n \in \mathbb{N}$ with $m+n >1$. Let $\pi$ be a unital representation of   $C_{\Theta}^{m,n}$.  If $\Theta \in \bigwedge_{m+n}$ then  the $C^*$-algebra $\pi(C_{\Theta}^{m,n})$ is $K$-stable. In particular,  $C_{\Theta}^{m,n}$ is $K$-stable.
\end{theorem}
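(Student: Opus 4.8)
The plan is to run an induction on $n$ that peels off one isometry at a time via the short exact sequences $\chi_{m,n}$ of Proposition \ref{exact-sequence}, parallel to (but simpler than) the argument for $B_{\Theta}^{m,n}$ in Theorem \ref{K-stability free}. Since $K$-stability is an intrinsic property of a $C^*$-algebra and $\pi(C_{\Theta}^{m,n})\cong C_{\Theta}^{m,n}/\ker\pi$, it suffices to prove the statement $P(n)$: for every $m\geq 1$ and every $\Theta\in\bigwedge_{m+n}$ with $m+n>1$, every quotient of $C_{\Theta}^{m,n}$ is $K$-stable. The ``in particular'' clause is then the case of the trivial quotient (and is also recorded as Theorem \ref{Algebra $K$-stable}).

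For the base case $P(0)$ one has $m\geq 2$, and $C_{\Theta}^{m,0}=\mathcal{A}_{\Theta}^{m}$ is simple because $\Theta\in\bigwedge_{m}$; so its only quotients are $0$ and $\mathcal{A}_{\Theta}^{m}$, both of which are $K$-stable (a stable $C^*$-algebra is $K$-stable, and $\mathcal{A}_{\Theta}^{m}$ is $K$-stable by \cite{Rie-1987aa}). For the inductive step, assume $P(n-1)$, let $q\colon C_{\Theta}^{m,n}\to Q$ be a surjective $*$-homomorphism, and set $\mathcal{J}=q(I_{m,n})$, a closed ideal of $Q$. First, $Q/\mathcal{J}$ is a quotient of $C_{\Theta}^{m,n}/I_{m,n}$, which equals $C_{\Theta}^{m+1,n-1}$ via $\beta_m$ by Proposition \ref{exact-sequence}; since the parameter is unchanged and $(m+1)+(n-1)=m+n>1$, the hypothesis $P(n-1)$ applies and $Q/\mathcal{J}$ is $K$-stable. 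Second, by Proposition \ref{twisted} we have $I_{m,n}\cong\mathcal{K}\otimes C_{\Theta_{[\widehat{m+1}]}}^{m,n-1}$; since every closed ideal of $\mathcal{K}\otimes B$ is of the form $\mathcal{K}\otimes I$ for an ideal $I$ of $B$, every quotient of $\mathcal{K}\otimes B$ is of the form $\mathcal{K}\otimes(B/I)$ and hence is stable. Thus $\mathcal{J}$ is stable, so $K$-stable. Applying Proposition \ref{longexactseqfivelemma}(i) to $0\to\mathcal{J}\to Q\to Q/\mathcal{J}\to 0$ gives that $Q$ is $K$-stable, which proves $P(n)$. Finally, taking $\ker\pi=0$ (e.g.\ $\pi=\Psi^{m,n}$, faithful by Theorem \ref{faithful}) yields the $K$-stability of $C_{\Theta}^{m,n}$ itself.

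The one genuinely delicate point --- and the step I would check most carefully --- is the $K$-stability of $\mathcal{J}=q(I_{m,n})$: knowing only that $I_{m,n}$ is $K$-stable would not help, since $K$-stability need not pass to quotients. What makes the argument work is that Proposition \ref{twisted} gives the sharper fact that $I_{m,n}$ is \emph{stable} (not merely $K$-stable), and stability does pass to quotients through the description of the ideal lattice of $\mathcal{K}\otimes B$. The remaining points --- recognising $Q/\mathcal{J}$ as a quotient of $C_{\Theta}^{m+1,n-1}$ (immediate from the universal property used to establish $\chi_{m,n}$), keeping the indices and the nondegeneracy hypothesis within range along the induction, and the Five-lemma bookkeeping in Proposition \ref{longexactseqfivelemma} --- are routine.
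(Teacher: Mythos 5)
Your proof is correct, but it follows a genuinely different route from the paper's. The paper disposes of this theorem in three lines: the defining relations of $C_{\Theta}^{m,n}$ imply those of $B_{\Theta}^{m,n}$ (for isometries, $s_i^*s_j=e^{-2\pi \mathrm{i}\theta_{ij}}s_js_i^*$ forces $s_is_j=e^{2\pi \mathrm{i}\theta_{ij}}s_js_i$), so there is a surjection $\Phi\colon B_{\Theta}^{m,n}\to C_{\Theta}^{m,n}$, whence $\pi(C_{\Theta}^{m,n})=(\pi\circ\Phi)(B_{\Theta}^{m,n})$ and the statement is a special case of Theorem (\ref{K-stability free}); all the real work (the Wold-type analysis of Proposition (\ref{form of a representation}), the stability of $\pi(J_n)$ in Lemma (\ref{stability of image of ideals}), and the simplicity of $\mathcal{A}_{\Theta}^{m+n}$) lives there. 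You instead stay entirely within the tensor-twist framework of Sections 2--3 and upgrade the paper's own Theorem (\ref{Algebra $K$-stable}) from the algebra to all of its quotients, inducting along the chain $\chi_{m,n}$: the observation that $q(I_{m,n})$ is a quotient of $\mathcal{K}\otimes C_{\Theta_{[\widehat{m+1}]}}^{m,n-1}$, together with the fact that every closed ideal of $\mathcal{K}\otimes B$ has the form $\mathcal{K}\otimes I$ (so quotients of stable algebras of this form are again stable), correctly plays the role that Lemma (\ref{stability of image of ideals}) plays in the paper, and simplicity of the nondegenerate torus settles your base case exactly as it settles the quotient $\pi(B_{\Theta}^{m,n})/\pi(J_n)$ in the paper. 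You were right to single out the passage of stability to the quotient ideal as the delicate point, and your justification via the ideal lattice of $\mathcal{K}\otimes B$ is the standard and correct one. What your version buys is independence from Section 4 --- no representation theory of the free twist is needed --- while the paper's reduction buys economy: one argument covers both the free and tensor twists, with the $C$ case falling out as a corollary of the $B$ case.
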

\begin{proof} Since generators of $C_{\Theta}^{m,n}$ satisfy the relation, there exists a surjective  homomorphism $\Phi$ from $B_{\Theta}^{m,n}$ to $C_{\Theta}^{m,n}$. Now $\pi \circ \Phi$ is a representation of $B_{\Theta}^{m,n}$ and the image $\pi \circ \Phi(B_{\Theta}^{m,n})$ is equal to $\pi(C_{\Theta}^{m,n})$. Now by Theorem (\ref{K-stability free}), it follows that $\pi(C_{\Theta}^{m,n})$ is $K$-stable. If we take $\pi$ to be a faithful representation of $C_{\Theta}^{m,n}$,  $K$-stability of $C_{\Theta}^{m,n}$ follows.
\end{proof}	
\qed

\begin{corollary} \label{K-stability C2}
	Let $ m,n \in \mathbb{N}$ with $m+n >1$ and  $\Theta \in \bigwedge_{m+n}$.  Let $J$ be a proper closed ideal of $C_{\Theta}^{m,n}$. Then $J$ is $K$-stable.
\end{corollary}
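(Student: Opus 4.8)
The plan is to run exactly the argument used for Corollary (\ref{K-stability free 2}), now that we have $K$-stability available for both the ambient algebra and each of its homomorphic images.

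First I would choose a unital representation $\pi$ of $C_{\Theta}^{m,n}$ on a Hilbert space with $\ker \pi = J$. This is possible because $C_{\Theta}^{m,n}/J$ is a unital $C^*$-algebra, hence admits a faithful unital representation; precomposing with the quotient map $C_{\Theta}^{m,n}\to C_{\Theta}^{m,n}/J$ produces the desired $\pi$. This gives the short exact sequence of $C^*$-algebras
\[
0 \longrightarrow J \longrightarrow C_{\Theta}^{m,n} \xrightarrow{\ \pi\ } \pi(C_{\Theta}^{m,n}) \longrightarrow 0.
\]

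Next I would invoke Theorem (\ref{K-stability C}): since $\Theta \in \bigwedge_{m+n}$ and $m+n>1$, the $C^*$-algebra $C_{\Theta}^{m,n}$ is $K$-stable, and moreover the homomorphic image $\pi(C_{\Theta}^{m,n})$ is $K$-stable as well. Thus in the exact sequence above both the middle term and the quotient term are $K$-stable. Applying part $(ii)$ of Proposition (\ref{longexactseqfivelemma}) — the Five-lemma argument of Thomsen comparing the non-stable and stable $K$-theory long exact sequences — we conclude that the ideal $J$ is $K$-stable.

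I do not expect a genuine obstacle in this argument; it is essentially a formal consequence of the earlier results. The only point that needs a moment of care is the existence of a unital representation of $C_{\Theta}^{m,n}$ with kernel exactly $J$, which, as noted above, follows from taking a faithful representation of the unital quotient $C_{\Theta}^{m,n}/J$ and composing with the quotient map.
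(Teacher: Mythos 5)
Your proposal is correct and is essentially the paper's own argument: the paper leaves this corollary without an explicit proof, but it is proved exactly as you do, mirroring Corollary (\ref{K-stability free 2}) — take a unital representation $\pi$ with $\ker\pi=J$, use Theorem (\ref{K-stability C}) to get $K$-stability of $C_{\Theta}^{m,n}$ and of $\pi(C_{\Theta}^{m,n})$, and apply part $(ii)$ of Proposition (\ref{longexactseqfivelemma}). Your remark on realizing $J$ as the kernel of a unital representation via a faithful representation of the quotient is the right (and only) point needing care.
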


\begin{corollary}\label{Non Stable $K$ groups of Ideals}
	Let $k,n \in \mathbb{N}$ such that $1<k<n$ and let $\Theta \in \bigwedge_{n}$. Let $J_{k}$ be the closed ideal of $C_{\Theta}^{m,n}$ generated by $1-(\overrightarrow{\prod_{j=1}^{m+k}}s_j)(\overrightarrow{\prod_{j=1}^{m+k}}s_j)^*$. Then the following holds.
	\begin{IEEEeqnarray*}{rCll}
		&k_{2m}\big(J_{k}\big)&\cong K_{0}(J_{k})\cong\mathbb{Z}^{2^{k-1}}\\
		&k_{2m+1}\big(J_{k}\big)&\cong k_{-1}\big(J_{k}\big)\cong K_{1}(J_{k})\cong\mathbb{Z}^{2^{k-1}-1}
	\end{IEEEeqnarray*}
	where $m \in \mathbb{N}_{0}$ and $k_{i}\big(J_{k}\big)$ is the $i$-th non-stable $K$-group of $J_{k}$ for $i\in\mathbb{N}_{0}\cup\{-1\}$.
\end{corollary}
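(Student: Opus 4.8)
The plan is to obtain the statement by assembling two earlier results — the $K$-stability of $J_k$ and the explicit computation of its stable $K$-groups — through the defining property of $K$-stability; no fresh computation is required. First I would record that $J_k$ is a proper, nonzero closed ideal of $C_\Theta^{0,n}$ (the ideal of Theorem \ref{$K$-groups Ideals}): it is the kernel of the surjection $\beta_0\circ\cdots\circ\beta_{k-1}\colon C_\Theta^{0,n}\to C_\Theta^{k,n-k}$, which is nonzero — so $J_k\neq C_\Theta^{0,n}$ — because $n-k\geq 1$ makes $C_\Theta^{k,n-k}$ a nonzero unital $C^*$-algebra, and which fails to be injective — so $J_k\neq 0$ — since the isometry $\overrightarrow{\prod_{j=1}^{k}}s_j^{0,n}$ is not a unitary. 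As $\Theta\in\bigwedge_n$ and $n>1$, Corollary \ref{K-stability C2} then applies and shows that $J_k$ is $K$-stable.

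Next I would invoke the definition of $K$-stability: for any $K$-stable $C^*$-algebra $\mathcal{A}$ one has $k_j(\mathcal{A})\cong K_0(\mathcal{A})$ when $j$ is even and $k_j(\mathcal{A})\cong K_1(\mathcal{A})$ when $j$ is odd, for every $j\in\mathbb{N}_0\cup\{-1\}$. Specializing to $\mathcal{A}=J_k$ and $j=2m,\,2m+1,\,-1$ gives $k_{2m}(J_k)\cong K_0(J_k)$ and $k_{2m+1}(J_k)\cong k_{-1}(J_k)\cong K_1(J_k)$. I would remark here that, unlike the situation flagged in the Remark following Proposition \ref{longexactseqfivelemma} — where some care was needed for $k_{-1}$ of a \emph{quotient} in an extension of $K$-stable algebras — in the present situation $J_k$ itself is $K$-stable, so the identification $k_{-1}(J_k)\cong K_1(J_k)$ is immediate from the definition.

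Finally, Theorem \ref{$K$-groups Ideals} provides $K_0(J_k)\cong\mathbb{Z}^{2^{k-1}}$ (generated by $\{[\mathbb{P}_i]\}$) and $K_1(J_k)\cong\mathbb{Z}^{2^{k-1}-1}$ (generated by $\{[\mathbb{U}_i]\}$); substituting these into the isomorphisms of the previous paragraph yields exactly the asserted formulas. The whole argument is therefore a pure assembly of results already in hand, and I do not anticipate any substantive obstacle — the only point deserving a moment's attention is the $k_{-1}$ term, which is disposed of as indicated above.
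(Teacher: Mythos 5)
Your proposal is correct and follows exactly the paper's own (one-line) proof: combine Corollary \ref{K-stability C2} for the $K$-stability of $J_k$ with Theorem \ref{$K$-groups Ideals} for the values of $K_0(J_k)$ and $K_1(J_k)$, and conclude via the definition of $K$-stability. The extra details you supply (properness of the ideal, the remark on $k_{-1}$) are accurate and merely make explicit what the paper leaves implicit.
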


\begin{proof}
	The claim follows from Corollary (\ref{K-stability C2}) and Theorem (\ref{$K$-groups Ideals}).
\end{proof}

\begin{definition} Let $\Theta=\{\theta_{ij}\in \mathbb{R}: 1\leq i<j < \infty\}$ be an infinite tuple of real numbers. Fix $m \in \mathbb{N}_0$.  We  define $C_{\Theta}^{m,\infty}$ to be the universal $C^*$-algebra generated by
	$s_1, s_2, \cdots $ satisfying the following relations;
	\begin{IEEEeqnarray}{rCll}
		s_i^*s_j&=&e^{-2\pi \textrm{i} \theta_{ij} }s_js_i^*,\,\,& \mbox{ if } 1\leq i<j < \infty; \label{Re1}\\
		s_i^*s_i&=& 1,\,\, & \mbox{ if } 1\leq i< \infty; \label{Re2}\\
		s_is_i^*&=&1, \,\, & \mbox{ if }  1\leq i \leq m. \label{Re3}
	\end{IEEEeqnarray}
	Similarly we can define $B_{\Theta}^{m,\infty}$ as the universal $C^*$-algebra generated by
	$s_1, s_2, \cdots $ satisfying the relations $(\ref{Re2}, \ref{Re3})$ and
	\begin{IEEEeqnarray}{rCll}
		s_is_j=e^{2\pi \textrm{i} \theta_{ij} }s_js_i,\, \mbox{ for  } 1\leq i<j < \infty. \label{Re4}
	\end{IEEEeqnarray}
\end{definition}
Let $\Theta_{[l]}=\{\theta_{ij}\in \mathbb{R}: 1\leq i<j \leq l\}$. By the universal property of $C_{\Theta_{[m+n]}}^{m,n}$ and $B_{\Theta_{[m+n]}}^{m,n}$, we get the following maps.
\[
\gamma_n:C_{\Theta_{[m+n]}}^{m,n} \rightarrow C_{\Theta_{[m+n+1]}}^{m,n+1}; \quad
s_i^{m,n} \mapsto  s_i^{m,n+1}, \quad \mbox{ for } 1 \leq i \leq m+n.
\]
\[
\omega_n:B_{\Theta_{[m+n]}}^{m,n} \rightarrow B_{\Theta_{[m+n+1]}}^{m,n+1}; \quad
s_i^{m,n} \mapsto  s_i^{m,n+1}, \quad \mbox{ for } 1 \leq i \leq m+n.
\]
The following proposition says that the limits of the inductive systems $(C_{\Theta_{[m+n]}}^{m,n}, \gamma_n)$, and $(B_{\Theta_{[m+n]}}^{m,n}, \omega_n)$ are  $C_{\Theta}^{m,\infty}$, and $B_{\Theta}^{m,\infty}$, respectively.
\begin{proposition}  \label{inductive}
	Let $\Theta=\{\theta_{ij}\in \mathbb{R}: 1\leq i<j < \infty\}$.
	
	Then we have
	\[
	C_{\Theta}^{m,\infty}=\lim_{n\rightarrow \infty} \, C_{\Theta_{[m+n]}}^{m,n}, \quad \mbox{ and } \quad  B_{\Theta}^{m,\infty}=\lim_{n\rightarrow \infty} \, B_{\Theta_{[m+n]}}^{m,n}.
	\]
\end{proposition}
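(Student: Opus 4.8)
The plan is to play the universal property of the inductive limit off against the universal property of $C_{\Theta}^{m,\infty}$, using the standard fact that a $C^*$-inductive limit is generated by the union of the images of its building blocks. I would prove the statement for $C_{\Theta}^{m,\infty}$ in detail; the case of $B_{\Theta}^{m,\infty}$ is word for word the same, with relation $(\ref{Re1})$ replaced by $(\ref{Re4})$ and $\gamma_n$ replaced by $\omega_n$. Write $A_n=C_{\Theta_{[m+n]}}^{m,n}$ and let $A_\infty=\lim_n(A_n,\gamma_n)$ denote the $C^*$-inductive limit, with canonical maps $\gamma_{\infty,n}\colon A_n\to A_\infty$. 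Recall that $\gamma_{\infty,n+1}\circ\gamma_n=\gamma_{\infty,n}$, that $\bigcup_n\gamma_{\infty,n}(A_n)$ is dense in $A_\infty$, and that $A_\infty$ satisfies the universal property with respect to compatible families of $*$-homomorphisms out of the $A_n$.

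First I would construct a $*$-homomorphism $\Psi\colon A_\infty\to C_{\Theta}^{m,\infty}$. For each $n$ with $m+n\geq1$, the elements $s_1,\dots,s_{m+n}$ of $C_{\Theta}^{m,\infty}$ satisfy relations $(\ref{Re1})$, $(\ref{Re2})$ and $(\ref{Re3})$ restricted to indices at most $m+n$, so the universal property of $A_n$ gives a unique $*$-homomorphism $\psi_n\colon A_n\to C_{\Theta}^{m,\infty}$ with $\psi_n(s_i^{m,n})=s_i$. Since $\gamma_n(s_i^{m,n})=s_i^{m,n+1}$, the family $(\psi_n)_n$ is compatible, so the universal property of $A_\infty$ yields the desired $\Psi$ with $\Psi\circ\gamma_{\infty,n}=\psi_n$.

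Next I would build a map in the reverse direction. Put $t_i:=\gamma_{\infty,n}(s_i^{m,n})\in A_\infty$ for any $n$ large enough that $s_i^{m,n}$ is defined; this is independent of the choice of $n$ by compatibility of the $\gamma_n$. Any defining relation of $C_{\Theta}^{m,\infty}$ involves only finitely many of the $s_i$, hence already holds among the corresponding generators of some $A_N$, and applying $\gamma_{\infty,N}$ shows that the $t_i$ satisfy the same relation in $A_\infty$. The universal property of $C_{\Theta}^{m,\infty}$ then produces a unique $*$-homomorphism $\Phi\colon C_{\Theta}^{m,\infty}\to A_\infty$ with $\Phi(s_i)=t_i$. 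I would then check that $\Psi$ and $\Phi$ are mutually inverse: on generators $\Psi(\Phi(s_i))=\Psi(t_i)=\psi_n(s_i^{m,n})=s_i$, which forces $\Psi\circ\Phi=\mathrm{id}$ since the $s_i$ generate $C_{\Theta}^{m,\infty}$; and $\Phi(\Psi(\gamma_{\infty,n}(s_i^{m,n})))=\Phi(s_i)=t_i=\gamma_{\infty,n}(s_i^{m,n})$, so $\Phi\circ\Psi$ is the identity on each $\gamma_{\infty,n}(A_n)$ and hence, by density, on all of $A_\infty$.

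I do not expect a genuine obstacle here: the entire argument is bookkeeping with universal properties. The only mildly delicate point is verifying that the elements $t_i$ are well defined and that the relations defining $C_{\Theta}^{m,\infty}$ descend to $A_\infty$, and this is handled immediately by the observation that each defining relation is finitary, so it is witnessed inside a single $A_N$. The statement for $B_{\Theta}^{m,\infty}$ follows by repeating the same four steps verbatim.
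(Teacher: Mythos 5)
Your proof is correct, and it differs from the paper's in the one step that actually carries the weight of the argument. The paper constructs the same forward map (your $\Psi$, their $\Upsilon_\infty$) from the compatible family $A_n\to C_\Theta^{m,\infty}$, but it establishes bijectivity by asserting that each $\Upsilon_n\colon C_{\Theta_{[m+n]}}^{m,n}\to C_\Theta^{m,\infty}$ is \emph{injective} ``by the universal property'' and then deducing injectivity of the induced map on the limit, together with surjectivity from the generators lying in the image. The universal property by itself only yields existence of $\Upsilon_n$, not injectivity; to justify injectivity one would have to exhibit a representation of $C_\Theta^{m,\infty}$ restricting faithfully to the subalgebra generated by $s_1,\dots,s_{m+n}$, which the paper does not do. You sidestep this entirely by building the reverse map $\Phi\colon C_\Theta^{m,\infty}\to A_\infty$ from the universal property of $C_\Theta^{m,\infty}$, using the key observation that every defining relation is finitary and hence already witnessed in some $A_N$, and then checking $\Psi\circ\Phi=\mathrm{id}$ on generators and $\Phi\circ\Psi=\mathrm{id}$ on the dense union $\bigcup_n\gamma_{\infty,n}(A_n)$. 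This buys you a complete argument with no unproved injectivity claim, at the modest cost of one extra universal-property verification; the paper's route is shorter on the page but leaves a real gap. Your reduction of the $B_\Theta^{m,\infty}$ case to the same template is fine.
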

\begin{proof}  We will prove the first part of the claim. The other part follows from the similar argument.
	Assume that
	$$ D= \lim_{n\rightarrow \infty} \, C_{\Theta_{[m+n]}}^{m,n}$$
	and
	$$\gamma_n^{\infty}: C_{\Theta}^{m,n} \rightarrow D$$
	be the associated homomorphism for each $n \in \mathbb{N}_0$.
	Using universal property of $C_{\Theta_{[m+n]}}^{m,n}$, there exists an injective homomorphism
	\[
	\Upsilon_n:C_{\Theta_{[m+n]}}^{m,n} \rightarrow C_{\Theta}^{m,\infty}
	\]
	mapping $s_i^{m,n}$ to $s_i^{m,\infty}$ for $1\leq i \leq m+n$.
	This induces an injective homomorphism
	\[
	\Upsilon_{\infty}: D \rightarrow C_{\Theta}^{m,\infty}
	\]
	such that the following diagram commutes:
	
	\[
	\begin{tikzcd}
		C_{\Theta}^{m,n} \arrow{r}{\gamma_n^{\infty}} \arrow[swap]{dr}{\Upsilon_{n}} & D \arrow{d}{\Upsilon_{\infty}} \\
		& C_{\Theta}^{m,\infty}
	\end{tikzcd}
	\]
	\noindent Since $s_i^{m,\infty}=\Upsilon_n(s_i^{m,n})$  for $1 \leq i \leq m+n$, it follows from the diagram that for all $i \in \mathbb{N}$, the element $s_i^{m,\infty}$ is in the image of $\Upsilon_{\infty}$. This proves surjectivity of $\Upsilon_n$, and hence the claim.
\end{proof}
\qed\\
Similarly as mentioned before, for the set $\Theta=\{\theta_{ij}\in\mathbb{R}:1\leq i<j<\infty\}$, let $M_{\Theta}$ denote the associated skew-symmetric matrix with the entry $\theta_{ji}=-\theta_{ij}$ for $i<j$.
Let
$$\mbox{$\bigwedge_{\infty}$}=\{M_{\Theta}: \exists  N \mbox{ such that for all  } n>N \,\, M_{\Theta_{[n]}} \in \mbox{$\bigwedge$}_{n} \}.$$
\begin{theorem} \label{Inductive limit K group}
	Let $\Theta \in \bigwedge_{\infty}$. Then $C_{\Theta}^{m,\infty}$ and $B_{\Theta}^{m,\infty}$ are $K$-stable. Moreover, UCT holds for $C_{\Theta}^{m,\infty}$ for each $m \in \mathbb{N}_0$.
\end{theorem}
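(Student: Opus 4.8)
The plan is to realize both $C_{\Theta}^{m,\infty}$ and $B_{\Theta}^{m,\infty}$ as sequential inductive limits of the finite-stage algebras by Proposition~\ref{inductive}, and then transport $K$-stability and the UCT along these limits. The only input beyond the excerpt that I need is the continuity (with respect to countable inductive limits) of Thomsen's non-stable $K$-functors $k_i$ and of the stable $K$-functors $K_i$, together with the closure of the bootstrap category $\mathcal{N}$ under countable inductive limits; all of these are standard.

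First I would reduce to a system of $K$-stable building blocks. Since $\Theta \in \bigwedge_{\infty}$, fix $N$ with $M_{\Theta_{[l]}} \in \bigwedge_l$ for all $l > N$. Then for every $n$ with $m+n > \max\{N,1\}$ one has $\Theta_{[m+n]} \in \bigwedge_{m+n}$, so the algebras $C_{\Theta_{[m+n]}}^{m,n}$ and $B_{\Theta_{[m+n]}}^{m,n}$ are $K$-stable by Theorem~\ref{Algebra $K$-stable} and Theorem~\ref{K-stability free}, and lie in $\mathcal{N}$ by Theorem~\ref{UCT}. The inductive limit is unchanged if we pass from the system $(C_{\Theta_{[m+n]}}^{m,n},\gamma_n)_{n\geq 0}$ to the cofinal subsystem indexed by $n$ with $m+n>\max\{N,1\}$, and similarly for $(B_{\Theta_{[m+n]}}^{m,n},\omega_n)$; so without loss of generality every algebra in the system is $K$-stable and in $\mathcal{N}$.

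Next, writing $A = \lim_n A_n$ for either system, I would argue $K$-stability as follows. Matrix amplification commutes with inductive limits, $M_r(\lim_n A_n) \cong \lim_n M_r(A_n)$, and the functors $k_i$ ($i \in \mathbb{N}_0 \cup \{-1\}$) and $K_i$ are continuous. Hence for each $r \geq 2$ the comparison map $(i_r)_* : k_i(M_{r-1}(A)) \to k_i(M_r(A))$ is the inductive limit of the maps $(i_r)_*: k_i(M_{r-1}(A_n)) \to k_i(M_r(A_n))$, each of which is an isomorphism since $A_n$ is $K$-stable; an inductive limit of isomorphisms of abelian groups is an isomorphism, so $A$ is $K$-stable. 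Applying this to both systems gives the $K$-stability of $C_{\Theta}^{m,\infty}$ and $B_{\Theta}^{m,\infty}$. For the UCT, since $\mathcal{N}$ is closed under countable inductive limits and each $C_{\Theta_{[m+n]}}^{m,n}$ lies in $\mathcal{N}$, the limit $C_{\Theta}^{m,\infty}$ lies in $\mathcal{N}$ and therefore satisfies the UCT.

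The main obstacle is precisely the permanence of $K$-stability under inductive limits, i.e. the continuity of Thomsen's functors $k_i = \pi_{i+1}(\widehat{\mathcal{U}}(\,\cdot\,))$ and its compatibility with the structural inclusions $i_r$. If one wants to avoid quoting this, one must check directly that quasi-unitaries over $\lim_n A_n$ (and over its suspensions and matrix amplifications) are approximated by quasi-unitaries pulled back from the stages $A_n$, that suspension commutes with the limit, and that $\pi_0$ of the quasi-unitary groups is continuous; this is the only genuinely technical point, while the remaining ingredients (cofinality, $M_r(-)$ commuting with limits, closure of $\mathcal{N}$) are routine.
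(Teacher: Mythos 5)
Your proposal is correct and follows essentially the same route as the paper: both pass to the cofinal subsystem of $K$-stable finite-stage algebras guaranteed by $\Theta \in \bigwedge_{\infty}$, invoke the continuity of the functors $K_i$ and $k_i$ under countable inductive limits to transport $K$-stability to the limit, and obtain the UCT from the closure of the bootstrap class $\mathcal{N}$ under such limits. The only difference is that you spell out the mechanism (matrix amplification commuting with limits, a limit of isomorphisms being an isomorphism) that the paper leaves implicit in the phrase ``continuous functors.''
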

\begin{proof}  Choose $n_0\in \mathbb{N}$ such that $\Theta_{[m+n_0]}  \in \bigwedge_{m+n_0}$. Hence from Proposition (\ref{inductive}), we have
	$$C_{\Theta}^{m,\infty}=\lim_{n\geq n_0, n\rightarrow \infty} \, C_{\Theta_{[m+n]}}^{m,n}, \quad B_{\Theta}^{m,\infty}=\lim_{n\geq n_0, n\rightarrow \infty} \, B_{\Theta_{[m+n]}}^{m,n}  $$
	Moreover, from Theorem (\ref{K-stability C}, \ref{K-stability free}), the $C^*$-algebras $C_{\Theta_{[m+n]}}^{m,n}$ and  $B_{\Theta_{[m+n]}}^{m,n}$ are $K$-stable for $n\geq n_0$.
	Since $K_0$, $K_1$, and $k_l$ for $l\in \mathbb{N}\cup\{-1\}$ are continuous functors, the  first part of the claim follows.
	
	From Theorem (\ref{UCT}), if follows that $C_{\Theta}^{m,n}$ is in $\mathcal{N}$ for all $n \in \mathbb{N}_0$. Since the category $\mathcal{N}$ is closed under taking countable inductive limits, the other part of the claim follows.
\end{proof}

\begin{corollary}
	Let $\Theta \in \bigwedge_{\infty}$. Then the non-stable $K$-groups of $C_{\Theta}^{m,\infty}$ are as follows.
	\begin{IEEEeqnarray}{rCll}
		&k_{2j}\big(C_{\Theta}^{m,\infty}\big)&\cong K_{0}\big(C_{\Theta}^{m,\infty}\big)\cong
		\begin{cases}
			\mathbb{Z}^{2^{m-1}}, &\mbox{ if } m\geq1, \cr
			\mathbb{Z}, &\mbox{ if } m=0 , \cr
		\end{cases}
		\\
		&k_{2j+1}\big(C_{\Theta}^{m,\infty}\big)&\cong k_{-1}\big(C_{\Theta}^{m,\infty}\big)\cong K_{1}(C_{\Theta}^{m,\infty}\big)\cong
		\begin{cases}
			\mathbb{Z}^{2^{m-1}},  &\mbox{ if } m\geq1, \cr
			0,  & \mbox{ if } m=0 , \cr
		\end{cases}
	\end{IEEEeqnarray}
	where $j\in\mathbb{N}_{0}$.
\end{corollary}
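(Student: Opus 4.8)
The plan is to deduce everything from the $K$-stability of $C_{\Theta}^{m,\infty}$ already established in Theorem~\ref{Inductive limit K group}, together with a continuity computation of its stable $K$-groups. Since $\Theta \in \bigwedge_{\infty}$, that theorem says $C_{\Theta}^{m,\infty}$ is $K$-stable; hence, straight from the definition of $K$-stability, $k_{i}(C_{\Theta}^{m,\infty}) \cong K_{0}(C_{\Theta}^{m,\infty})$ for every even $i$ and $k_{i}(C_{\Theta}^{m,\infty}) \cong K_{1}(C_{\Theta}^{m,\infty})$ for every odd $i \in \mathbb{N}_{0} \cup \{-1\}$. In particular $k_{2j} \cong K_{0}$ and $k_{2j+1} \cong k_{-1} \cong K_{1}$, so the statement reduces entirely to identifying $K_{0}(C_{\Theta}^{m,\infty})$ and $K_{1}(C_{\Theta}^{m,\infty})$.

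For this I would use Proposition~\ref{inductive}: choosing $n_{0}$ with $\Theta_{[m+n_{0}]} \in \bigwedge_{m+n_{0}}$, we may write $C_{\Theta}^{m,\infty}$ as the inductive limit of $(C_{\Theta_{[m+n]}}^{m,n}, \gamma_{n})_{n \geq n_{0}}$, and since $K_{0}$ and $K_{1}$ are continuous functors, $K_{*}(C_{\Theta}^{m,\infty}) \cong \varinjlim_{n \geq n_{0}} K_{*}(C_{\Theta_{[m+n]}}^{m,n})$. By Theorem~\ref{K-groups general}, each $K_{0}(C_{\Theta_{[m+n]}}^{m,n})$ is free abelian of rank $2^{m-1}$ (of rank $1$, resp.\ $0$, if $m = 0$) with basis the classes $[\mathbb{P}_{i}]$, and similarly $K_{1}$ has basis $[\mathbb{U}_{i}]$; crucially, these classes are the images of the corresponding generators of $K_{*}(\mathcal{A}_{\Theta_{[m]}}^{m})$ under the inclusion $\iota_{n}$ of $\mathcal{A}_{\Theta_{[m]}}^{m}$ into $C_{\Theta_{[m+n]}}^{m,n}$ as the subalgebra generated by $s_{1}^{m,n}, \dots, s_{m}^{m,n}$.

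The heart of the argument is that the connecting maps are isomorphisms on $K$-theory. Because $\gamma_{n}$ sends $s_{i}^{m,n}$ to $s_{i}^{m,n+1}$ for all $1 \leq i \leq m+n$, one has $\gamma_{n} \circ \iota_{n} = \iota_{n+1}$ as homomorphisms $\mathcal{A}_{\Theta_{[m]}}^{m} \to C_{\Theta_{[m+n+1]}}^{m,n+1}$; hence $(\gamma_{n})_{*}$ carries each basis element $[\mathbb{P}_{i}]$ (resp.\ $[\mathbb{U}_{i}]$) of $K_{*}(C_{\Theta_{[m+n]}}^{m,n})$ to the like-named basis element of $K_{*}(C_{\Theta_{[m+n+1]}}^{m,n+1})$, so it is an isomorphism; for $m = 0$ this is trivial, $\gamma_{n}$ being unital with generator $[1]$. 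Therefore the inductive system of $K$-groups is essentially constant, whence $K_{0}(C_{\Theta}^{m,\infty}) \cong \mathbb{Z}^{2^{m-1}}$ and $K_{1}(C_{\Theta}^{m,\infty}) \cong \mathbb{Z}^{2^{m-1}}$ for $m \geq 1$, and $\mathbb{Z}$ resp.\ $0$ for $m = 0$. Feeding this back into the first paragraph gives the corollary.

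I do not anticipate a genuine obstacle here. The only point requiring care is verifying that $(\gamma_{n})_{*}$ fixes the distinguished generators rather than merely sending them to some other basis, and this is immediate once one records that those generators are pulled back from the common subalgebra $\mathcal{A}_{\Theta_{[m]}}^{m}$; everything else is the formal machinery of continuity of $K$-theory and the definition of $K$-stability.
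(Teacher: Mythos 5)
Your proposal is correct and follows essentially the same route as the paper: the paper's own proof simply invokes the $K$-stability from Theorem (\ref{Inductive limit K group}) and leaves the identification of $K_{0}$ and $K_{1}$ of the inductive limit implicit, whereas you spell out the continuity-of-$K$-theory step and verify that the connecting maps $(\gamma_{n})_{*}$ fix the distinguished generators $[\mathbb{P}_i]$, $[\mathbb{U}_i]$ coming from the common subalgebra $\mathcal{A}_{\Theta_{[m]}}^{m}$. This is a faithful (and more complete) rendering of the intended argument.
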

\begin{proof} Theorem (\ref{Inductive limit K group})\, gives the $K$-stability of $C_{\Theta}^{m,\infty}$. This implies the claim.
\end{proof}

\newsection{$K$-stability of $\mathcal{U}$-twisted isometries}
In this section, we fix a $n\choose 2$-tuple $\mathcal{U}$ of commuting unitaries, and study $n$-tuples  of $\mathcal{U}$-twisted isometries and free $\mathcal{U}$-twisted isometries. We prove that if the spectrum  $\sigma(\mathcal{U})$  of the commutative $C^*$-algebra generated by $\mathcal{U}$ has no degenerate skew-symmetric matrix, then the $C^*$-algebra generated by such tuple is $K$-stable. Throughout this section, we call
$\sigma(\mathcal{U})$ the joint spectrum of $\mathcal{U}$.

\begin{definition} \cite{K}
	Let $\mathcal{A}$ be a $C^{*}$-algebra and let $X$ be a compact Haussdorff space. Let $\mathcal{Z}M(\mathcal{A})$ be the center of the multiplier algebra of $\mathcal{A}$. Then $\mathcal{A}$ is a \emph{$C(X)$-algebra} if there is a unital
	$\ast$-homomorphism $\psi:C(X)\rightarrow \mathcal{Z}M(\mathcal{A})$.
\end{definition}

For any $x\in X$, consider the following set.
$$C_{0}(X,\{x\})=\{f\in C(X):f(x)=0\}$$
The set $C_{0}(X,\{x\})$ is a closed ideal of $C(X)$ and $C_{0}(X,\{x\})\mathcal{A}$ is a closed, two-sided ideal of $\mathcal{A}$. Denote $\mathcal{A}/(C_{0}(X,\{x\})\mathcal{A})$ by $\mathcal{A}(x)$. Let $\pi_{x}:\mathcal{A}\rightarrow \mathcal{A}(x)$ be the quotient map. For any $x\in X$, the algebra $\mathcal{A}(x)$ is called a \emph{fibre} of $\mathcal{A}$ at $x$. Let $\pi_{x}(a)=a(x)$ for every $a\in\mathcal{A}$. This gives, for every $a\in\mathcal{A}$, a map $$\Gamma_{a}:X\rightarrow \mathbb{R};\,\,x\longrightarrow \|a(x)\|.$$

\begin{definition}\cite{D}
	The algebra $\mathcal{A}$ is called a continuous $C(X)$ algebra if the map $\Gamma_{a}$ is continuous for every $a\in\mathcal{A}$.
\end{definition}

\begin{definition} \label{definition of U_n}
	Fix $n >1$. Let $\mathcal{U}=\{U_{ij}\}_{1\leq  i<j\leq n}$  be a $n \choose 2$-tuple of  commuting unitaries acting on a Hilbert space $\mathcal{H}$. An $n$-tuple $\mathcal{V}=(V_1,V_2,\cdots , V_n)$ of isometries on $\mathcal{H}$ is called   $\mathcal{U}$-twisted isometries if
	\begin{IEEEeqnarray}{rCll}
		V_i U_{st}&=&U_{st}V_i, & \mbox{ for } 1\leq i \leq n, \, 1 \leq s < t\leq n, \label{center}\\
		V_i^*V_j&=&U_{ij}^*V_jV_i^*, & \mbox{ for } 1 \leq i < j \leq n. \label{doubly twisted relation}
	\end{IEEEeqnarray}
	We call an $n$-tuple $\mathcal{V}=(V_1,V_2,\cdots , V_n)$ of isometries  a free $\mathcal{U}$-twisted isometries if  instead of relation (\ref{doubly twisted relation}), the tuple  satisfies the following weaker relation:
	\begin{IEEEeqnarray}{rCll}
		V_iV_j&=&U_{ij}V_jV_i, & \mbox{ for } 1 \leq i < j \leq n. \label{free twisted relation}
	\end{IEEEeqnarray}
\end{definition}
Let $\mathcal{V}=(V_1,V_2,\cdots,V_n)$ be a tuple of $\mathcal{U}$-twisted isometries. Define $A_{\mathcal{V}}$ to be the $C^*$-subalgebra of $\mathcal{L}(\mathcal{H})$ generated by the isometries $V_1,V_2,\cdots , V_n$ and unitaries $\{U_{ij}\}_{1\leq i<j\leq n}$ in the center of the algebra $A_{\mathcal{V}}$.
Let $X$ be the joint spectrum of the commuting unitaries $\{U_{ij}\}_{1\leq  i<j\leq n}$. Using equation $(\ref{center})$, we get  a homomorphism
\[
\beta: C(X) \rightarrow Z(A_{\mathcal{V}}), \quad f(x)\mapsto f(\mathcal{U}).
\]
This map gives  $A_{\mathcal{V}}$ a $C(X)$-algebra structure. 	For $\Theta \in X$, define $I_{\Theta}$ to be the ideal of $A_{\mathcal{V}}$ generated by $\{\beta(f-f(\Theta)): f \in C(X)\}=\{f(\mathcal{U}):f \in C(X)\}$.
Let  $\pi_{\Theta}: A_{\mathcal{V}} \rightarrow A_{\mathcal{V}}/I_{\Theta}$  to be the quotient map.
Write $\pi_{\Theta}(a)$ as $[a]_{\Theta}$ for $a \in A_\mathcal{V}$. The following theorem establishes  $A_{\mathcal{V}}$  a continuous $C(X)$-algebra.
\begin{theorem} (\label{continuous algebra})
	Let $n>1$. Suppose $\mathcal{V}=(V_1,V_2,\cdots , V_n)$ is a $\mathcal{U}$-twisted isometries with respect to the twist $\mathcal{U}=\{U_{ij}\}_{1\leq  i<j\leq n}$. Then the  $C^*$-algebra
	$A_{\mathcal{V}}$ is a continuous $C(X)$-algebra.
\end{theorem}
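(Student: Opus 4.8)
The goal is to show that for every $a \in A_{\mathcal{V}}$ the function $\Gamma_a \colon X \to \mathbb{R}$, $x \mapsto \|[a]_x\|$, is continuous. Since $A_{\mathcal{V}}$ is already a $C(X)$-algebra via $\beta$, upper semicontinuity of $\Gamma_a$ is automatic (it is a general fact for $C(X)$-algebras that $x \mapsto \|a(x)\|$ is upper semicontinuous), so the real content is lower semicontinuity. The plan is to produce, for each point $\Theta \in X$, a concrete fibre algebra and a concrete representation realizing it, then show the fibre at $\Theta$ is a homomorphic image of $C_{\Theta}^{0,n}$ (more precisely of $C_{\Theta'}^{0,n}$ for the appropriate parameter), and finally compare norms across nearby fibres using a field-of-representations argument.

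First I would identify the fibre $A_{\mathcal{V}}(\Theta) = A_{\mathcal{V}}/I_{\Theta}$. By definition $I_{\Theta}$ is generated by $\{f(\mathcal{U}) : f \in C(X), f(\Theta)=0\}$; modulo this ideal each central unitary $U_{ij}$ becomes the scalar $e^{2\pi \mathrm{i}\theta_{ij}}$ where $\Theta = (e^{2\pi\mathrm{i}\theta_{ij}})_{i<j}$, identifying the point of $X \subset \mathbb{T}^{\binom n2}$ with a skew-symmetric matrix $M_{\Theta}$. Hence the images $[V_i]_{\Theta}$ are isometries satisfying $[V_i]_{\Theta}^*[V_j]_{\Theta} = e^{-2\pi\mathrm{i}\theta_{ij}}[V_j]_{\Theta}[V_i]_{\Theta}^*$, i.e.\ exactly the defining relations of $C_{\Theta}^{0,n}$. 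By the universal property of $C_{\Theta}^{0,n}$ there is a surjection $C_{\Theta}^{0,n} \twoheadrightarrow A_{\mathcal{V}}(\Theta)$; so the fibre is a homomorphic image of $C_{\Theta}^{0,n}$, as the introduction anticipated.

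The heart of the continuity argument is lower semicontinuity, and I expect this to be the main obstacle. The standard route is: given $a \in A_{\mathcal{V}}$ and $\Theta_0 \in X$, approximate $a$ by a noncommutative $*$-polynomial $p$ in the $V_i$'s with coefficients that are continuous functions of $\mathcal{U}$; for such $p$ the map $x \mapsto \|[p]_x\|$ can be analyzed by exhibiting, for $x$ near $\Theta_0$, representations $\sigma_x$ of $A_{\mathcal{V}}(x)$ on a fixed Hilbert space that vary strong-$*$-continuously in $x$ and that are faithful (or norm-attaining) on the relevant finite-dimensional data. Concretely, one uses the representation $\Psi^{0,n}$-type model from Section 2 — realize $C_x^{0,n}$ (equivalently, after passing to a suitable faithful representation of $\mathcal{A}_{\Theta_x}$) on $\mathcal{H}^{0,n} = \ell^2(\mathbb{N})^{\otimes n}$ with $V_i \mapsto S^* \otimes e^{2\pi\mathrm{i}\theta_{i,i+1}N}\otimes\cdots$, whose entries depend continuously on the $\theta_{ij}$. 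Pulling $a$ back through $A_{\mathcal{V}} \to A_{\mathcal{V}}(x) \cong \Psi^{0,n}(C_x^{0,n})$ (here faithfulness of the analogue of $\Psi^{0,n}$, i.e.\ Theorem \ref{faithful}, is used when the fibre is all of $C_x^{0,n}$; in general one works with the quotient), one gets operator-valued functions $x \mapsto \sigma_x([a]_x)$ that are continuous in the strong-$*$ topology, hence $x \mapsto \|\sigma_x([a]_x)\| = \|[a]_x\|$ is lower semicontinuous as a supremum over unit vectors of continuous functions. Combining with upper semicontinuity gives continuity of $\Gamma_a$, and therefore $A_{\mathcal{V}}$ is a continuous $C(X)$-algebra.

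The delicate point — and where I would spend the most care — is the passage from the polynomial case to general $a$ and the uniformity of the approximation: one needs that the surjections $C_x^{0,n} \to A_{\mathcal{V}}(x)$ and the chosen representations can be organized compatibly so that $\|[a]_x - [p]_x\| \le \|a - p\|$ uniformly in $x$ (true since $\pi_x$ is contractive) while simultaneously $\Gamma_{p}$ is continuous; a standard $\varepsilon/3$ argument then upgrades continuity from $p$ to $a$. I would also double-check the edge cases $n$ small and the role of the hypothesis that no degenerate skew-symmetric matrix appears — though strictly that nondegeneracy hypothesis is only needed later for $K$-stability, not for the continuous-$C(X)$-algebra structure, so for this theorem it can be omitted.
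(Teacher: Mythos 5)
Your reduction to lower semicontinuity is reasonable (upper semicontinuity is indeed automatic for $C(X)$-algebras), and your identification of the fibre $A_{\mathcal{V}}(\Theta)$ as a homomorphic image of $C_{\Theta}^{0,n}$ is exactly what the paper records in Lemmas (\ref{doubly non-commuting}) and (\ref{fibers}) --- though the paper only uses that identification later, for $K$-stability, not in the proof of this theorem. The step you yourself flag as delicate is, however, where the argument genuinely breaks. The family of representations you propose, the $\Psi^{0,n}$-type model $V_i \mapsto S^{*}\otimes e^{2\pi \mathrm{i}\theta_{i,i+1}N}\otimes\cdots$, is faithful on the \emph{universal} algebra $C_{x}^{0,n}$, so $\|\sigma_x(p)\|$ computes the universal norm $\|p\|_{C_x^{0,n}}$. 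But the fibre $A_{\mathcal{V}}(x)$ is only a quotient of $C_x^{0,n}$ by an ideal that depends on $x$ in an uncontrolled way, and $\|[p]_x\|_{A_{\mathcal{V}}(x)}$ can be strictly smaller than $\|p\|_{C_x^{0,n}}$ (e.g.\ when the fibre is a proper quotient such as a noncommutative torus or a finite-dimensional representation thereof). The equality $\|\sigma_x([a]_x)\|=\|[a]_x\|$ therefore has no justification, and the parenthetical ``in general one works with the quotient'' is precisely the point where one would need a strong-$*$-continuous field of representations that are \emph{isometric on the varying quotients}; no such field is constructed, and since the kernels $\ker\bigl(C_x^{0,n}\to A_{\mathcal{V}}(x)\bigr)$ can jump as $x$ varies, its existence is exactly as hard as the lower semicontinuity you are trying to prove. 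As written, your argument establishes lower semicontinuity of $x\mapsto\|p\|_{C_x^{0,n}}$, which is not the function $\Gamma_a$.

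For comparison, the paper's proof never leaves the algebra $A_{\mathcal{V}}$ and uses no fibrewise representations: it observes that every element of $\mathbf{S}=\mathcal{V}\cup\mathcal{U}\cup\mathcal{V}^{*}\cup\mathcal{U}^{*}$ has constant fibrewise norm $1$, propagates an estimate on $\pi_{x_1}(\cdot)-\pi_{x_2}(\cdot)$ through products and sums to conclude that $\Gamma_q$ is continuous for every $*$-polynomial $q$ in the generators, and then passes to general $a$ by density and contractivity of the $\pi_x$ --- this last $\varepsilon/3$ step coincides with yours. To repair your route you would either have to work with the concrete operators on $\mathcal{H}$ itself (where the fibre norms are actually attained) or supply the missing continuity of the quotient structure; neither is present in the proposal.
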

\begin{proof}
	Let $\mathbf{S}=\mathcal{V}\cup\mathcal{U} \cup \mathcal{V}^* \cup\mathcal{U}^*$.  We denote by $\mathcal{P}(\mathbf{S})$ the set of all polynomials with elements of   $\mathbf{S}$ as variables. Note that, for any $c \in\textbf{S}$, we have $\Gamma_{c}(x)=\|\pi_{x}(c)\|=1$ for all $x\in X$.
	Hence  $\Gamma_{c}$ is continuous on $X$. Now take  $a,b \in \textbf{S}$. Let $x_{1}, x_{2} \in X$. Then we have
	\begin{IEEEeqnarray*}{rCll}
		\|\pi_{x_{1}}(ab)-\pi_{x_{2}}(ab)\|&=&\{\pi_{x_{1}}(a)\pi_{x_{1}}(b)-\pi_{x_{2}}(a)\pi_{x_{2}}(b)\}\\
		&=&\{\pi_{x_{1}}(a)\pi_{x_{1}}(b)-\pi_{x_{1}}(a)\pi_{x_{2}}(b)+\pi_{x_{1}}(a)\pi_{x_{2}}(b)-\pi_{x_{2}}(a)\pi_{x_{2}}(b)\}\\
		&\leq& \|\pi_{x_{1}}(a)\|\|\pi_{x_{1}}(b)-\pi_{x_{2}}(b)\|+\|\pi_{x_{1}}(a)-\pi_{x_{2}}(a)\|\|\pi_{x_{2}}(b)\|\\
		&=& \|\pi_{x_{1}}(b)-\pi_{x_{2}}(b)\|+\|\pi_{x_{1}}(a)-\pi_{x_{2}}(a)\|.
	\end{IEEEeqnarray*}
	The above implies that $\Gamma_{ab}$ is a continuous map on $X$. Similarly we have,
	\begin{IEEEeqnarray*}{rCll}
		\|\pi_{x_{1}}(a+b)-\pi_{x_{2}}(a+b)\|&=&\{\pi_{x_{1}}(a)+\pi_{x_{1}}(b)-\pi_{x_{2}}(a)-\pi_{x_{2}}(b)\}\\
		&\leq& \|\pi_{x_{1}}(a)-\pi_{x_{2}}(a)\|+\|\pi_{x_{1}}(b)-\pi_{x_{2}}(b)\|.
	\end{IEEEeqnarray*}
	Hence the map $\Gamma_{a+b}$ is continuous on $X$. Therefore for every polynomial $q \in  \mathcal{P}(\mathbf{S})$, the map $\Gamma_{q}$ is continuous on $X$. Let $a \in A_{\mathcal{V}}$ and $x_1, x_2 \in X$.  Fix $\epsilon>0$. Then we can choose a $q\in\mathcal{P}(\mathbf{S})$  such that $\|a-q\|<\epsilon$.
	\begin{IEEEeqnarray*}{rCll}
		\|\pi_{x_1}(a)-\pi_{x_{2}}(a)\|&=&\|\pi_{x_1}(a)-\pi_{x_{1}}(q)+\pi_{x_{1}}(q)-\pi_{x_{2}}(q)+\pi_{x_{2}}(q)-\pi_{x_{2}}(a)\| \\
		&\leq&\|\pi_{x_1}(a)-\pi_{x_{1}}(q)\|+\|\pi_{x_{1}}(q)-\pi_{x_{2}}(q)\|+\|\pi_{x_{2}}(q)-\pi_{x_{2}}(a)\|\\
		&\leq& 2\|a-q\|+ \|\pi_{x_{1}}(q)-\pi_{x_{2}}(q)\|.
	\end{IEEEeqnarray*}
	Thus, by the continuity of $\Gamma_{q}$ it follows that $\Gamma_{a}$ is continuous on $X$. This proves the claim.
\end{proof}

\begin{lemma} \label{doubly non-commuting}
	The tuple $([V_1]_{\Theta}, [V_2]_{\Theta}, \cdots , [V_n]_{\Theta})$ is a doubly non-commuting tuple of isometries with parameter $\Theta$.
\end{lemma}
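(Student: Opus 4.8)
The plan is to push the defining relations of a $\mathcal{U}$-twisted isometry through the quotient $\ast$-homomorphism $\pi_{\Theta}\colon A_{\mathcal{V}}\to A_{\mathcal{V}}/I_{\Theta}$ and to use that, modulo $I_{\Theta}$, each central unitary $U_{ij}$ is sent to a scalar. First I would record that $I_{\Theta}$ is a proper ideal, so $A_{\mathcal{V}}/I_{\Theta}$ is a nonzero unital $C^*$-algebra with unit $[1]_{\Theta}$: indeed $A_{\mathcal{V}}/I_{\Theta}$ is (identified with) the fibre of the $C(X)$-algebra $A_{\mathcal{V}}$ at the point $\Theta\in X$, since $\{f-f(\Theta):f\in C(X)\}=C_{0}(X,\{\Theta\})$ and $\beta$ maps into $Z(A_{\mathcal{V}})$; this fibre is nonzero by the continuity of the $C(X)$-structure established in Theorem \ref{continuous algebra} together with the injectivity of $\beta$. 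Applying $\pi_{\Theta}$ to $V_i^*V_i=1$ then gives $[V_i]_{\Theta}^*[V_i]_{\Theta}=[1]_{\Theta}$, so each $[V_i]_{\Theta}$ is an isometry in $A_{\mathcal{V}}/I_{\Theta}$.

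Next I would pin down the scalars $[U_{ij}]_{\Theta}$. Under the fixed identification of a point of $X\subset\bbbt^{\binom{n}{2}}$ with a skew-symmetric matrix, the point $\Theta$ corresponds to $\big(e^{2\pi\textrm{i}\,\theta_{ij}}\big)_{1\le i<j\le n}$, where $M_{\Theta}=(\theta_{ij})$. Writing $g_{ij}\in C(X)$ for the $ij$-th coordinate function, we have $\beta(g_{ij})=U_{ij}$ and $g_{ij}(\Theta)=e^{2\pi\textrm{i}\,\theta_{ij}}$, hence $U_{ij}-e^{2\pi\textrm{i}\,\theta_{ij}}\,1=\beta\big(g_{ij}-g_{ij}(\Theta)1\big)\in I_{\Theta}$. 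Consequently $[U_{ij}]_{\Theta}=e^{2\pi\textrm{i}\,\theta_{ij}}[1]_{\Theta}$ and $[U_{ij}^*]_{\Theta}=e^{-2\pi\textrm{i}\,\theta_{ij}}[1]_{\Theta}$ for all $1\le i<j\le n$.

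Finally, applying $\pi_{\Theta}$ to relation $(\ref{doubly twisted relation})$, i.e.\ $V_i^*V_j=U_{ij}^*V_jV_i^*$ for $i<j$, and substituting the scalar value of $[U_{ij}^*]_{\Theta}$, one obtains
\[
[V_i]_{\Theta}^*[V_j]_{\Theta}=e^{-2\pi\textrm{i}\,\theta_{ij}}\,[V_j]_{\Theta}[V_i]_{\Theta}^*,\qquad 1\le i<j\le n,
\]
which, together with the isometry relations from the first step, are exactly the defining relations of a doubly non-commuting $n$-tuple of isometries with parameter $\Theta$ (equivalently, the defining relations of $C_{\Theta}^{0,n}$ in Definition \ref{isometry}). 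I do not expect a genuine obstacle here: the computation is a one-line application of a $\ast$-homomorphism, and the only steps requiring a word of care are the bookkeeping of the identification of $\Theta$ with a skew-symmetric matrix (already fixed in the preamble of this section, and one simply has to keep the sign convention consistent so that $U_{ij}$ evaluates to $e^{2\pi\textrm{i}\theta_{ij}}$ at $\Theta$) and the properness of $I_{\Theta}$, which is immediate once $\Theta$ is known to lie in the spectrum $X$.
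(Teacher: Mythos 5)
Your proof is correct and follows exactly the route the paper takes: the paper's entire proof is the one-line observation that $[U_{ij}]_{\Theta}$ is the scalar $e^{2\pi\textrm{i}\theta_{ij}}[1]_{\Theta}$, after which applying $\pi_{\Theta}$ to the relations (\ref{center})--(\ref{doubly twisted relation}) gives the defining relations of $C_{\Theta}^{0,n}$. You simply spell out the details (properness of $I_{\Theta}$, the coordinate-function bookkeeping) that the paper leaves implicit.
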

\begin{proof} It is a immediate consequence of the fact that $[U_{ij}]_{\Theta}=[\theta_{ij}]_{\Theta}$
	for $1\leq i<j \leq n$.
\end{proof}

\begin{lemma} \label{fibers}
	For each $\Theta \in X \cap \bigwedge_n$, the $C^*$-algebra  $A_{\mathcal{V}_{\Theta}}$ is $K$-stable.
\end{lemma}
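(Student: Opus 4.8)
The idea is to identify the fibre $A_{\mathcal{V}_\Theta}$ with a homomorphic image of $C_{\Theta}^{0,n}$ and then invoke the $K$-stability results already established. First I would observe that, by Lemma \ref{doubly non-commuting}, the tuple $([V_1]_\Theta,\dots,[V_n]_\Theta)$ is a doubly non-commuting (i.e. tensor-twisted) tuple of isometries with parameter $\Theta$; that is, the elements $[V_i]_\Theta$ satisfy the defining relations of $C_{\Theta}^{0,n}$ from Definition \ref{isometry} (with $m=0$). Moreover $A_{\mathcal{V}_\Theta} = \pi_\Theta(A_{\mathcal{V}})$ is generated as a $C^*$-algebra by $\{[V_i]_\Theta : 1\leq i\leq n\}$, since in the fibre each central unitary $U_{st}$ is sent to the scalar $e^{2\pi\mathrm{i}\theta_{st}}$. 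Hence by the universal property of $C_{\Theta}^{0,n}$ there is a surjective $*$-homomorphism $\Phi_\Theta : C_{\Theta}^{0,n} \twoheadrightarrow A_{\mathcal{V}_\Theta}$ sending $s_i^{0,n}\mapsto [V_i]_\Theta$. Thus $A_{\mathcal{V}_\Theta}$ is (isomorphic to) a homomorphic image of $C_{\Theta}^{0,n}$ under a representation.

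Next I would apply Theorem \ref{K-stability C}: for $\Theta \in \bigwedge_n$ and $n>1$, every $C^*$-algebra of the form $\pi(C_{\Theta}^{0,n})$ with $\pi$ a unital representation of $C_{\Theta}^{0,n}$ is $K$-stable. Taking $\pi$ to be (a representation implementing) $\Phi_\Theta$ — concretely, compose $\Phi_\Theta$ with a faithful representation of $A_{\mathcal{V}_\Theta}$ on a Hilbert space — we conclude immediately that $A_{\mathcal{V}_\Theta} \cong \pi(C_{\Theta}^{0,n})$ is $K$-stable. This covers the case $n>1$, which is the hypothesis in play; the degenerate case $n=1$ does not arise since $\Theta\in X\cap\bigwedge_n$ forces $n$ even and in particular $n\geq 2$.

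The only genuine point needing care is the very first step: verifying that $\Phi_\Theta$ is well-defined, i.e. that passing to the fibre $\mathcal{A}/(C_0(X,\{\Theta\})\mathcal{A})$ really does send $U_{ij}$ to the scalar $e^{2\pi\mathrm{i}\theta_{ij}}\cdot 1$ and hence that the images $[V_i]_\Theta$ satisfy exactly the relations of $C_{\Theta}^{0,n}$ with no extra relations needed beyond universality — this is the content of Lemma \ref{doubly non-commuting} together with the fact that the functional calculus $f(\mathcal{U})$ descends to $f(\Theta)$ in the fibre at $\Theta$. I do not expect any serious obstacle here; it is a direct consequence of the $C(X)$-algebra structure on $A_{\mathcal{V}}$ established in Theorem \ref{continuous algebra}. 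Once that is noted, the proof is essentially two sentences: $A_{\mathcal{V}_\Theta}$ is a homomorphic image of $C_{\Theta}^{0,n}$, and homomorphic images of $C_{\Theta}^{0,n}$ are $K$-stable when $\Theta\in\bigwedge_n$ by Theorem \ref{K-stability C}.
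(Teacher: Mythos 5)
Your proposal is correct and follows exactly the paper's own argument: use Lemma \ref{doubly non-commuting} and the universal property of $C_{\Theta}^{0,n}$ to exhibit $A_{\mathcal{V}_{\Theta}}$ as a homomorphic image of $C_{\Theta}^{0,n}$, then invoke Theorem \ref{K-stability C}. The extra care you take in checking that $U_{ij}$ becomes the scalar $e^{2\pi\mathrm{i}\theta_{ij}}$ in the fibre is a reasonable elaboration of what the paper leaves implicit, but it is not a different route.
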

\begin{proof}  From Lemma (\ref{doubly non-commuting}) and the universal property of $C_{\Theta}^{n}$, it follows that there is a sujective homomorphism from $C_{\Theta}^{n}$ to $A_{\mathcal{V}_{\Theta}}$. This shows that $A_{\mathcal{V}_{\Theta}}$ is a homomorphic image of $C_{\Theta}^{n}$. Using  Theorem (\ref{K-stability C}),  we get  $K$-stability of $A_{\mathcal{V}_{\Theta}}$.
\end{proof}	

\begin{theorem} \label{Main}
	Let $n>1$ and let
	$\mathcal{U}=\{U_{ij}\}_{1\leq  i<j\leq n}$  be a $n \choose 2$-tuple of  commuting unitaries with joint spectrum $X$ acting on a Hilbert space $\mathcal{H}$.  Let  $\mathcal{V}=(V_1,V_2,\cdots , V_n)$ be a tuple of  $\mathcal{U}$-twisted isometries. If $X \subset \bigwedge_n$ then the $C^*$-algebra $A_\mathcal{V}$ generated by the  elements of  $\mathcal{V}\cup \mathcal{U}$ is $K$-stable.
\end{theorem}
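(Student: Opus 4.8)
The plan is to obtain $K$-stability of $A_{\mathcal{V}}$ directly from the behaviour of its fibres, using the continuous $C(X)$-algebra structure already established and the permanence result of Seth and Vaidyanathan. First I would record the relevant topology of the base: since $\mathcal{U}=\{U_{ij}\}_{1\leq i<j\leq n}$ is a commuting tuple of unitaries, its joint spectrum $X$ is a compact subset of the torus $\bbbt^{n\choose 2}$, hence a second countable, metrizable space of finite covering dimension. By Theorem \ref{continuous algebra}, $A_{\mathcal{V}}$ is a continuous $C(X)$-algebra, whose fibre at $\Theta\in X$ is $A_{\mathcal{V}}(\Theta)=A_{\mathcal{V}}/I_{\Theta}$ with $I_{\Theta}=C_0(X,\{\Theta\})A_{\mathcal{V}}$.

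Next I would identify each fibre. The fibre $A_{\mathcal{V}}(\Theta)$ is generated by the images $[V_1]_{\Theta},\dots,[V_n]_{\Theta}$ of the isometries together with the scalars $[U_{ij}]_{\Theta}=[\theta_{ij}]_{\Theta}$; by Lemma \ref{doubly non-commuting} the tuple $([V_1]_{\Theta},\dots,[V_n]_{\Theta})$ is a doubly non-commuting family of isometries with parameter $\Theta$, so $A_{\mathcal{V}}(\Theta)$ is precisely the algebra $A_{\mathcal{V}_{\Theta}}$. The hypothesis $X\subset\bigwedge_n$ forces $\Theta\in X\cap\bigwedge_n$ for every $\Theta\in X$, so Lemma \ref{fibers} applies and shows that \emph{every} fibre of $A_{\mathcal{V}}$ is $K$-stable. (Concretely, by the universal property of $C_{\Theta}^{n}$ each such fibre is a homomorphic image of $C_{\Theta}^{n}$, which is $K$-stable by Theorem \ref{K-stability C} when $\Theta\in\bigwedge_n$.)

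Finally I would invoke \cite{SetVai-2020aa}: for a locally compact, second countable, finite-dimensional Hausdorff space $X$, a continuous $C_0(X)$-algebra all of whose fibres are $K$-stable is itself $K$-stable. Since $X$ meets these hypotheses and, by the previous paragraph, all fibres of $A_{\mathcal{V}}$ are $K$-stable, we conclude that $A_{\mathcal{V}}$ is $K$-stable.

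I expect the only points needing care to be the verification that the compact base $X\subset\bbbt^{n\choose 2}$ satisfies the topological assumptions in the Seth--Vaidyanathan theorem (automatic, as $X$ embeds in a finite-dimensional Euclidean cube and is therefore metrizable and of finite covering dimension) and the precise identification of the $C(X)$-algebra fibre $A_{\mathcal{V}}(\Theta)$ with the doubly non-commuting isometry algebra $A_{\mathcal{V}_{\Theta}}$ — both of which are handled by Theorem \ref{continuous algebra} and Lemmas \ref{doubly non-commuting}--\ref{fibers}. No six-term sequence or Five lemma argument is required at this stage, since all of the work has been pushed into those earlier results and the cited permanence theorem.
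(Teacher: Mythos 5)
Your proposal is correct and follows essentially the same route as the paper: establish that $X$ is a compact metrizable space of finite covering dimension inside $\bbbt^{n\choose 2}$, use Theorem (\ref{continuous algebra}) to realize $A_{\mathcal{V}}$ as a continuous $C(X)$-algebra, identify the fibres as homomorphic images of $C_{\Theta}^{n}$ via Lemmas (\ref{doubly non-commuting}) and (\ref{fibers}), and invoke the Seth--Vaidyanathan permanence theorem. No gaps; the argument matches the paper's.
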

\begin{proof} Note that $X$ is compact and metrizable. Since $X$ is a closed subset of $\bbbt^{n\choose 2} $, we have
	$$ \mbox{  covering dimension of } X \leq \mbox{  covering dimension of } \bbbt^n= {n \choose 2}.$$
	It follows from Theorem (\ref{continuous algebra}) and  Lemma (\ref{fibers}) that the $C^*$-algebra $A_\mathcal{V}$ is a continuous $C(X)$-algebra with  $K$-stable fibers.  Hence the claim  follows from  the main result of  (\cite{SetVai-2020aa}).
\end{proof}
\begin{theorem} \label{main2}
	Let $n>1$ and let
	$\mathcal{U}=\{U_{ij}\}_{1\leq  i<j\leq n}$  be a $n \choose 2$-tuple of  commuting unitaries with joint spectrum $X$ acting on a Hilbert space $\mathcal{H}$.  Let  $\mathcal{V}=(V_1,V_2,\cdots , V_n)$ be a tuple of  free $\mathcal{U}$-twisted isometries. If $X \subset \bigwedge_n$ then the $C^*$-algebra $B_\mathcal{V}$ generated by the  elements of  $\mathcal{V}\cup \mathcal{U}$   is $K$-stable.
\end{theorem}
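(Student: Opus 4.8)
The plan is to run the proof of Theorem~\ref{Main} with the ``free'' analogues of each ingredient substituted for the ``doubly twisted'' ones. First I would give $B_\mathcal{V}$ a $C(X)$-algebra structure exactly as in the $\mathcal{U}$-twisted case: by relation~(\ref{center}) together with the fact that the $U_{ij}$ are commuting unitaries, each $U_{ij}$ lies in the centre of $B_\mathcal{V}$, so continuous functional calculus yields a unital $\ast$-homomorphism $\beta\colon C(X)\to Z(B_\mathcal{V})$, $f\mapsto f(\mathcal{U})$. For $\Theta\in X$ set $I_\Theta=\langle\{f(\mathcal{U}): f\in C(X),\ f(\Theta)=0\}\rangle$ and let $B_{\mathcal{V}_\Theta}=B_\mathcal{V}/I_\Theta$ be the fibre at $\Theta$, with quotient map $\pi_\Theta$.

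Next, $B_\mathcal{V}$ is a \emph{continuous} $C(X)$-algebra: the proof of Theorem~\ref{continuous algebra} never uses which commutation relation the generators satisfy, only that $\mathbf{S}=\mathcal{V}\cup\mathcal{U}\cup\mathcal{V}^*\cup\mathcal{U}^*$ consists of isometries and unitaries (so $\Gamma_c\equiv 1$ on $\mathbf{S}$), the additive and multiplicative estimates for the maps $\Gamma$, and density of polynomials in $\mathbf{S}$; since relations~(\ref{center}) and~(\ref{free twisted relation}) are preserved by every $\pi_\Theta$, that argument goes through verbatim. For the fibres, identify $\Theta\in X$ with the skew-symmetric matrix $(\theta_{ij})$; then $[U_{ij}]_\Theta=e^{2\pi\mathrm{i}\theta_{ij}}$, so the isometries $[V_1]_\Theta,\dots,[V_n]_\Theta$ satisfy $[V_i]_\Theta[V_j]_\Theta=e^{2\pi\mathrm{i}\theta_{ij}}[V_j]_\Theta[V_i]_\Theta$ for $i<j$, i.e.\ they form a free twist of $n$ isometries with parameter $\Theta$ (the analogue of Lemma~\ref{doubly non-commuting}). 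By the universal property of $B_\Theta^{0,n}=B_\Theta^{n}$ there is a surjective $\ast$-homomorphism $B_\Theta^{n}\to B_{\mathcal{V}_\Theta}$, so $B_{\mathcal{V}_\Theta}$ is a homomorphic image of $B_\Theta^{n}$. Since $X\subset\bigwedge_n$, each $\Theta\in X$ lies in $\bigwedge_n$, and Theorem~\ref{K-stability free} shows that every homomorphic image of $B_\Theta^{n}$ is $K$-stable; hence each fibre $B_{\mathcal{V}_\Theta}$ is $K$-stable.

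Finally, $X$ is a closed subset of $\bbbt^{n\choose 2}$, hence compact, metrizable, and of covering dimension at most ${n\choose 2}<\infty$, so $B_\mathcal{V}$ is a continuous $C(X)$-algebra over a finite-dimensional compact metric space all of whose fibres are $K$-stable; the main result of~\cite{SetVai-2020aa} then gives that $B_\mathcal{V}$ is $K$-stable. The only points needing a little care --- and the closest thing to an obstacle in an otherwise routine transcription of the proof of Theorem~\ref{Main} --- are checking that no relation beyond the free-twist ones survives in $B_{\mathcal{V}_\Theta}$, so that $B_\Theta^{n}$ genuinely surjects onto it, and noting that although Theorem~\ref{K-stability free} is phrased for $m,n\in\mathbb{N}$ it applies with $m=0$: its proof uses only the normal form of Proposition~\ref{form of a representation}, the stability of $J_n$ from Corollary~\ref{stability of ideals}, simplicity of $\mathcal{A}_\Theta^{n}$ for $\Theta\in\bigwedge_n$ and $n\ge 2$, and the Five-Lemma argument of Proposition~\ref{longexactseqfivelemma}, all valid whenever $m+n>1$.
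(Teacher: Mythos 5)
Your proof is correct and follows essentially the same route as the paper's: endow $B_\mathcal{V}$ with a continuous $C(X)$-algebra structure via the argument of Theorem (\ref{continuous algebra}), identify each fibre as a homomorphic image of $B_\Theta^{n}$, invoke Theorem (\ref{K-stability free}) for $K$-stability of the fibres, and conclude with the main result of \cite{SetVai-2020aa}. Your added remark about extending Theorem (\ref{K-stability free}) to the case $m=0$ addresses a point the paper passes over silently, but it does not change the argument.
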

\begin{proof}  The proof is exactly along the lines of Theorem (\ref{Main}). Using similar calculations as done in Theorem (\ref{continuous algebra}) that $B_\mathcal{V}$ is a continuous $C(X)$-algebra.  Moreover, the fibers are homomorphic image of $B_{\Theta}^{n}$, where $\Theta \in X \subset \bigwedge_n$, hence $K$-stable. Applying main result of (\cite{SetVai-2020aa}), we get the claim.
\end{proof}	

\begin{remark}
	Let $\Omega_{n}=\{\Theta=(\theta_{ij}: 1\leq i <j \leq n):\mathcal{A}_{\Theta} \mbox{ is not } K\mbox{-stable} \}$.  If the joint spectrum $\sigma(\mathcal{U})$ contains an isolated point $\theta \in \Omega_n$, then the $C^*$-algebra $A_{\mathcal{V}}$ as defined above can be written as a direct sum, one component of which is a non $K$-stable $C^*$-algebra $\mathcal{A}_{\Theta}$. Since the nonstable $K$-groups and the natural inclusion of a $C^*$-algebra $A$ into its matrix algebra $M_n(A)$ respect the direct sum decomposition, one concludes  that $A_{\mathcal{V}}$ is non $K$-stable.
\end{remark}

\newsection{Concluding remarks}
\begin{remark}  In conclusion, we would like to say the following.
	\begin{enumerate}
		\item  Even though  $K$-stability of $B_{\Theta}^{n}$ has been established in this article,  its nonstable $K$-groups are  still  unknown.  The reason is that the $K$-groups of $B_{\Theta}^{n}$ are not known for $n>2$. To compute these groups, one can proceed along the lines of \cite{Mor-2013aa}. However, the main obstacle is  that we do not have a clear understanding about the $C^*$-algebra  $E$ defined in Proposition (\ref{truncation}). As we have shown,  $E$ is generated by a set of projections and unitaries, but to compute its $K$-groups, one needs to have more information about its structure.  We will take up this problem in another article.
		\item In order to prove $K$-stability of the $C^*$-algebras $A_{\mathcal{V}}$ and $B_{\mathcal{V}}$, we impose the condition on the joint spectrum to be a subset of $\bigwedge_n$. The reason for that is the fibers may not be $K$-stable otherwise, and one can not apply the main theorem of (\cite{SetVai-2020aa}). It would be interesting to explore   the $K$-stability  for the general case.
		\item In Proposition (\ref{form of a representation}), we have described  a general form of a representation of $B_{\Theta}^{m,n}$ which depends on the image of  $u_i$'s and $\mathcal{P}_j$'s. However, if $n>2$ then it is not clear at this point of time whether these parameters are "free" or not.
		\item The $C^*$-algebras $B_{\Theta}^{m,n}$ and $C_{\Theta}^{m,n}$ have natural $\mathbb{Z}^{n}$ and
		$\bbbt^n$ action. It would be intersting to investigate along the line of Connes (\cite{Con-1994aa}) and construct "good" equivariant spectral triples on these noncommutative spaces.
	\end{enumerate}
\end{remark}

\section*{Acknowledgement}
We would like to thank Professor Prahlad Vaidyanathan
for  useful discussions on various topics.
Bipul Saurabh   acknowledges the support from  NBHM grant 02011/19/2021/NBHM(R.P)/R\&D II/8758.

\bigskip

\bigskip
\noindent{\sc Shreema  Subhash Bhatt} (\texttt{shreemab@iitgn.ac.in})\\
{\footnotesize Indian Institute of Technology, Gandhinagar,\\  Palaj, Gandhinagar 382055, India}
\bigskip

\noindent{\sc Bipul Saurabh} (\texttt{bipul.saurabh@iitgn.ac.in},  \texttt{saurabhbipul2@gmail.com})\\
{\footnotesize Department of Mathematics,\\ Indian Institute of Technology, Gandhinagar,\\  Palaj, Gandhinagar 382055, India}

\end{document}